\newcommand{\sigline}[1]{\makebox[\widthof{#1~}]{.\dotfill}\\#1}
\newcommand\N{\mathbb N}
\newcommand\R{\mathbb R}
\newcommand\C{\mathbb C}
\newcommand\E{\mathbb E}
\newcommand\I{\mathcal I}
\newcommand\Sn{\mathbb S^{n-1}}
\newcommand\sca[1]{\langle #1 \rangle}
\newcommand{\alist}[2][n]{{#2}_1,\ldots,{#2}_{#1}}
\newcommand{\aalist}{{a}_1,\ldots,{a}_{m}}
\newcommand{\mlspace}{\text{Hom}(\R^n,\ldots,\R^n; \R)}
\newcommand{\symlspace}{\text{Sym}(\R^n,\ldots,\R^n; \R)}
\newcommand{\lspace}{\text{Hom}(\R^n; \R)}
\newcommand{\polyspace}[1]{\R[X]_{\leq #1}}
\newcommand{\hompolyspace}[1]{\R[X]_{= #1}}
\newcommand{\sos}[1]{\sum \R[X]_{\leq \frac{#1}{2}}^2}
\newcommand{\moment}{\mathcal{M}}
\newcommand{\tensdecomp}[2][d] {\sum_{i=1}^m \sca{{#2}_i, X}^{#1}}
\newcommand{\weightdecomp}[1][d] {\sum_{i=1}^m \lambda_i \sca{{a}_i, X}^{#1}}
\newcommand{\rezprod}[2] {\langle #1 \mid #2 \rangle_F}
\newcommand{\linfunc}[1] {\phi_{#1}}
\newcommand{\linred}[1] {\phi_{#1 \tens \id}}
\newcommand{\transp}[1] {#1^T}
\newcommand{\supnormI}[1]{\|#1\|_{\infty, \I}}
\newcommand\chebystandard{\mathcal T_{d}}
\newcommand\chebyinterval[1][d]{\mathcal{T}_{\I, #1}}
\newcommand\Chebtest[1][d]{\mathcal C_{#1}}
\newcommand\wtest{W_{\text{test}}}
\newcommand{\Wst} {W^\ast}
\newcommand{\wminmax} {\mathfrak{w}}
\newcommand{\wminmaxtest} {\wminmax}
\newcommand{\speccorrel} {\rho_{\text{spec}}}
\newcommand{\minspeccorrel} {\rho_{\text{minspec}}}
\newcommand{\bestratio} {\mathfrak{r}}
\newcommand{\spec}[1]{\|#1\|_{\specn}}
\newcommand{\Mspec}{\|M\|_{\specn}}
\newcommand{\Mdiffspec}{\|M-a_j\transp{a_j}\|_{\specn}}
\newcommand{\kmin}{\kappa_{\min}}
\newcommand{\kmax}{\kappa_{\max}}
\newcommand{\wlistdec} {\lambda_1 a_1, \ldots , \lambda_m a_m}
\newcommand{\landauO}[1]{\mathcal{O}(#1)}
\newcommand{\landauOmeg}[1]{\Omega(#1)}
\newcommand{\landaueq}[1]{\theta(#1)}
\newcommand{\nummonoms}[1]{\binom{n+#1}{#1}}
\newcommand{\numhommonoms}[1]{\binom{n+#1-1}{#1}}
\newcommand{\monoms}[1]{\mathfrak{m}_{\leq #1}}
\newcommand{\monX}[1]{X^{\small \leq #1}}
\newcommand{\case}[4]{\begin{Bmatrix} #1, & \text{#2} \\#3, & \text{#4} \end{Bmatrix}}
\newcommand{\lfp}[1]{\sca{{#1}, X}}
\newcommand{\lfv}[1]{\sca{a_{#1}, v}}
\newcommand{\dvobfun}{\Delta_{j,i,f}}
\newcommand{\obfun}{f(a_i)}
\renewcommand{\norm}[2][]{\|#2\|_{#1} }
\newcommand{\spheredist}[1]{d_{\Sn}(#1)}
\newcommand{\projdist}[1]{d_{\mathbb{P}^{n-1}}(#1)}
\renewcommand\P{\mathcal P}
\newcommand\al{\alpha}
\newcommand\ph{\varphi}
\newcommand\tens{\otimes}
\theoremstyle{plain}
\newtheorem{sat}{Theorem}[section]
\newtheorem{lem}[sat]{Lemma}
\newtheorem{pro}[sat]{Proposition}
\newtheorem{defi}[sat]{Definition}
\newtheorem{notation}[sat]{Notation}
\newtheorem{remark}[sat]{Remark}
\newtheorem{kor}[sat]{Corollary}
\newtheorem{defprop}[sat]{Definition and Proposition}
\DeclareMathOperator\sgn{sgn}
\DeclareMathOperator\argmax{argmax}
\DeclareMathOperator\id{id}
\DeclareMathOperator\prob{\mathbb{P}}
\DeclareMathOperator\hausdist{d_H}
\DeclareMathOperator\im{im}
\DeclareMathOperator\specn{spec}
\DeclareMathOperator{\maxi}{maximise}
\DeclareMathOperator{\conv}{conv}
\begin{document}

\title{A new Algorithm for Overcomplete Tensor Decomposition based on Sums-of-Squares Optimisation}
\author{\Large Alexander Taveira Blomenhofer\vspace{9pt}\\ \large Master's Thesis in Real Geometry and Algebra \vspace{4pt} \\ \Large Universität Konstanz \vspace{4pt}\\ \large Supervised by: Prof. Dr. Markus Schweighofer\vspace{2pt}\\
\normalsize Submitted: October 5, 2018\\ \normalsize Last Revised: December 13, 2018\vspace{0pt} }
\date{}
\maketitle
\textbf{\centering Abstract\\}
\begingroup
\leftskip3em
\rightskip \leftskip
{Every  symmetric tensor $T$ of degree $d$ may be represented as a linear combination  $T = \sum_{i=1}^m \lambda_i\: a_i\tens\ldots\tens a_i$ of $d $-th tensor powers of vectors $a_i\in\R^n$.  The  task of finding such $a_i $ (when $T$ is given) is called the \emph{tensor decomposition problem}. Tensor decomposition has a broad range of applications:  Symmetric tensors occur  naturally e.g. as moment  tensors of probability  measures  and tensor decomposition techniques can be used to find quadrature rules for them.  However,  tensor decomposition is also a computationally demanding task,  particularly in the so-called \emph{overcomplete setting},  where $ m  > n $.  The approximation algorithms achieving the best known guarantees in this setting are based on  the sums of squares (SOS) programming  hierarchy, using the fact that symmetric tensors correspond to homogeneous polynomials, i.e. $\sum_{i=1}^m \lambda_i\: a_i\tens\ldots\tens a_i \: \longleftrightarrow \: \sum_{i=1}^m \lambda_i \lfp{a_i}^d$. 
	
In this work, a new class of algorithms based on SOS programming is developed. These allow to reduce a degree-$d$ homogeneous polynomial $T = \tensdecomp[d]{a}$ to (something close to) a rank-$1$ quadratic form via a reduction  polynomial $W\in\sum \R[X]^2$. $W$ can be thought of as a ``weight function'' attaining high values on merely one  of the components $a_i$. The component can then be extracted by running an eigenvalue decomposition on the quadratic form $\sum_{i=1}^m W(a_i) \lfp{a_i}^2$.}

\par
\endgroup
\newpage

\tableofcontents
\newpage

\section{Introduction}

\paragraph{What is tensor decomposition?}

\emph{Tensor decomposition} is the problem of finding a preimage\footnote{The $a_i$ can only be recovered up to reordering and double occurences  ~--~ this is suggested by the use of set notation. It is further reasonable to assume generally that no two $a_i$ are multiples of each other. Note that if $d$ is even, then we must further accept that we will not of course be able to distinguish between $a_i$ and $-a_i$. Even taking these effects aside, there will be many preimages in general.} in the assignment
\begin{align}\label{eq:tensdec}
	\{a_1, \ldots, a_m\} \mapsto \tensdecomp[d]{a}
\end{align}
where $a_i \in\R^n$ are (distinct) vectors not being multiples of each other and the image $T = \tensdecomp[d]{a}\in \hompolyspace{d} = \R[\alist[n]{X}]_{=d}$ is a homogeneous polynomial in the variable vector $X = \left( \alist[n]{X} \right)$. The polynomial $T$ is called a \emph{tensor}\footnote{The usual approach from category theory defines tensors as members of an abstract space $\underbrace{\R^n \tens \ldots \tens \R^n}_{d \text{ times}}$ fulfilling a universal property. It can be shown that the space of degree-$d$ homogeneous polynomials in \emph{noncommuting} variables is indeed a model of this property. However, in this whole thesis we will always work with symmetric $d$-tensors. It can be shown that these correspond precisely to the homogeneous polynomials in \emph{commuting} variables, i.e. the space $\hompolyspace{d}$} and the smallest such $m$ is called the \emph{rank} of $T$.  $\sca{\cdot, \cdot}$ denotes the scalar product of two real vectors.

To get a better intuition for the problem, think of a probability measure $\mu =\frac{1}{m} \sum_{i = 1}^m \delta_{a_i}$ which is finitely supported on the (distinct) \emph{nodes} $a_i \in \R^n$. Suppose we do not know the 
$a_i$ and the only thing we are given are samples from a vector $Y\sim \mu$ being a \mbox{$\mu$-distributed} random variable. Can we find out where $\mu$ is supported ~--~ i.e. ~--~ can we compute the $a_i$ from that information? \\  
Note that the degree $k$-moments $\E_\mu[Y^\al]$ of $Y$ (where $\al \in\N_0^n$ is a multi-index of length $|\al| = k$) essentially form a tensor for all $k\in \{0, \ldots, d\}$.
\begin{align}
	\sum_{|\al| = k} \E_{\mu}[Y^\al] X^\al
\end{align}
It can be shown that the following identity holds:
\begin{align}
\sum_{|\al| = k} \E_{\mu}[Y^\al] X^\al = \frac{1}{m} \tensdecomp[k]{a}
\end{align}
by expanding the right hand side and explicitly computing the moments on the left hand side.

Now the key point is the following: If we can draw sufficiently many samples from $Y$, then we may assume that we know the left hand side (by the central limit theorems of probability theory) ~--~ up to a small noise term. If we can then run a tensor decomposition algorithm on the left hand side, we will indeed get the support nodes approximately out of $\mu$. 

At first sight, tensor decomposition might seem like an impossible task, especially to people used to \emph{matrix decompositions}. This is due to the fact that if a quadratic form $Q$ (corresponding to a symmetric matrix $M \in \R^{n\times n} \cong \R^n \tens \R^n$) admits a decomposition $Q = \sum_{i = 1}^m \lfp{a_i}^2$ (corresponding to $M = \sum_{i = 1}^m a_i\transp{a_i}$), then for most of the time this decomposition will not be unique. The reason for this is that we are able to perform an orthogonal transformation: Write $A = (\alist[m]{a}) \in \R^{n\times m}$. Note that $M = A\transp{A}$. 
Then for any orthogonal matrix $R$,
\[
M = \sum_{i = 1}^m a_i\transp{a_i} = A\transp{A} = \left(AR\right) \transp{\left(AR\right)}
\]
Hence the columns of $AR$ will, too, form a matrix decomposition of $M$. If $m, n > 1$ and the $a_i$ are generic vectors, then the columns of $AR$ will always form  another decomposition, unless $R$ was a permutation matrix (in that case only the order of the $a_i$ would change). Thus in general 
the operation $A\mapsto AR$ will yield a decomposition distinct from $\{\aalist\}$.\\

However, there is an obvious reason why unicity fails: The second-order moments are just not enough data. If only these are known, then $\mu$ could as well be, for instance, the Gaussian measure $\mu \sim \mathcal{N}(\frac{1}{m} \sum_{i = 1}^m a_i, \frac{1}{m}M)$ with mean $\frac{1}{m} \sum_{i = 1}^m a_i$ and psd covariance matrix $\frac{1}{m} M$.\footnote{Usually, covariance matrices are required to be positive definite. Yet there is a reasonable extension of the notion of Gaussians for the psd case.}

This points us to the question: Can more data help restoring unicity? There is a natural argument suggesting this: $\mu$ is uniquely determined by the values of \[
\E_{\mu}[f]
\]
for all measurable and integrable functions $f$ on $\R^n$ (since $\mu$ is finitely supported, we can regard all functions $\R^n \to \R$ as measurable and integrable). Actually, we can replace ``all functions $f$'' by 
\[
\E_{\mu}[p]
\]
for all polynomials $p\in \R[X]$. Indeed this is true, since $\mu$ is supported on a compact set, we may approximate any function on $K := \{\alist[m]{a}\}$ by polynomials. Due to polynomial interpolation, we may even restrict to polynomials $p\in \polyspace{d}$ of degree \emph{at most} $d = 2m$: For any $x\in\R^n$, the indicator function $\mathbb{1}_ {x}$ coincides with some nonnegative polynomial $f_x$ on $K\cup \{x\}$, where $f_x$ can be chosen of degree less or equal $d$.\footnote{$2m$ is the degree of the nonnegative multivariate interpolation polynomial $\mathcal{I}_{x} := \prod_{i = 1}^m \frac{\norm{X-a_i}^2 }{\norm{x-a_i}^2 }$  satisfying $\mathcal{I}_{x}(y) = \mathbb{1}_{x}(y) $ for $y\in K\cup \{x\}$. 
We may thus choose $f_x := \mathcal{I}_{x}$.}
At this point, $\mu$ is uniquely determined among all finitely supported measures on $\R^n$ by the values $\E_\mu[f]$ for all polynomials $f$ with $\deg(f) \leq d$: \\
Indeed, if $\nu$ was another finitely supported measure such that $\E_\mu[f] = \E_\nu[f]$ for all polynomials $f$ with $\deg(f) \leq d$, then 
\[
\mu (\{x\}) =\E_\mu[f_x]  = \E_\nu[f_x] \overset{f_x \geq \mathbb{1}_ {x}}{\geq }\E_\nu[\mathbb{1}_ {x}] = \nu (\{x\})  \text{ for } x\in \R^n
\]
Hence $\nu$ is supported on $K$, too (which implies $\E_\nu[\sum_{i = 1}^{m}\mathbb{1}_ {a_i}] = \E_\nu[1]$), and the weight $\nu$ puts on $a_i$ is less than $\mu(\{a_i\})$. Since $\sum_{i = 1}^{m}\nu(\{a_i\}) = \E_\nu[1] = \E_\mu[1] = \sum_{i = 1}^{m}\mu(\{a_i\}) $, this already implies $\mu (\{a_i\}) = \nu (\{a_i\}) $. Thus $\mu = \nu$.\\
Now $\polyspace{d}$ is finite-dimensional, whence we can further reduce to knowing the moments\[
\E_{\mu}[X^\al]
\]
for any multi-index $\al$ of length $|\al| \leq d$. Grouping all these moments together, we see that at this point, $\mu$ is uniquely determined among all measures on $\R^n$ by the degree-$d$ polynomial\[
\mathcal{M}_\mu = \sum_{|\al| \leq d} \E_{\mu}[X^\al] X^\al
\]
Our problem then translates to finding the vectors $\alist[m]{a}$ such that 
\begin{align*}
	\mathcal{M}_\mu = \sum_{k = 0}^d \sum_{i = 1}^m \lfp{a_i}^k
\end{align*}
which can again be shown by expanding the right hand side using the multinomial theorem. To be fair, this is not a tensor decomposition problem as stated in \eqref{eq:tensdec}, but it may be seen as a ``dehomogenised variant'' of \eqref{eq:tensdec}.  From the above considerations, we know that  we can \emph{simultaneously decompose} all of those \emph{moment tensors} $\moment_k = \tensdecomp[k]{a}$ by the same $a_i$ and that this simultaneous decomposition is unique. We call such a problem a \emph{moment decomposition problem}. Moment decomposition and tensor decomposition are closely related problems. 

One might argue that we cheated to get  uniqueness by requiring multiple moments,  and that the issue would look differently if we started with just one tensor. However,  as we will see in \secref{sec:valgotd}, this is essentially not the case:  One  such tensor  of degree $d \geq 2 m$ suffices to  recover $\pm a_i$.  The reason for this is that,  given a  sole tensor  of high degree,  we can generate lower degree tensors $\sum_{i=1}^m \sca{a_i,v}^{d-k} \lfp{a_i}^k$ (where $v\in \Sn$ is some random vector) that  we can use  to cast a tensor decomposition problem into a moment decomposition problem.  These lower degree tensors can  thus be seen as   some sort of ``fake moments''.\footnote{The terminology follows \cite{BS16}, where the authors considered ``fake moments'' of higher degree.}

\paragraph{Undercomplete tensors}
There is another case where it is very easy to see that uniqueness holds: 

\begin{pro}
	Suppose $T = \sum_{i = 1}^m \lfp{a_i}^3$ is a $3$-tensor with $m\leq n$ orthogonal\footnote{This assumption can be relaxed to linear independence of the $a_i$} components $a_i\in \R^n$. Then $\alist[m]{a}$ is the only tensor decomposition of $T$ with $m$ or less components and we can compute it by Jennrich's algorithm (\cite{Har70}, see algorithm \ref{alg:Jennrich}), which is a classical result of tensor decomposition.\footnote{The algorithm was attributed to R. Jennrich in R. Harshman's publication cited above. It seems that R. Jennrich contributed the proof of uniqueness and the basic form of the algorithm, but did not make an own publication as of our knowledge.}
\end{pro}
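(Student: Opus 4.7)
The plan is to describe Jennrich's algorithm and derive both its correctness and the uniqueness statement from partial contractions of $T$. The key construction is to associate to $T$ a family of symmetric matrices $M_u \in \R^{n\times n}$, indexed by $u \in \R^n$: the directional derivative $\partial_u T \in \R[X]_{=2}$ is a quadratic form in $X$, which corresponds canonically to a symmetric matrix $M_u$ whose entries can be read off directly from the coefficients of $T$. Differentiating the given decomposition in direction $u$ yields
\[
\partial_u T \ = \ 3 \sum_{i=1}^m \sca{a_i,u}\, \sca{a_i,X}^2, \qquad \text{i.e.,} \qquad M_u \ = \ 3\sum_{i=1}^m \sca{a_i,u}\, a_i \transp{a_i} \ = \ 3\, A D_u \transp{A},
\]
where $A \in \R^{n\times m}$ has columns $a_1,\ldots,a_m$ and $D_u = \mathrm{diag}(\sca{a_1,u}, \ldots, \sca{a_m,u})$.

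Under the orthogonality assumption, the rescaled vectors $\hat a_i := a_i / \norm{a_i}$ form an orthonormal system and are simultaneous eigenvectors of every $M_u$: we have $M_u \hat a_i = 3 \norm{a_i}^2 \sca{a_i,u}\, \hat a_i$. The algorithm then picks a generic $u \in \Sn$, computes $M_u$ from $T$, and diagonalizes it. For generic $u$ the $m$ nonzero eigenvalues $3 \norm{a_i}^2 \sca{a_i,u}$ are pairwise distinct, so the corresponding eigenspaces are one-dimensional and spanned by the $\hat a_i$. The magnitudes $\norm{a_i}$ are then recovered by evaluating $T(\hat a_i) = \norm{a_i}^3$, in which all cross terms vanish by orthogonality.

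For the uniqueness statement I would use two contractions simultaneously. Let $T = \sum_{j=1}^k \sca{b_j,X}^3$ be any alternative decomposition with $k \leq m$ pairwise non-collinear $b_j$. Comparing the ranks of $M_u$ for generic $u$ across the two decompositions forces $k = m$ and the $b_j$ to be linearly independent, hence $V := \vspan\{a_1,\ldots,a_m\} = \vspan\{b_1,\ldots,b_m\}$. On $V$, both $M_u|_V$ and $M_v|_V$ are invertible for generic $u,v$, and a short computation shows that the composition $M_u (M_v|_V)^{-1}$ has both $\{a_i\}$ and $\{b_j\}$ as eigenbases, with respective eigenvalues $\sca{a_i,u}/\sca{a_i,v}$ and $\sca{b_j,u}/\sca{b_j,v}$. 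For generic $u,v$ these eigenvalues are pairwise distinct, so the eigenspaces are one-dimensional, forcing $b_j = c_j a_{\si(j)}$ for some permutation $\si$ and scalars $c_j \in \R$. Matching cubes in $T = \sum_j c_j^3 \sca{a_{\si(j)},X}^3$ then gives $c_j^3 = 1$, i.e.\ $c_j = 1$. The main subtlety I expect is the genericity argument: one must exclude the finite union of hyperplanes on which some $\sca{a_i,u}$ vanishes or two ratios $\sca{a_i,u}/\sca{a_i,v}$ coincide, which is a null set.
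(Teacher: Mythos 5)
Your proof is correct and, for the correctness of the algorithm, follows essentially the same route as the paper: contract $T$ along a random direction to obtain a symmetric matrix (you phrase this as $\partial_u T$, the paper as the polarised form $\tilde{T}\sca{X,Y,v}$, which agree up to the factor $3$), observe that the orthogonal $a_i$ are eigenvectors, and invoke genericity of $v$ (resp.\ $u$) to get distinct eigenvalues and hence one-dimensional eigenspaces. Your way of recovering the magnitudes (evaluating $T(\hat a_i)=\|a_i\|^3$) differs from the paper's formula $c_i=\sqrt[3]{\mu_i/\sca{u_i,v}}u_i$, but both are fine and both silently absorb the $\pm 1$ sign ambiguity of the eigenvectors through the odd cube root.

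Where you genuinely diverge — and improve on the paper — is the uniqueness part. The paper disposes of uniqueness in one sentence (``the resulting $c_i$ are identical to the $a_i$ no matter what representation we chose''), which on its face only establishes that two \emph{orthogonal} decompositions must coincide, since the argument that the components are eigenvectors of $M$ relies on orthogonality. Your argument via the two slices $M_u$ and $M_v$, showing that $M_u\,(M_v|_V)^{-1}$ is simultaneously diagonalised by the $a_i$ and by any alternative $b_j$, is the classical Harshman--Jennrich uniqueness proof, and it covers \emph{arbitrary} decompositions with at most $m$ non-collinear components, which is what the proposition actually claims. Your rank comparison correctly forces $k=m$ and linear independence of the $b_j$ before you can form the restricted inverse. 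The only thing you elide is the final matching step: $\sum_j c_j^3\lfp{a_{\sigma(j)}}^3=\sum_i\lfp{a_i}^3$ gives $c_j^3=1$ because the cubes $\lfp{a_i}^3$ of linearly independent linear forms are themselves linearly independent (change coordinates so $\lfp{a_i}=X_i$); worth a sentence, but not a gap. In short: same algorithm, but a more complete and self-contained uniqueness argument than the paper offers.
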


\begin{algorithm}[h]
	\caption{Jennrich's Algorithm, \cite{Har70}} 
	\label{alg:Jennrich}
	\textbf{Input: } A tensor $T \in \hompolyspace{3}$.\\
	\textbf{Require: } There should exist a tensor decomposition $T = \sum_{i = 1}^m \lfp{a_i}^3$ with pairwise orthogonal components, i.e. $\sca{a_i, a_j} = 0$ for $i\neq j$ and all $a_i \neq 0$. \\
	\textbf{Output: } Pairwise orthogonal vectors $\alist[m]{c}$ satisfying $T = \sum_{i = 1}^m \lfp{c_i}^3$. 
	\\ \vspace{-\baselineskip}
	\begin{algorithmic}[1]
		\STATE{$\mathbf{Compute}$ the symmetric $3$-linear form \[
			\tilde{T}\sca{X, Y, Z} = \sum_{i = 1}^m \sca{a_i, X}\sca{a_i, Y}\sca{a_i, Z}
			\]
			out of $T$ by using polarisation identities. Here $X,Y,Z$ denote vectors of independent unknowns.}
		\STATE{$\mathbf{Choose}$ $v \in \Sn$ uniformly at random.}
		\STATE{$\mathbf{Compute}$ the bilinear form (i.e. matrix) \[
			M := \tilde{T}\sca{X, Y, v} = \sum_{i = 1}^m \sca{a_i, v} \sca{a_i, X}\sca{a_i, Y} = \transp{X} \left(\sum_{i = 1}^m \sca{a_i, v} a_i\transp{a_i} \right)Y
			\]
			by plugging in $v$ for $Z$.} \label{alg:Jennrich:lM}
		\STATE{$\mathbf{Compute}$ the $m$ unit-length eigenvectors $\alist[m]{u} \in \Sn$ corresponding to the nonzero eigenvalues $\alist[m]{\mu}$ of $M$}\label{alg:Jennrich:leigendec}
		\STATE{$\mathbf{Set }$ $c_i := \sqrt[3]{\frac{\mu_i}{\sca{u_i, v}}} u_i$ (where $\sqrt[3]{\:\cdot\:}$ denotes the unique \emph{real} third root). Each of these will be one of the $a_j$. } \label{alg:Jennrich:lcvecs}
		\STATE{$\mathbf{Output }$ $\alist[m]{c}$}
	\end{algorithmic}
\end{algorithm}

\begin{proof}
Let us quickly review why this algorithm will produce correct results. 
Note that with probability $1$ the matrix $M$ in line \ref{alg:Jennrich:lM} will have $m$ distinct eigenvalues. Indeed, due to orthogonality the $a_i$ are eigenvectors of $M$ corresponding to eigenvalues $\mu_i := \sca{a_i, v}\|a_i\|^2$. Since $v$ was uniformly random, hitting the lower-dimensional set $\{v\in \Sn \mid  \sca{a_i, v}\|a_i\|^2 = \sca{a_j, v}\|a_j\|^2 \}$ has probability zero over the choice of $v$ for every two $i \neq j$ (note that the $a_i$ are nonzero). Hence every eigenspace of $M$ is one-dimensional and contains thus precisely two eigenvectors $\pm u$ of unit length. Since $a_i$ was an eigenvector of $M$, we get $u = \pm \frac{a_i}{\|a_i\|}$.\\
This shows that up to reordering, the eigendecomposition computed in line \ref{alg:Jennrich:leigendec} satisfies $u_i = \pm \frac{a_i}{\|a_i\|}$ and $\mu_i = \sca{a_i, v}\|a_i\|^2$. Hence the vector $c_i$ computed in line \ref{alg:Jennrich:lcvecs} satisfies \[
c_i  =  \sqrt[3]{\frac{\mu_i}{\sca{u_i, v}}}u_i = \sqrt[3]{\frac{\sca{a_i, v}\|a_i\|^2}{\sca{\frac{\pm a_i}{\|a_i\|}, v}}}u_i = \sqrt[3]{\pm\|a_i\|^3}u_i = \pm \|a_i\|u_i = a_i
\]
Note that the unknown factor $\pm 1$ of $u_i$ cancels with the one present in $\sca{u_i,v}$ below the root.
Of course the eigenvalue decomposition could yield the eigenvectors in a different order. But this does not matter, since the claim $\:T = \sum_{i = 1}^m \lfp{c_i}^3\:$ we had on the output $\alist[m]{c}$ does not depend at all on the order of the $c_i$. Uniqueness follows from the fact that the resulting $c_i$ are identical to the $a_i$ no matter what representation we chose.
\end{proof}

Note that such a procedure wouldn't be possible if we only had access to the second order input, i.e. $\tensdecomp[2]{a}$. We need that extra degree of freedom a $3$-tensor admits by allowing us to scale the eigenvalues of its matrix reductions.\footnote{The eigenvalues of a random symmetric matrix are distinct with probability $1$, but $m\leq n$ randomly chosen $a_i$ will almost-surely not be orthogonal. Jennrich's Algorithm can though be extended to work for at most $n$ \emph{linearly independent} components: If we get both the $2$nd order moment \emph{and} the degree-$3$ moment of the $a_i$ with, the requirement of orthogonality for uniqueness is, as we will see, without loss of generality, contrary to the matrix case.} Another issue here is the requirement of orthogonality, which allows $T$ to be just of very low rank, i.e. $m\leq n$. In the applications, though, the case $m > n$ is common. Due to the qualitative impact Jennrich's Algorithm has, it is common practice to distinguish the following cases of tensor decomposition:

\begin{enumerate} 
	\item{The \emph{undercomplete} case:} Here $m\leq n$ and thus we can hope for the $a_i$ to be linearly independent (if, for instance, the $a_i$ are  generic vectors, then we'd expect them to be). If we get access to the second-order moment $\moment_2 = \sum_{i = 1}^m a_i\transp{a_i}$ as well (this is (approximately) the case e.g. if our tensor stems from a probability distribution from which we can take samples) we may even
	reduce to the orthogonal case: For simplicity, assume $m = n$.\footnote{The case $m  <  n$ can be dealt with by learning a projection to a  lower-dimensional space (for example out of the  eigendecomposition of $\moment_2$) before employing this argument.} Then the matrix  $\moment_2$ will be positive definite and the $a_i$ will be orthogonal w.r.t. the scalar product defined by \[
	\sca{x, y}_{\moment_2} := \transp{x}\moment_2^{-1}y
	\] 
	Indeed, note that $A = (\alist[m]{a})$ factorises $\moment_2$, that is, $A\transp{A} = \moment_2$. Thus
	$\moment_2^{-1} = \transp{(A^{-1})}A^{-1}$ and $a_i = Ae_i$ satisfy \[
	\sca{Ae_i,Ae_j}_{\moment_2} = \transp{\left(Ae_i\right)}\transp{(A^{-1})}A^{-1} \left(Ae_j\right) = \sca{e_i, e_j}
	\]
	\item{The \emph{overcomplete} case:} Here $n < m \leq \numhommonoms{d}$ holds\footnote{In the applications, the components typically correspond to features of a dataset. To ensure that these features are actually meaningful, it is therefore recommendable to search for a decomposition with a small number $m$ of components. From that point of view, it becomes clear that the case $m \geq \numhommonoms{d} = \dim(\hompolyspace{d})$ isn't of much interest: Then the $\lfp{a_i}^d$ will be linearly dependent in the space $\hompolyspace{d}$. By Caratheodory's theorem from convex geometry, the cone spanned by the forms $\lfp{a_i}^d$ would then also be spanned by $\dim(\hompolyspace{d})=\numhommonoms{d}$-many forms (Cor. 7.4.21 in \cite{RAG}). Thus it could be possible to change the size of some of the components or even make some of them vanish or reappear, which wouldn't fit well with the intuition of them being ``relevant features''. } and therefore the $a_i$ will always be linearly dependent, even if the $\lfp{a_i}^d$ are not. In particular, we can't obtain the $a_i$ as eigenvectors of the same matrix. Overcomplete tensor decomposition is usually the more interesting case, though also way more difficult.
\end{enumerate} 

\paragraph{Overcomplete tensors and the variance problem}

As of now, people are trying to find time-efficient, noise stable and easily implementable algorithms for the overcomplete case.  
Since we have seen that the solution will get unique once the input degree $d$ is high in relation to the number of components (e.g. $d\geq 2m$ will do), 
it is natural to conjecture that the overcomplete case will become easier if we have the possibility to ``increase'' the value of $d$ (while fixing $m$). This is possible e.g. in the \emph{empirical case} where we can estimate any moment from given samples. However, there is at least one problem with that:

Suppose $Y \sim \mu$ is a $\mu$-distributed random vector whose variances are bounded by some $\sigma^2\in \R_{> 0}$. To generate the moment $\E_\mu[Y^\al]$ for some multi-index $\al$ of length $d$ by averaging, we need a number of samples which is, in general, exponential in $d$. Indeed, by Chebyshev's law of large numbers, we have
\[
\prob_\mu\left[|\overline{Y^\al} - \E_\mu[Y^\al]| \geq \tau\right] \leq \frac{\sigma^{2d}}{N\tau^2}
\]
Here we denote by $\overline{Y^\al}$ the average over $N$ i.i.d. copies of the random variable $Y^\al$. If we want to get the moment $\E_\mu[Y^\al]$ up to noise of the magnitude of $\tau$ with at least $0.99$ certainty, then we need (by setting the right hand side equal to $0.01$) 
\[
N \geq \frac{100\sigma^{2d}}{\tau^2} \in \landauO{\frac{\sigma^{2d}}{\tau^2}}
\]
samples.\footnote{The claim of exponentiality follows from the fact that Chebyshev's bound is tight for some random variables with variance $\sigma > 1$.}
If the data is obtained e.g. by pricey physical measurements, generating higher-order moments may literally get expensive.

\paragraph{History of moment generation procedures} \label{sec:history}

The research of B. Barak, D. Steurer and J. Kelner \cite{BKS15} together with R. Ge and T. Ma's \cite{GM15} suggests a way around that. The authors of \cite{BKS15} proposed that, given a degree-$d$ tensor $T = \tensdecomp[d]{a}$, it might be possible to generate higher-order ``fake moments'' of the solution vectors $a_i$ by optimising over the cone of \emph{pseudo-expectations}.

A pseudo-expectation of degree $d$ is a linear functional $\E$ on the space $\polyspace{d}$ satisfying 
\begin{align*}
&(1) \quad \E[1] = 1\\
&(2) \quad \E[P^2] \geq 0 \text{ for all square polynomials }P^2\in \polyspace{d}
\end{align*}
This optimisation can be done by a powerful tool known as \emph{sums of squares (SOS) programming},  which we will discuss in \secref{sec:sos_opt}.
Now, if such a pseudo-expectation would have anything to do with $\mu = \sum_{i = 1}^m \delta_{a_i}$, then we could hope that \[
\E[X^{\tens k}] := \sum_{|\al| = k} \E[X^\al] X^\al \approx \tensdecomp[k]{a}
\]
and thus we could try to run a noise-stable tensor decomposition algorithm on $\E[X^{\tens k}]$. \\

In \cite{BKS15}, the authors demonstrated this technique: To get a $\landaueq{\varepsilon}$-approximate decomposition of a measure supported on $m$ points, they take a tensor $T$ of degree $d \in \landauOmeg{\frac{\log(\sigma) + \tau}{\varepsilon}}$  
(where $\tau$ is some noise parameter. Note that this requirement still assumes that we have a sufficiently high-order tensor $T$ to begin with ~--~ for arbitrary accuracy $\varepsilon$ we'd need to be able to generate actual moments of arbitrary high degree). From that, they compute a degree-$k$ pseudo-expectation $\E$ where $k \approx  \max(4d, \frac{12\log(m)}{\varepsilon})$. 
Then the authors applied a brute-force yet noise stable decomposition algorithm to get one of the $a_i$ (approximately) out of their pseudo-distribution $\E$. However, we encountered an issue with their decomposition algorithm (``sampling from pseudo-distributions'' ~--~ Lemma 5.1ff in \cite{BKS15}), which we will address in \secref{sec:rel_work}. An error in one of the key lemmas broke their decomposition algorithm for the general case.  
Fortunately, they gave  a second algorithm working particularly for the Dictionary Learning problem (``refined sampling from  pseudo-distributions'' – see §7 in \cite{BKS15}. It assumes that samples from a $\mu$-distributed vector $Y$ are given where the distribution should satisfy certain assumptions). 

Such an approach can, as the authors pointed out, only work if  we assume that we can generate the  higher order moments by a reasonable amount of samples. This is why  the authors assumed ``niceness conditions'' on the distribution which imply that moments of high order  $d $   are known up to  a noise constant $\tau $ independent of $d $. Recall that in the worst case $\tau $ would grow exponentially with  $d $. Still, this shows how much can be done with high order moments  and therefore  the question remained whether it would  still be possible to  efficiently generate such higher degree fake moments even if only a  3-tensor is given. 
In particular, consider the case where we are left with a slightly overcomplete degree-$3$ tensor $T$ of rank $m$ in between $n$ and $n^{1.5}$
\[
T = \tensdecomp[3]{a}
\]
Ge and Ma then showed that a quasi-polynomial time procedure can be realised for ``average'' degree-$3$ input tensors $T$, restricting to the case where  the components are chosen randomly from an $n$-dimensional hypercube, precisely  $a_i\in\{\pm \frac{1}{\sqrt{n}} \}^n$. 
Alas, to this end,  they used the sampling procedure of \cite{BKS15} as a key part of their algorithm without giving a proof on their own. Therefore, the only  publication we know of where  higher degree fake moments have  successfully be used to help with the decomposition of 3-tensors is \cite{HSS16},  where the tensor
\[
T^2 = \sum_{i,j= 1}^m \lfp{a_i}^3 \lfp{a_j}^3
\]
is reshaped  and reweighed in a sophisticated manner to generate a proxy for the moment of order 4. But this method does not even use pseudo-expectations anymore.

Despite these issues in the current meta of research,  the reader should have got an idea how  valuable high order moments are and  that sums of squares programming can be a valuable tool in generating them.  In this thesis, we will show that sums of squares programming can also be used to \emph{decompose} high order tensors directly.

This is interesting because it indicates that  in situations very similar to the setting of \cite{BKS15}, we can work directly  on the given actual moments instead of generating higher degree fake moments. This does not bridge the gap in \cite{BKS15} (since  to this end,  one would need to verify that all of the arguments used work  (at least qualitatively) for the fake moments  as well,  which is likely not the case in the current formulation),  but it essentially allows  to solve moment  decomposition problems with the components  lying on the unit sphere when sufficient data is given.

What's even more interesting is that if it would be possible to design two  compatible sums of squares  based procedures,  one for  the generation  part and  another one  for the decomposition part,   then this could  open a whole lot of possibilities. But we are not quite there yet (and we do not even know if or for which tensors this can possibly work) and this is an interesting subject for future research.

\paragraph{Acknowledgements} 
This thesis would not have been possible without the many valuable discussions and the support of my supervisor Markus Schweighofer. Also, I wish to thank Adam Kurpisz for telling me a lot about sums of squares programming  and my parents for ``funding my research''.

\paragraph{Disclaimer} This thesis was written just by myself. When the term ``we'' occurs,  it is either supposed to include the reader or a matter of habit.

\newpage
\section{Overview}

\subsection{Outline of this Thesis}\label{sec:overview}
We will start by introducing  the basic notions and notations of tensor decomposition in \secref{sec:momandtdecomp}. Section \secref{sec:tensor_notation} will cover the connection between homogeneous polynomials and symmetric tensors. In \secref{sec:sos_opt} we give a very brief and rudimentary introduction to sums of squares programming. \\

A common technique in tensor decomposition is to use linear shrinking maps that take a high order tensor and reduce  it to a lower order tensor. In our case,  we work a lot with ``matrix reductions'',  where the  initial tensor $T = \tensdecomp[d]{a}$ is reduced to a weighted quadratic form $\sum_{i=1}^m W(a_i) \lfp{a_i}^2$. In \secref{sec:rezstuff} we introduce several  such linear ``shrinking maps'' connected to polynomial evaluation that will allow us to write down the linear constraints of our sums of squares programmes.\\

Done with  the preliminaries,  we will  start \secref{sec:valgo} by giving an algorithm that can compute the exact solution of a moment decomposition problem in exponential time from the first $ d\geq 2 m $ moments. In \secref{sec:valgotd},  we will then show how this algorithm can be adapted to work for tensor decomposition by feeding the original algorithm with ``fake moments'' of lower degree.\\

The algorithms of Section  \secref{sec:valgo} aren't very efficient neither with respect to computation time nor with respect to the amount of data needed,  but they  illustrate  some of the main ideas and techniques that we are going to use in  the approximate setting of Section \secref{sec:effdecomp}: 
Here we investigate what  approximation results the same kind of algorithms can achieve when we restrict to polynomially sized SOS programmes.

\subsection{Results and Related Work} \label{sec:rel_work} Based on ideas and similar concepts present in \cite{BKS15}, we develop a new class of algorithms for tensor decomposition that can be seen as a generalisation of Jennrich's Algorithm to the case of overcomplete tensor decomposition. We show exact recovery guarantees in the case that the input tensor $T$ is of sufficiently high order ($d \geq 2m$ with $m$ being the number of components).  

These algorithms are in some sense ``matrix reduction algorithms'',  since they reduce high order tensors to quadratic forms   which correspond to symmetric matrices  and then recover the components by running eigenvalue decompositions on the reduced matrices. The broad concept of such matrix reductions  is very old –  note that Jennrich's classical result Alg. \ref{alg:Jennrich} from the introductory  section can actually be seen an example of such a matrix reduction algorithm -- at least in the broad sense:  In Jennrich's case we reduced the input tensor via the polynomial $W := \lfp{v}$ and then performed an eigenvalue decomposition.\\
The main conceptual novelty is in showing that it's possible to use sums of squares programming in order to find such  $W $ which attains high values only on one of the components $a_i $ (and acts thus as some sort of a weight function on the $a_i $),  which  yields a quadratic form being approximately of rank 1. The sums of squares condition is needed to ensure that the weights are nonnegative. It will turn out that this nonnegativity condition for the values $W(a_i)$ is essentially what enables us to find such $W$ via optimisation, since it allows to cap the maximum weight by a simple linear constraint, e.g. $\sum_{i=1}^mW(a_i) = 1$. This may sound quite different from Jennrich's Algorithm, since there we needed no such thing as an SOS constraint on the reduction polynomial. However, we will see  in \secref{sec:qualitatives} that Jennrich's classical Algorithm admits an equivalent SOS based formulation.

The authors of \cite{BKS15} were already using techniques which can,  in our terminology,  be described as  matrix reduction via SOS polynomials:  After computing some degree $k\geq d$ pseudo-distribution $\E $ satisfying certain constraints (in particular that $\E[\lfp{a_j}^k]$ is not too small for some $j\in[m] $),  they wanted to reduce $\E $ to a degree 2 pseudo-distribution (which corresponds to a psd matrix)  via conditioning with a sum of squares polynomial $W$.  The authors hoped to show that  with sufficiently high probability the polynomial\[
W_0 := \prod_{l = 1}^d \frac{1}{M} \sca{G^{(l)}, X}^2
\]
which is a product of  $d $ squares of independent Gaussian  linear forms $\sca{G^{(l)}, X}$ given by Gaussian vectors $G^{(l)} \sim  \mathcal{N}(0,I_n)$,  would satisfy 
\begin{align}
\E[W_0\lfp{a_j}^2 ] \geq (1-\landauO{\varepsilon}) \E[W_0] \label{eq:guessed_W}
\end{align}
where $\varepsilon$ is an (unknown) approximation constant depending on $d,m$  and the condition of the problem. This was Lemma 5.2 in \cite{BKS15}.\\

Unfortunately though,  there was an error in the proof of Lemma 5.2: In lines 2~-~3 on page 19,  they choose pairs $(\tau_M, M) \in\R^2$ of real numbers such that a standard (expected value 0 and standard deviation 1) Gaussian scalar variable $\xi$ satisfies \[
\E_{\xi\sim \mathcal{N}(0,1)}[\: \xi^2 \: \mathbb{1}_{\{\xi \geq \tau_M\}}\:] = M
\]
Of course this is feasible precisely for any $M\in (0,1)$.

On the same page,  in line 21,  they chose $M = (1/\varepsilon) \cdot \log(1/\varepsilon) \gg 1$ for some very small $\varepsilon >  0$.  Therefore $1 > M \gg 1$, rendering the choice of  $M $ infeasible. This essentially breaks the proof.  The statement also has a  surprising qualitative aspect,   since  we would expect  that the  choice of the scaling factor $M$  would not make any difference on the quality of the estimation \eqref{eq:guessed_W}.

Even more unfortunate is that Ge and Ma cited particularly this Lemma without repeating the proof in \cite{GM15}. Using Lemma 5.2 as  an integral part of their decomposition algorithm,  they designed the procedure which we described in the introductory chapter (\ref{sec:history}) and which achieves the currently best known recovery guarantees for average case tensors (in the sense that they achieve quasipolynomial time while allowing  $m $ to be quite large,  almost $m =  n^{1.5} $).

\newpage
\section{Preliminaries} \label{sec:prelims}

\begin{notation} \label{not:init_nots}
	Vectors of polynomial unknowns and random vectors will always be denoted by capital letters such as $X, Y$, whereas variables with numerical values such as $x,y\in \R^n$ will be denoted by lower case letters. In the case of polynomial unknowns, we implicitly understand that $X = \left( \alist[n]{X} \right)$, where the $\alist[n]{X}$ are algebraically independent (scalar) unknowns. A single scalar unknown (for univariate polynomials) will be denoted by $\Lambda$. We write $\N = \{1, 2, 3, \ldots\}$ and $[m] := \{1,\ldots, m\}$ for $m\in\N$. $e_i \in \R^n$ will always denote the $i$-th unit vector, such that $(e_i)_j = \case{1} {if i = j} {0} {otherwise} = \delta_{ij}$. $\spec{A} := \sup_{x\in \Sn} \sca{Ax,Ax}^{1/2}$  will denote the spectral norm of the matrix $A $. For $d\in \N_0$, let $\polyspace{d}$ denote the space of all polynomials in $X = (\alist[n]{X})$ up to degree $d$ and $\hompolyspace{d}$ the subspace of all homogeneous polynomials of degree precisely $d$. 
	For $\al\in\N_0^n$, we denote the multinomial coefficients \[
	\binom{d}{\al} := \frac{d!}{\al_1! \cdots \al_n!}
	\]
	occurring in the the important \emph{multinomial theorem}: For $a\in\R^n$ we have: \[
	\lfp{a}^d = \sum_{|\al| =  d} \binom{d}{\al} a^\al X^\al
	\] 
\end{notation}

\begin{notation} \label{not:coeff_convention}
	Each polynomial may be represented by its coefficients w.r.t. the canonical basis $(X^\al)_{|\al| \leq d}$, where $\al \in \N_0^n$ denotes a multi-index. We, however, use the convention to write a polynomial $P$ as \[
	P = \sum_{|\al| \leq d} \binom{|\al|}{\al} P_\al  X^\al.
	\]
	Hence, we represent a polynomial w.r.t. the basis consisting of all scaled monomials $\binom{|\al|}{\al} X^\al$. This convention is technical convenience ~--~  mainly for compatibility  with the multinomial theorem and polynomial evaluation. Details will become clear from \secref{sec:momandtdecomp}, \secref{sec:tensor_notation} and 
	\secref{sec:rezstuff}.
\end{notation}

\begin{remark}
	During this whole thesis, we will use the term ``tensor'' both to describe a multilinear map $(\alist[d]{v})\mapsto T\sca{\alist[d]{v}} \in \mlspace$ and a homogeneous polynomial $T = \sum_{|\al| =  d} \binom{d}{\al} T_\al X^\al \in \hompolyspace{d}$.  For most of the time, we will work with polynomials instead of multilinear maps. We will justify this and explain the connection in \secref{sec:tensor_notation}.
\end{remark}

\newpage
\subsection{Moment and Tensor Decompositions} \label{sec:momandtdecomp}

\begin{defi} (Tensor decomposition)\:
	Let $T\in\hompolyspace{d}$ for some $d\in\N$. $\alist[m]{a}$ is called an (unweighted symmetric) \emph{tensor decomposition} of $T$, if 
	\[
	T = \sum_{i = 1}^m \lfp{a_i}^{d} 
	\]
	The smallest $m\in \N \cup \{\infty\}$ for which such a tensor decomposition exists, is called the \emph{rank} of $T$ (where the rank is $\infty$ iff no tensor decomposition exists) and the $a_i$ are called \emph{components} of the decomposition. $\|a_i\|$ is called the \emph{magnitude} of $a_i$ in the decomposition.
\end{defi}

\begin{remark} 
	When talking about tensor decompositions, we will implicitly assume that the $a_i$ are all nonzero and no nonnegative multiples of each other. This makes sense for otherwise we could group $a_i$ and $\lambda a_i$ together to one summand $\sqrt[d]{(1+\lambda)}a_i$. If $d$ is odd, then we can extend this assumption to the $a_i$ being no real multiples of each other. Slightly imprecise, we will often call the $a_i$ \emph{components of $T$}. This practice is justified if there exists only one tensor decomposition (which is the usually the case for low-rank decompositions of high degree tensors). However, we will also use this practice when we merely hope that there could be uniqueness. Note that for even $d$ it's not possible to distinguish between components $a_i$ and $-a_i$.
\end{remark}

\begin{notation}
	Let $x\in \R^n$. Then by $\delta_x$ we denote the \emph{Dirac measure} of $x$, that is 
	\begin{align*}
	\delta_x : \P(\R^n) \to \R_{\geq 0},\:\: A \mapsto \case{1}{$x\in A$}{0}{otherwise}
	\end{align*}
	This is not to be confused with the \emph{Kronecker delta}: For $i,j\in\N$ we denote \[
	\delta_{ij} = \case{1}{if $i = j$}{0}{otherwise}
	\]
\end{notation}

\begin{defi}
	Let $\mu$ be a measure on some sub-sigma algebra $\mathcal{A} \subseteq \P(\R^n)$ such that $\E_\mu[P]$ exists for every homogeneous polynomial of degree $k$. The polynomial \[
	\E_\mu[X^{\tens k}] := \sum_{|\al| = k} \binom{k}{\al} \E_\mu[X^\al] X^\al 
	\]
	is called the $k$-th \emph{moment tensor} of $\mu$ (or, for brevity, the $k$-th \emph{moment}).\footnote{For technical reasons that will become apparent from the following proposition, our definition of moments is slightly different than in the introduction: The summands are now rescaled by some multinomial coefficients.}
\end{defi}

\begin{pro}\label{pro:asaresimwdec}
	Let $\mu = \sum_{i = 1}^m \lambda_i \delta_{a_i}$ be a measure finitely supported on the set $\{\alist[m]{a}\}$ with weights $\lambda_i > 0$. Then for any $k\in\N$ the $k$-th moment exists and satisfies \begin{align}
	\moment_k = \sum_{i = 1}^m \lambda_i \lfp{a_i}^k \label{eq:moment_quadrature}
	\end{align} 
\end{pro}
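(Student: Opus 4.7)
The plan is to compute both sides and show they are equal by a direct unpacking of the definitions. Since $\mu$ is finitely supported with finite total mass, every integral $\mathbb{E}_\mu[P] = \sum_{i=1}^m \lambda_i P(a_i)$ is actually a finite sum and therefore exists for any polynomial $P$; in particular the $k$-th moment tensor is well-defined. So the only content is the identity \eqref{eq:moment_quadrature}.

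First I would start from the definition
\[
\moment_k \;=\; \sum_{|\al|=k} \binom{k}{\al}\, \E_\mu[X^\al]\, X^\al
\]
and plug in $\E_\mu[X^\al] = \sum_{i=1}^m \lambda_i\, a_i^\al$, which is valid by finiteness of the support. Then by linearity (i.e.\ interchanging the two finite sums) this rearranges to
\[
\moment_k \;=\; \sum_{i=1}^m \lambda_i \sum_{|\al|=k} \binom{k}{\al}\, a_i^\al\, X^\al.
\]
At this point I would invoke the multinomial theorem, exactly in the form recalled in Notation~\ref{not:init_nots}, which gives $\sum_{|\al|=k}\binom{k}{\al} a_i^\al X^\al = \lfp{a_i}^k$ for every fixed $i$. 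Substituting yields the claim.

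There is no real obstacle here; the only thing one might want to remark on is why the scaling convention from Notation~\ref{not:coeff_convention} (inserting the multinomial coefficient $\binom{k}{\al}$ into the definition of $\moment_k$) is the ``right'' one, namely precisely so that the multinomial theorem produces the clean formula $\sum_i \lambda_i \lfp{a_i}^k$ without extraneous combinatorial factors. This is also why the definition of moment tensor given here differs from the one used in the informal introduction by those binomial weights: with this normalisation, the moment tensor of a finitely supported measure is literally a weighted tensor decomposition, which is the fundamental observation that motivates the rest of the thesis.
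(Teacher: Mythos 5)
Your proof is correct and takes essentially the same route as the paper's: compute $\E_\mu[X^\al] = \sum_i \lambda_i a_i^\al$, swap the finite sums, and apply the multinomial theorem. The only cosmetic difference is that you expand the left-hand side toward the right while the paper expands the right-hand side toward the left, which is the same computation read in the opposite direction.
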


\begin{proof}
	Note that\begin{align}
	\E_\mu[X^\al] =  \sum_{i = 1}^m \lambda_i \delta_{a_i}[X^\al] = \sum_{i = 1}^m \lambda_i a_i^\al \label{eq:qf_onemonom}
	\end{align}
	Furthermore, according to the multinomial theorem we know for each $a \in \{\aalist \}$ that \[
	\lfp{a}^k = \sum_{|\al| = k} \binom{k}{\al} a^\al X^\al
	\]
	By expanding the right hand side of \eqref{eq:moment_quadrature}, we get
	\begin{align}\label{eq:qf_calc}
	\sum_{i = 1}^m \lambda_i \lfp{a_i}^k = 	\sum_{i = 1}^m \lambda_i \sum_{|\al| = k} \binom{k}{\al} a_i^\al X^\al = \sum_{|\al| = k} \binom{k}{\al} \sum_{i = 1}^m \lambda_i a_i^\al X^\al = \sum_{|\al| = k} \binom{k}{\al} \E_\mu[X^\al]  X^\al 
	\end{align}
	where the last step used \eqref{eq:qf_onemonom}. The right hand side of equation \eqref{eq:qf_calc} is precisely $\moment_k$.
\end{proof}
Hence $\sqrt[k]{\lambda_1} a_1, \ldots, \sqrt[k]{\lambda_m} a_m$ is a tensor decomposition of $\moment_k$ for each $k\in \{0, \ldots, d\}$. This motivates the following definition.

\begin{defi} (Moment decomposition)\:
	Let $T_0,\ldots, T_d$ be given Tensors of orders $k_0 < \ldots < k_d$. We call $\alist[m]{\lambda}, \alist[m]{a}$ a 
	\emph{moment decomposition} of $T_0,\ldots, T_d$, if for $j\in\{0, \ldots, d\}$
	\[
	T_j = \sum_{i = 1}^m \lambda_i \lfp{a_i}^{k_j} 
	\]
	For simplicity, we will usually write $\wlistdec$ instead of $\alist[m]{\lambda}, \alist[m]{a}$. Note that this is slight abuse of notation, since the $a_i$ and $\lambda_i$ are interpreted as distinct variables. 
\end{defi}

Moment decompositions may be seen as simultaneous (weighted) decompositions of several tensors at once.  As the name suggests, the most important special  case is when the first $ d $ moment  tensors of a measure are given: 

\begin{pro} 
	Let $\mu = \sum_{i = 1}^m \lambda_i \delta_{a_i}$ be a measure finitely supported on the set $\{\alist[m]{a}\}$ with weights $\lambda_i > 0$. Then $\wlistdec$ is a moment decomposition of $\moment_0,\ldots, \moment_d$ for any $d$. 
\end{pro}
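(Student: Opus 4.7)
The plan is to observe that this proposition is essentially a direct corollary of the preceding Proposition \ref{pro:asaresimwdec}, merely repackaged in the language of moment decompositions.

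First I would match the data of the moment decomposition definition to the setup: set $k_j := j$ for $j \in \{0, \ldots, d\}$, so that $k_0 < k_1 < \ldots < k_d$, and set $T_j := \moment_j$. Since $\mu$ is finitely supported, every moment $\E_\mu[X^\al]$ exists (the integral is just a finite sum), so each $\moment_j$ is a well-defined homogeneous polynomial of degree $j$. Hence the tensors $T_0, \ldots, T_d$ are well-defined.

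Next, I would apply Proposition \ref{pro:asaresimwdec} to each $k \in \{0, \ldots, d\}$. That proposition yields
\[
\moment_k = \sum_{i=1}^m \lambda_i \lfp{a_i}^k
\]
for every such $k$. Substituting $k = k_j = j$ and $T_j = \moment_j$, this reads $T_j = \sum_{i=1}^m \lambda_i \lfp{a_i}^{k_j}$, which is precisely the defining condition of a moment decomposition. Thus $\wlistdec$ is a moment decomposition of $\moment_0, \ldots, \moment_d$.

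There is no real obstacle here; the content of the claim is already contained in Proposition \ref{pro:asaresimwdec}, and the only task is to check that the indexing conventions of the moment decomposition definition are satisfied. The proof would therefore be a one-line observation rather than a genuine argument.
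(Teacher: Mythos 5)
Your proposal is correct and takes the same route as the paper, whose entire proof is the one-line remark that the claim was already shown in Proposition \ref{pro:asaresimwdec}. You simply spell out the bookkeeping (matching $k_j = j$ and $T_j = \moment_j$) that the paper leaves implicit.
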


\begin{proof}
	This was shown already in Prop. \ref{pro:asaresimwdec}. 
\end{proof}

\begin{defi}
	Let $\mu$ a measure on some sub-sigma algebra $\mathcal{A} \subseteq \P(\R^n)$ such that its first $d$ moments $\moment_0,\ldots, \moment_d$ exist. Using terminology from the theory of numerical integration, a moment decomposition of $\moment_0,\ldots, \moment_d$ is called a \emph{quadrature formula} for $\mu$ up to degree $d$. This is due to the fact that for any such decomposition $\wlistdec$ we have \[
	\E_\mu[P] = \E_{\sum_{i = 1}^m \lambda_i \delta_{a_i}} [P] = \sum_{i = 1}^m \lambda_i P(a_i)
	\] 
	for all polynomials of degree up to  $d$.  The largest $ d \in\N\cup\{\infty\}$ for which  the actual integral with respect to $\mu $ can be replaced by  this quadrature rule is called the \emph{exactness degree} of the quadrature formula $\wlistdec$ for $\mu$.  
\end{defi}

If $\mu$ is a finitely supported measure\footnote{In that case we can always assume $\mathcal{A} = \P(\R^n)$ and suppress the sigma-algebra in the notation.},  then there exists a quadrature formula of infinite exactness degree and our objective is to find it. This case will be the focus of this thesis.

\begin{remark} \label{rem:input_ambiguity} An obvious difference between tensor decomposition and moment decomposition is that the first problem does not ask for weights. This is due to the fact that for $\lambda_i \in \R_{\geq 0}$, $a_i \in \R^n$ we have that  \[
	\lambda_i \sca{a_i, X}^d = \sca{\sqrt[d]{\lambda_i} a_i, X}^d
	\]
	Thus, there is an ambiguity in the input data: We will never know if the component we search for is $a_i$ and the corresponding weight is $\lambda_i$ or if the component is $\sqrt[d]{\lambda_i}a_i$ with weight $1$. From that it becomes clear that we need more information if we want to distinguish between these cases. One way is the above method to require multiple moments of our solution vectors. Another way would be to impose constraints on the $a_i$ that restrict where the $a_i$ can be located (such as $\|a_i\| = 1$).\footnote{For even $d$, we will see in \secref{sec:qualitatives} that this particular constraint is equivalent to knowing all even-degree moments of degree lower than $d$} 
\end{remark}

Let us define the corresponding notion of magnitude for quadrature formulae. 

\begin{defi}
	Let $\wlistdec$ a quadrature formula for the measure $\mu$. Then we call $\lambda_i a_i$ the $i$-th \emph{weighted component} and $\lambda_i \|a_i\|$ its \emph{magnitude}. 
\end{defi}

In some sense, moment decomposition seems to be the easier task,  since the additional information we have breaks for instance the input ambiguity described above. On the other hand,  tensor decomposition seems to be the more general task,  since it relies on less information. 
Indeed, once we solved the tensor decomposition problem for the moment tensor $\moment_d$ of even degree  $d $,  then we can search for each $ k$ for some weights $\lambda_{k,i,\pm} $ minimising \[
\norm[F]{\moment_k - \sum_{i = 1}^m \lambda_{k,i,+} \lfp{b_i}^k +  \lambda_{k,i,-} \lfp{-b_i}^k}^2
\] by solving a linear least squares problem.\footnote{Here, $\norm[F]{P}$  denotes the Frobenius norm of  $P$, see Def. \ref{def:frobnorm}. Note there is a problem when starting with some odd degree $d$: If the components $a_i $ and $ -a_i $ occur  both with the same weight in $\mu$, they cancel in the odd-degree tensors,  whence they are lost.}  It is not too hard to see that you can get the true weights  and the true component lengths by comparison of the  $\lambda_{k,i,\pm}$.

Surprisingly, it works also the other way around:  Once we have an algorithm that can solve the moment decomposition problem,  but are given only one tensor of degree $d $,  then  we can generate some fake moments of lower degree and feed them to the moment decomposition algorithm. We'll discuss methods to generate artificial lower degree moments, given a sole tensor $T$, in Section \secref{sec:valgotd}.

We will go with this approach and thus first design an algorithm for moment decomposition,  that we will later  adapt to work for tensor decomposition as well. But first, here are some more basic facts about tensor decompositions.

\begin{remark} 
	\begin{enumerate}[(a)]
		\item For even $d$, if $T\in\hompolyspace{d}$ admits a tensor decomposition, then this implies that $T\geq 0$ on all of $\R^n$.  
		\item There is no efficient way (i.e. no procedure running in polynomial time) to decide the rank of a general $3$-tensor (assuming $P\neq NP$). 
		\item In fact, many problems that admit efficient algorithms in the matrix case turn out to be NP-hard when generalised to the tensor case. In \cite{HL13}, C. Hillar and L.-H. Lim pointed out that ``Most tensor problems are NP-hard'' (which was also the title of their paper) 
	\end{enumerate}
\end{remark}

The results of \cite{HL13} do not tell us whether or not there could be  efficient approximation algorithms for (special cases of) the tensor decomposition problem.   To be able to analyse this, let us first define how we measure the quality of an approximate solution as well as all of the  related notions.

\begin{defi} (Hausdorff distance)
	The finite sets $M,N$ (w.l.o.g. with $\#M \geq \#N$) are called \emph{$\varepsilon$-close}, if there exists a surjective map $\sigma \:: M \to N$ such that \begin{align}
	\norm{a - \sigma(a)} < \varepsilon
	\end{align}
	for all $a\in M$. The smallest such $\varepsilon \in \R_{\geq 0}$ 
	is called the \emph{Hausdorff distance} of $M$ and $N$. We denote it as\[
	\hausdist(M,N)
	\] 
\end{defi}
For the special case that the sets $M = \{\aalist\}$, $N = \{\alist[m]{b}\}$ have the same cardinality $m = \#M = \#N$, $\varepsilon$-closeness is equivalent to
\begin{align}
\min_{\sigma \in S_n} \norm{a_{i} - b_{\sigma(i)}} < \varepsilon
\end{align}
That is, such sets are $\varepsilon$-close if we can match them to pairs $(a_i, b_{\sigma(i)})$ of ``neighbours'' such that $\norm{a_{i} - b_{\sigma(i)}} < \varepsilon$.

\begin{defi} (Frobenius Norm)  \label{def:frobnorm}
	Let $P = \sum_{|\al| \leq d} \binom{|\al|}{\al} P_\al X^\al \in\polyspace{d}$. Then 
	\begin{align*}
	\norm[F]{P} = \sqrt{\sum_{|\al| \leq d} \binom{|\al|}{\al} P_\al^2}
	\end{align*}
	is called the \emph{Frobenius norm} of $P$.  It  corresponds to the 2-norm  of $P $ when $P$ is seen as a vector of  its coefficients with respect to the basis  $(\sqrt{\binom{|\al|}{\al}} X^\al)_{|\al| \leq d}$. 
\end{defi}

\begin{defi} (Forward and Backward Error)  
	Let $T\in\hompolyspace{d}$ a tensor  and $\alist[k]{b}\in\R^n$ an approximate solution to the tensor decomposition problem for $T$. 
	Then we call
	\begin{align*}
	\norm[F]{T - \sum_{i = 1}^m \lfp{b_i}^d} 
	\end{align*}
	the \emph{backward  error} of the  approximate solution $\alist[k]{b}$.  Furthermore,  with respect to an exact solution $\aalist$ (such that $T = \tensdecomp[d]{a}$),  we define
	\begin{align*}
	\hausdist(\{\aalist\}, \{\alist[k]{b}\})
	\end{align*}
	as the \emph{forward  error} of $\alist[k]{b}$ at $\aalist$. The nomenclature is historical convention.  Note that the latter notion will usually depend on the choice of $\aalist$.  However,  if the  true decomposition happens to be unique,  then the forward error is a property of $\alist[k]{b}$ and $T$, too. 
\end{defi}
Our approximation algorithms in \secref{sec:effdecomp} will focus on minimising the  forward  error of the  approximate solution with respect to all exact solutions which works  of course best  if the solution is unique.

\paragraph{Condition}

A widespread concept in  numerics  is condition.  This notion captures the fact  that the quality of the results that an approximation algorithm produces can depend gravely on some properties of the input data / the exact solution. In fact,  condition can be seen as a notion of ``the exact solution's quality''. Specifically in  tensor decomposition,  problematic instances could be e.g.  such where some components sit very closely together.  To deal with this, we will briefly introduce some parameters that will naturally appear in our following computations.  Notice  that all of these notions are defined with respect to  a fixed set of solution vectors.  In the case where the decomposition is not unique,  we do not  know of a canonical definition  of these parameters with respect to just $T$. 
\begin{notation} \label{not:cond_params} Let $(\aalist)$ a list of $m$ distinct vectors in $\R^n$. We define:
	\begin{align}
	\kmin &:= \min_{\substack{i,j = 1,\ldots,m\\ i\neq j}} \|a_i - a_j\|^2 \\
	\kmax &:= \max_{\substack{i,j = 1,\ldots,m\\ i\neq j}} \|a_i - a_j\|^2 \\
	\speccorrel &:= \max_{\substack{i,j = 1,\ldots,m\\ i\neq j}}  \spec{a_i\transp{a_i} - a_j\transp{a_j}}	\\
	\rho_{\text{lin}} &:= \max_{\substack{i,j = 1,\ldots,m\\ i\neq j}}  1 - \frac{\sca{a_i, a_j}}{\norm{a_i}\norm{a_j}}	
	\end{align}
	We'll hide their dependency  of  $(\aalist)$ in the notation,  but they will always be defined with respect to  $(\aalist)$. 
\end{notation}
In the case that all the components have unit length, \[
2\speccorrel^2 =  1 - \frac{\sca{a_i, a_j}^2}{\norm{a_i}^2\norm{a_j}^2}	
\] is one minus the maximum squared correlation between two components,  which can be seen by explicitly computing the characteristic polynomial of a rank 2 matrix (the  characteristic polynomial of a rank 2 matrix can be computed by looking at its trace and at the sum of its $2\times 2$ principal minors).
These parameters all have in common that they measure how well-separated the components are: This can either be done by looking at the length of their differences $\|a_i - a_j\|^2$ or by looking at their correlation $1 - \frac{\sca{a_i, a_j}}{\norm{a_i}\norm{a_j}}	$ together with their length differences $\norm{a_j}^2 - \norm{a_i}^2$.  Both approaches are connected via the identity \[
\|a_i - a_j\|^2 = \norm{a_i}^2 + \norm{a_j}^2 - 2\sca{a_i, a_j}
\]
For convenience, we  defined all of these parameters since they will naturally appear in our estimations in \secref{sec:effdecomp}.  The correlation metric can be defined slightly more generally:

\begin{defprop} (Correlation Metric)  \label{def:corr_metric}
	Let $ x, y \in\R^n$. We define
	\[
	d_{\Sn}(x,y) := \left(1 - \frac{\sca{x,y}}{\norm{x}\norm{y}} \right)^{1/2}
	\]	
	as the \emph{correlation metric} which is a metric on the unit sphere, and
	\[
	d_{\mathbb{P}^{n-1}}(x,y) := \left(1 - \frac{\sca{x,y}^2}{\norm{x}^2\norm{y}^2} \right)^{1/2}
	\]
	as the \emph{square correlation metric} which is a metric on the unit sphere modulo $\pm  1$,  that is,  on the real projective space $\mathbb{P}_\R^{n-1}$.
\end{defprop}

\begin{proof}
	For $x,y \in\Sn$, note that $\norm{x-y}^2 = \norm{x}^2 + \norm{y}^2 - 2\sca{x,y} = 2(1-\sca{x,y}) = 2\cdot d_{\Sn}(x,y)^2$. This  shows that $d_{\Sn}$ is a metric and hence the first claim. For the second claim,  recall that for unit vectors we have \[
	1 - \sca{x,y}^2  = \frac{1}{2}\spec{x\transp{x} - y\transp{y}}^2
	\]
	showing the triangular inequality and whenever $1 - \sca{x,y}^2 = 0$ then by Cauchy-Schwarz $x = \pm y$.
\end{proof}

\newpage
\subsection{Tensor Notation and Operations} \label{sec:tensor_notation}

In the introductory chapter,  we defined tensors in a way that the reader might not be familiar with. In fact,  the much more common approach to tensors is to see them as multilinear maps. In this section,  we will  state this second,  more standard definition of tensors and show that both viewpoints are equivalent for symmetric tensors.

\begin{defi}
	An \emph{(order-$d$) tensor} is a multilinear map \[
	T\::\: \underbrace{\R^n \times \ldots \times \R^n}_{\text{$d$ times}} \to \R
	\] 
	We denote $T\sca{\alist[d]{v}}$ instead of $T(\alist[d]{v})$ to stress the linearity in each of the arguments surrounded by angular brackets. \\
	$T$ is called (totally) symmetric, if $T\sca{\alist[d]{v}} = T\sca{v_{\sigma(1)}, \ldots, v_{\sigma(d)}}$ for all permutations $\sigma \in S_n$. 
\end{defi}

\begin{defi}
	An order-$d$ tensor $T$ is called \emph{simple}, if it is the product of $d$ linear forms, that is, if there exist linear maps $l_i\::\: \R^n \to \R$ such that
	\[
	T\sca{\alist[d]{v}} = l_1\sca{v_1}\cdots l_d\sca{v_d}
	\]
	for all $\alist[d]{v} \in \R^n$. 
	We  denote such a simple tensor by $l_1 \tens \cdots \tens l_d$ 
\end{defi}

Every Tensor can be written as a linear combination of simple tensors, since the $n^d$ tensors given by  \[
E_{(i_1, \ldots, i_d)}\sca{\alist[d]{v}} := e_{i_1}\sca{v_1}\cdots e_{i_d}\sca{v_d}
\] 
form a basis of the space of all linear maps $\R^n \times \ldots \times \R^n \to \R$, which we denote by $\mlspace$ (while the subspace of all symmetric tensors is denoted by $\symlspace$). We write $w = (i_1, \ldots, i_d) \in [n]^d$ and call $w$ a \emph{word} made of the ``letters'' $i_1, \ldots, i_d$. With respect to this basis,  we can write each tensor as a $d$-fold indexed array,  by looking at the coordinate representation\[
T \mapsto (T_w)_{w \in [n]^d}
\]
Also, we can define $X^w := X_{i_1} \cdot \ldots\cdot X_{i_d}$ for each word $w = (i_1, \ldots, i_d) \in [n]^d$.

\begin{pro} \label{pro:simple_sym}
	Let $T$ be a simple symmetric tensor. Then $T$ or $-T$ is the $d$-th power of some linear form, i.e.\[
	\exists a\in \lspace : \pm T =  a \tens \cdots \tens a =: a^{\tens d}
	\]
\end{pro}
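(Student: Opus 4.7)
The plan is to argue that up to scalars all the linear forms $l_1,\ldots,l_d$ appearing in a simple symmetric tensor must be equal, and then to absorb the overall scalar into a single linear form $a$, using a real $d$-th root (with a possible sign flip when $d$ is even).

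First I would dispose of the degenerate case: if any $l_i$ is the zero form, then $T = 0 = 0^{\tens d}$ and we are done. So assume from now on that $l_1,\ldots,l_d$ are all nonzero. The kernels $\ker l_i \subseteq \R^n$ are then proper linear subspaces, so their union cannot be all of $\R^n$, and I can pick a vector $v\in\R^n$ with $l_i\sca{v} \neq 0$ for all $i\in[d]$.

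The key step is to use the symmetry of $T$ on evaluations of the form ``$x$ in one slot, $v$ in every other slot''. For any $i\in[d]$ and any $x\in\R^n$, plugging $x$ into the $i$-th slot and $v$ everywhere else gives
\[
T\sca{v,\ldots,v,x,v,\ldots,v} \;=\; l_i\sca{x} \prod_{j\neq i} l_j\sca{v}.
\]
By the symmetry hypothesis, the value does not depend on which slot $x$ is placed in. Comparing the $i$-th and $1$-st slots and dividing by the nonzero quantity $\prod_{j\neq 1}l_j\sca{v}$ therefore yields, for every $x$,
\[
l_i\sca{x} \;=\; \frac{l_i\sca{v}}{l_1\sca{v}}\, l_1\sca{x}.
\]
Setting $\mu_i := l_i\sca{v}/l_1\sca{v}$ and $l := l_1$, this means $l_i = \mu_i\, l$ for every $i$, with $\mu_1 = 1$. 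Consequently $T = \lambda\, l^{\tens d}$, where $\lambda := \mu_1\cdots\mu_d \in \R$.

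Finally I would distribute the scalar $\lambda$ among the $d$ factors: set
\[
a \;:=\; \sqrt[d]{|\lambda|}\, l,
\]
understood with the real $d$-th root. If $d$ is odd, or if $d$ is even and $\lambda \geq 0$, then $\lambda = (\sqrt[d]{|\lambda|})^d \cdot \sgn(\lambda)^{\text{appropriate}}$ gives $T = a^{\tens d}$; if $d$ is even and $\lambda < 0$, the same choice gives $-T = a^{\tens d}$. In both cases $\pm T = a^{\tens d}$, as required. The whole argument is essentially bookkeeping once the proportionality step is in place; the only mildly subtle point is the sign adjustment for even $d$, which is exactly why the statement allows $\pm T$ rather than $T$ itself.
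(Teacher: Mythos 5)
Your proof is correct and follows essentially the same route as the paper's: use symmetry of $T$ to compare evaluations with $x$ in one slot and a suitably chosen $v$ elsewhere, deduce that all the $l_i$ are proportional, and then absorb the overall scalar via a real $d$-th root with a sign adjustment. Your explicit handling of the degenerate case where some $l_i = 0$ is a small bonus that the paper leaves implicit.
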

\begin{proof} Let $T = l_1 \tens \cdots \tens l_d$. Fix some $v$ and  consider the linear form 
	\[
	w \mapsto T\sca{v,\ldots,v,w} = l_1\sca{v}\cdots l_{d-1}\sca{v}\cdot l_d\sca{w}
	\]
	which is a multiple of the form $l_d$. We can clearly choose $v$ in such a way that $l_i\sca{v} \neq 0$ for $i\in\{1, \ldots, d\}$, since these linear forms will be nonzero and thus vanish on lower-dimensional subspaces.\footnote{It is well known that $\R^n$ is not a union of finitely many lower-dimensional subspaces.} Then the above form is even a nonzero multiple of $l_d$. Now by symmetry we may exchange the arguments of $T$ and get that $\ph$  is exactly the same as
	\[
	w \mapsto T\sca{v,\ldots,v,w,v} = l_1\sca{v}\cdots l_{d-2}\sca{v}\cdot l_{d-1}\sca{w}\cdot l_d\sca{v}
	\]
	whence $\ph$ is a nonzero multiple of $l_{d-1} $, too. Therefore, $ l_{ d - 1} $  must be a nonzero multiple of $l_d$.  By repeating this argument,  we may see that actually all of  the $l_i$ must be nonzero multiples of each other. We can therefore choose scalar multiples $\alist[d-1]{\lambda}$ such that $ l_i = \lambda_i l_d$ for $i\neq d$. Set $a:= \frac{l_d}{\sqrt[d]{|\prod_{i = 1}^{d-1} \lambda_i|}} $. Then\[
	T = \big(\prod_{i = 1}^{d-1} \lambda_i\big) l_d \tens \cdots \tens l_d = \pm a \tens \cdots \tens a
	\]	
	just as we  claimed.
\end{proof}

\begin{defi}
	The smallest number  $m \in\N\cup\{\infty\}$ for which  a symmetric tensor $T$ admits a representation \[
	T = \sum_{i= 1}^m a_i^{\tens d}
	\]as a sum of $ m $ $d$-th (tensor) powers of linear forms is called the \emph{(symmetric) rank} of $T $.	
\end{defi}

Please be aware that the argument of Proposition \ref{pro:simple_sym} works only if the tensor really is simple. In a general symmetric tensor,  we can usually not exchange a non-symmetric simple summand by $\pm 1$ times a power of a linear form.  Requiring the factors of  $\pm 1 $ to be  $ 1 $ is a serious restriction, too, in particular since for even $d$ it implies  that the tensor is \emph{positive semidefinite},  that is, $T\sca{\alist[d]{v}} \geq 0$ for all $(\alist[d]{v})\in \R^n \times \ldots \times \R^n$. Therefore the concept of rank we use in this thesis might or might not be different from what  the reader could have read before in other works on tensor decomposition:  Some people  define the rank of a tensor $T$ as the smallest number of simple tensors such that $T $ can be written as a linear combination of them.

Due to these restrictions, a tensor decomposition does not need to exist in general. If no tensor decomposition exists,  we understand the rank to be infinite. Still, these particular  decompositions of symmetric tensors are an interesting object of study:  As we saw in the introductory chapter,  moment tensors of  finitely supported measures will always admit a tensor decomposition.

Now, let us just briefly point out the connection between symmetric multilinear forms and homogeneous polynomials that we both called tensors.

A multi-index $\al$ with $|\al| = d$ corresponds to an equivalence class of $d$ indices $(\alist[d]{i})$ modulo permutation. In particular, when writing down a polynomial in its coefficient representation, the coefficients of the monomials $X^\al$ correspond to the ``upper triagonal'' entries $T_{(\alist[d]{i})}$ of a traditionally indexed tensor (where by upper triagonal we mean the entries corresponding to words $(\alist[d]{i})$ such that $i_1 \leq \ldots \leq i_d$ ~--~ note that a symmetric tensor is determined by its upper triagonal). 
Thus there is a linear map 
\begin{align}
\varphi &\::\: \mlspace \to \hompolyspace{d},\: \nonumber \\
& T  \mapsto \sum_{|\al| =  d} X^\al \:\: \big(\sum_{\stackrel{w \in \N^d \text{ s.t.}}{X^w = X^\al}} T_w \big)
\end{align} 
which gets bijective when restricted to $\symlspace$.
Indeed, if $T$ is symmetric, the coefficients corresponding to $w$ and $w'$ are the same if the words $w$ and $w'$ are permutations of each other, which is the case if and only if $X^w = X^{w'}$. From that we see that 
\begin{align}
\sum_{\stackrel{w \in \N^d \text{ s.t.}}{X^w = X^\al}} T_w = \binom{d}{\al} T_w \label{eq:symtensorsid}
\end{align}
for any word $w'$ satisfying $X^{w'} = X^{\al}$. Here we denote by $\binom{d}{\al}$ the number of words $w$ satisfying $X^w = X^{\al}$. It can be shown that \[
\binom{d}{\al} = \frac{d!}{\al_1! \cdots \al_n!}
\]
(thus this is consistent with our definition in \ref{not:init_nots}). For that reason, we call $\binom{d}{\al}$ the \emph{multinomial coefficients}. Since $\binom{d}{\al} > 0$, the identity \eqref{eq:symtensorsid} shows injectivity as well as surjectivity, whence the two spaces are isomorphic. \\
Under the map $\varphi$,  simple tensors are mapped in the following way
\[
a_1\tens\ldots\tens a_d \: \mapsto \:  \lfp{a_1} \cdot \ldots \cdot \lfp{a_d}
\]
This implies that the notions of this  section and \secref{sec:momandtdecomp}  correspond.\\

Viewing tensors  as polynomials  has several advantages over the traditional representation as multilinear forms or arrays with $d$ indices, respectively.  One is that we can add  two tensors  of different orders,  since they're both members of the polynomial ring.  Another one is that the tensor product  of two simple  tensors  $\sca {a, X }^k $  and $\lfp{b } ^l $ becomes the regular product $\sca {a, X }^k\lfp{b } ^l $ of two polynomials. However,  the most important advantage is for sure that polynomials  are the native environment  for SOS  optimisation (see next section).\\

\newpage
\subsection{SOS Optimisation} \label{sec:sos_opt}
Sums of squares optimisation is a powerful tool that allows us to check whether or not a polynomial $W$  can be written as a sum of squares of polynomials.  Furthermore,  it allows us to optimise over all  such sums of squares (SOS) polynomials  $W$  satisfying certain linear constraints.

As such,  it is a generalisation of linear programming  (LP) and it serves,  in some sense, as an approximation to the condition that a polynomial is  nonnegative. The main novelty that SOS programming brings is that it allows to solve optimisation problems of the form:
\begin{align}
\max f(W) \quad \text{over all $W \in \sos{d} \cap \mathcal{L}$}
\end{align}
for some function $ f $ being linear in the coefficients of the polynomial  $W$ and some  linear subspace $\mathcal{L} \subseteq \polyspace{d}$.  Here,  $\sos{d}$ denotes  the convex cone of all SOS polynomials of degree $\leq d$.  Here, $d $ is called the \emph{degree} of the programme.

The motivation behind this is that  we want to optimise over  polynomials which satisfy a  global nonnegativity constraint $W\geq 0$. 
Alas, nonnegative polynomials are hard to optimise over,  so we need to replace the nonnegativity condition $\geq 0$ by something which is easier to verify:
\begin{defi} Let $P, Q \in \polyspace{d}$. We write \[
	P\preceq Q :\Longleftrightarrow Q-P\in\sos{d}	
	\]
\end{defi}
Of course each sums of squares polynomial will be globally nonnegative. The converse direction is in general not true,  however,  there are some special cases where we know by classical results dating back to D. Hilbert that equivalence holds between ``$\geq 0$'' and ``$\succeq 0$'' (e.g. 2.1.1, 2.3.2 and 2.3.5 in \cite{RAG}).

The first case  are univariate polynomials,  which is really easy to see by computing the polynomial factorisation over $\C$ and then grouping the non-real factors by pairs of complex conjugates. The remaining real roots  then have to be two-fold due to nonnegativity. The second case is the one of degree two polynomials in an arbitrary number of variables. Here the equivalence  can be proven by diagonalisation of symmetric matrices (after homogenising). The reason why the sums of squares condition is so much easier to  check is that it can be reduced to the task of finding some psd matrix:

\begin{notation}
Let $ x\in\R^n $. We denote
\begin{align}
	X^{\leq d} &= (X^\al)_{|\al| \leq d} \:\:\text{and}\\
	x^{\leq d} &= (x^\al)_{|\al| \leq d} 
\end{align}
Furthermore, let $\monoms{d} := \{X^\al \mid \:{|\al| \leq d} \}$. With respect to any fixed ordering of this set of cardinality $\nummonoms{d}$,  we may identify $\monoms{d} \equiv [\nummonoms{d}]$ and treat elements of  $\R^{\monoms{d} \times \monoms{d}} $ as matrices.

\end{notation}

\begin{defi}
Let $W \in\polyspace{d}$. 
We call $G = (G_{\al, \beta}) \in \R^{\monoms{d/2}\times \monoms{d/2}}$  a \emph{Gram matrix}  of $W $,  if 
\begin{align}
	W = \transp{\left(\monX{d/2}\right)} G \left.\monX{d/2}\right.
\end{align}	
\end{defi}

Since any monomial of  degree less or equal to $ d $ is a product of two monomials of degree less or equal to $\frac{d}{2} $,  each polynomial has a Gram matrix representation. Gram matrices are  in general highly non-unique since for instance any monomial of degree greater  or equal than 2 can be written in at least two different ways.  Indeed,  we can write  the monomial as $X_i\cdot X^\al$ for some $i\in[n]$ and some $\al \neq (0,\ldots,0) $  but also as   $ 1\cdot \left(X_iX^\al\right) = X^{(0,\ldots,0)}\cdot X^\beta$ for $\beta := \al + e_i$.  Hence the coefficient of this monomial in  $W $  may be distributed over several entries of $G $. 

It can be shown that a polynomial is a sum of squares if and only if it has a  positive semidefinite Gram matrix (see for instance §2.6 in \cite{RAG}).  This condition can be encoded within a  \emph{semidefinite programme} (SDP)  and then be  checked by an SDP  solver.  The condition that $G $ is a Gram matrix of $W $ can likewise be encoded by adding linear constraints between the coefficients of $G $ and the coefficients of $W $.

Sums of squares programmes can be,  in some sense,  efficiently solved numerically by SDP  solvers.  However,  there are some  caveats  related to feasibility and the coefficients which one would need to take care of  in order to get precise formulations  of the guarantees  SOS solvers  give. For simplicity,  we  will not do this but instead formulate what we call the  \emph{``magical black box rule'' of SOS programming}:

\paragraph*{Magical Black Box Rule of SOS Programming:} An $n$ variate SOS programme  of degree $d $ can be solved in time $n^{\landaueq{d}}$. \footnote{By ``solved'' we mean that if $f$ is the linear objective function and $W^\ast$ is some optimal solution then to any fixed numerical accuracy $ \gamma > 0$ we can find in time $n^{\landaueq{d}}$ a feasible polynomial $W$ which achieves $|f(W^\ast) - f(W)| < \gamma$, where the hidden constants in the Landau notation may depend on $\gamma$.  For simplicity, we will pretend that this would work for $\gamma = 0$ as  well.}  \\
\\
Now, this rule is wrong: There exist at least the following caveats:
\begin{enumerate}[(I)]
	\item  There is an issue  with the exact complexity of  general feasibility problems,  that we will not get into.
	\item We  ignored the encoding length of the coefficients we used to write down the linear constraints. Also, it is recommendable to have a bound on the diameter of the feasible space.
	\item The  number of linear constraints has an impact on the running time,  but following P. Parillo \cite{Par00}, it is usually ignored since  the cost of the linear constraints is dominated by the cost of the SOS constraints.
\end{enumerate}
  
However,  for most cases these will not be an issue and therefore people usually pretend that you can  solve an SOS programme of fixed degree $d$ exactly in polynomial time. 

In order to not go beyond  the scope of this thesis,  we will leave it with this very coarse introduction to SOS programming.  It's needless to say that we  merely scratched the surface of a huge topic. There is a long and detailed survey of M. Laurent \cite{Lau09} that we recommend to anyone interested in the details. Since we did not give a theorem with rigid runtime guarantees,  we will refrain from claiming any such runtime guarantees (in the sense of the classical complexity classes of theoretical computer science) on the algorithms we develop. Instead, we will use  a simpler notion of efficiency:

We say that a problem can be solved efficiently, if there exists  a \emph{polynomial time SOS algorithm} for it: 
\begin{defi}
	Suppose $\mathcal{B}$ is a black box that can solve all $n$-variate SOS optimisation programmes of degree $d$ in time $n^d $. 
	An \emph{SOS algorithm} is an algorithm which may  make arbitrarily many calls to the black box $\mathcal{B}$ in addition to the usual algorithmic operations (for each call,  the time it takes the black box to process the call is of course counted towards the running time of the SOS algorithm). 
\end{defi}
In the next section,  we will develop an SOS algorithm for the moment decomposition problem  which calls the black box $m $ times with $d = 2m $. This algorithm is not efficient with respect to the notion we just defined. This is due to the fact that the black box calls take time exponential in $m$. 

In Section \secref{sec:effdecomp},  we will then show that basically the same algorithm can be used with a lower value of $d$ (e.g. $d$ so small that we get a polynomial time SOS algorithm), as long as we accept that the algorithm will return only an approximate solution. In practice, the value of  $d $ will usually be determined by the amount of data that is given (at least if we are in the empirical case). An increase in the value of  $d $ will therefore not just mean an increase in computation time,  but also require more data, as outlined in the introductory chapter.\footnote{Note that  increasing the accuracy at will is thus usually not possible with this algorithm. However,  if we get sufficiently close to the actual solution,  then we could try to find the latter by  running a local searching procedure afterwards. } Therefore it makes sense to analyse what guarantees we get for a fixed value of $d$ instead of going the other way around and say what  value of $d $  is needed to get a guarantee  that the approximation error  is smaller than a fixed constant $\varepsilon$. Our approach is also the one which is technically more convenient.\\

As a last note on SOS programming,  recall that in the introductory chapter we defined \emph{pseudo-expectation operators} $\E$ of degree $d$ as linear functionals  on the space $\polyspace{d}$ satisfying 
\begin{align*}
&(1) \quad \E[1] = 1\\
&(2) \quad \E[P^2] \geq 0 \text{ for all polynomials }P \text{ with } P^2\in \polyspace{d}
\end{align*}
and claimed that we could efficiently optimise over them. This is due to the fact that the cone of  pseudo expectation operators of degree $d$ is the dual cone of $\sos{d}$. The literature on SOS programming seems to have  a bias towards the dual point of view,  but we will prefer to work directly with optimisation over SOS polynomials.

\newpage
\subsection{Multilinear Algebra on the Space of Polynomials}\label{sec:rezstuff}
In this chapter, we will introduce some basic notions and notations that will facilitate the operations we have to employ when dealing with (symmetric) tensors. 

We will have to redefine the usual methods  for tensor manipulation  in terms of polynomials. For those already used to tensor decomposition, but unused to see tensors as members of a polynomial ring: We will, in particular, define the \emph{multilinear multiplication} in such a way that it can be computed in terms of the monomial coefficients (i.e. the upper triagonal) only. It turns out that this was the reason why we had to scale the standard basis of monomials by a factor $\binom{d}{\al}$, which corresponds to the size of one of the equivalence classes described in \secref{sec:tensor_notation}.

\begin{notation}
	For $P\in \polyspace{d}$ and $k \leq d$, we denote by $P_{=k}$ the $k$-th \emph{homogeneous part} of $P$, that is \[
	P_{=k} = \sum_{|\al| = k} \binom{k}{\al} P_\al  X^\al.
	\]
	We have of course $P_{=k} \in \hompolyspace{k}$ and \[
	P = \sum_{k = 0}^d P_{=k}
	\]
\end{notation}

The reason why this representation is more suitable for our purposes is that it fits well with the following inner product on $\polyspace{d}$.

\begin{defi} (Reznick's Scalar Product, \cite{Rez92})
	Let $P, Q \in \polyspace{d}$. Consider their representations \[
	P = \sum_{|\al| \leq d} \binom{|\al|}{\al} P_\al  X^\al, \quad Q = \sum_{|\al| \leq d} \binom{|\al|}{\al} Q_\al  X^\al
	\]
	We define \[
	\rezprod{P}{Q} := \sum_{|\al| \leq d} \binom{|\al|}{\al} P_\al  Q_\al
	\] 
	which is bilinear in $P$ and $Q$. Fixing the left hand side, we get a linear form \[
	\phi_{Q}: \polyspace{d} \to \R,\: P \mapsto \rezprod{Q}{P}
	\]
\end{defi}

It is immediate that $\rezprod{\cdot}{\cdot}$ defines an inner product on $\polyspace{d}$. In fact, it is actually just a rescaled version of the standard scalar product on $\polyspace{d}$ w.r.t. the monomial basis. The following proposition shows that the Reznick inner product allows us to express polynomial evaluation. 

\begin{pro}\label{rezevalhom}
	Let $P\in \hompolyspace{k}$ homogeneous and $a\in\R^n$ (where $k\in\N_0$). Then \[
	\rezprod{P}{\sca{a, X}^k} = P(a)
	\]
	More general, if $P\in \polyspace{d}$ is not necessarily homogeneous, then for $k\leq d$\[
	\linfunc{P}(\lfp{a}^k) = \rezprod{P}{\sca{a, X}^k} = P_{=k}(a)
	\]
\end{pro}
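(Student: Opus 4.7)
The plan is to exploit the scaling convention for polynomials introduced in Notation \ref{not:coeff_convention} together with the multinomial theorem. The scaling was chosen precisely so that the coefficients of $\lfp{a}^k$ are monomials evaluated at $a$, without any stray multinomial factors.

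First I would write out $\lfp{a}^k$ in the prescribed basis. By the multinomial theorem,
\[
\lfp{a}^k = \sum_{|\al| = k} \binom{k}{\al} a^\al X^\al,
\]
and comparison with the convention $Q = \sum_{|\al| \leq d} \binom{|\al|}{\al} Q_\al X^\al$ shows that the coefficients of $Q := \lfp{a}^k$ in this convention are $Q_\al = a^\al$ for $|\al| = k$ and $Q_\al = 0$ otherwise.

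Next I would plug this into the definition of $\rezprod{\cdot}{\cdot}$ for a general $P = \sum_{|\al| \leq d} \binom{|\al|}{\al} P_\al X^\al \in \polyspace{d}$. Only the terms with $|\al| = k$ survive, giving
\[
\rezprod{P}{\lfp{a}^k} = \sum_{|\al| \leq d} \binom{|\al|}{\al} P_\al \, Q_\al = \sum_{|\al| = k} \binom{k}{\al} P_\al \, a^\al.
\]
Recognising the right-hand side as $P_{=k}(a)$ (since $P_{=k} = \sum_{|\al| = k} \binom{k}{\al} P_\al X^\al$ evaluated at $a$) yields the general identity, and the homogeneous case is just the special instance $P = P_{=k}$, so $P(a) = P_{=k}(a)$. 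The identification $\linfunc{P}(\lfp{a}^k) = \rezprod{P}{\lfp{a}^k}$ is then merely the definition of $\linfunc{P}$.

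There is no genuine obstacle here; the only thing that requires care is tracking the binomial factors coming from the nonstandard basis versus those coming from the multinomial theorem, and verifying that they match up so that no extra $\binom{k}{\al}$ is left over or missing. This matching is precisely the point of the convention in Notation \ref{not:coeff_convention}.
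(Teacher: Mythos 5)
Your proof is correct and follows essentially the same route as the paper's: expand $\lfp{a}^k$ via the multinomial theorem, read off its coefficients in the scaled monomial basis, and plug into the definition of the Reznick inner product to obtain $\sum_{|\al|=k}\binom{k}{\al}P_\al a^\al = P_{=k}(a)$. The only cosmetic difference is that you handle general $P$ directly and then note the homogeneous case as a specialisation, whereas the paper does the homogeneous case first and states the general case as an immediate consequence.
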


\begin{proof} Let first $P\in \hompolyspace{k}$. By the multinomial theorem we have \[
	\sca{a, X}^k = \sum_{|\al| =  k}  \binom{k}{\al} a^\al  X^\al
	\]
	Using this representation, we can write down both sides of the equation as:
	\begin{align*}
	\rezprod{P}{\sca{a, X}^k} &= \sum_{|\al| =  k}  \binom{k}{\al} P_\al a^\al  \\
	P(a) = \sum_{|\al| \leq d} \binom{k}{\al} P_\al a^\al &= \sum_{|\al| =  k}  \binom{k}{\al} P_\al a^\al
	\end{align*}
	The last step in the second line uses that all non-degree-$k$ coefficients of $P$ vanish due to homogeneity. The claim for general $P$ is an immediate consequence. 
\end{proof}

We'll need a slightly more general version of this fact:

\begin{pro}\label{pro:rezeval}
	Let $P\in \polyspace{d}$ and $a\in\R^n$. Then \[
	\rezprod{P}{\sum_{k = 0}^d \sca{a, X}^k} = \sum_{k = 0}^d \rezprod{P_{=k}}{\sca{a, X}^k} = P(a)
	\]
\end{pro}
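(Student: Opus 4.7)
The plan is to reduce the statement directly to Proposition \ref{rezevalhom} by exploiting two properties of the Reznick inner product: its linearity in the second slot and the fact that monomials of different total degrees are orthogonal with respect to it.

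First I would use linearity of $\rezprod{\cdot}{\cdot}$ in the second argument to write
\[
\rezprod{P}{\sum_{k=0}^d \sca{a,X}^k} = \sum_{k=0}^d \rezprod{P}{\sca{a,X}^k}.
\]
Next, by inspection of the defining formula $\rezprod{P}{Q} = \sum_{|\al| \leq d} \binom{|\al|}{\al} P_\al Q_\al$, only coefficients belonging to the same multi-index contribute; since $\sca{a,X}^k$ is homogeneous of degree $k$ and therefore has only degree-$k$ coefficients, every coefficient $P_\al$ of $P$ with $|\al| \neq k$ gets multiplied by zero. Thus $\rezprod{P}{\sca{a,X}^k} = \rezprod{P_{=k}}{\sca{a,X}^k}$, which establishes the first equality.

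For the second equality, I would apply Proposition \ref{rezevalhom} to each homogeneous piece $P_{=k} \in \hompolyspace{k}$, obtaining $\rezprod{P_{=k}}{\sca{a,X}^k} = P_{=k}(a)$. Summing over $k$ and using that $P = \sum_{k=0}^d P_{=k}$ implies $P(a) = \sum_{k=0}^d P_{=k}(a)$ by pointwise evaluation, the chain of equalities closes.

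There is no real obstacle here; the statement is essentially a corollary of Prop. \ref{rezevalhom} combined with the bilinearity of $\rezprod{\cdot}{\cdot}$ and the orthogonality of homogeneous parts of different degrees under it. The only subtle point worth stating explicitly is this last orthogonality, which follows immediately from the coefficient-wise definition of $\rezprod{\cdot}{\cdot}$ and is what makes the restriction from $P$ to $P_{=k}$ in the middle expression legitimate.
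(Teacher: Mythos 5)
Your proof is correct and takes essentially the same route as the paper: apply Proposition \ref{rezevalhom} to each homogeneous part $P_{=k}$ and sum. The only difference is that you also spell out the first equality (linearity plus degree-wise orthogonality of the Reznick inner product), which the paper leaves implicit and only explicitly verifies the second equality $\sum_{k}\rezprod{P_{=k}}{\sca{a,X}^k}=P(a)$.
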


\begin{proof} Applying the previous proposition on all homogeneous parts $P_{=k}$, we get 
	\[
	P(a) = \sum_{k = 0}^d P_{=k}(a) = \sum_{k = 0}^d \rezprod{P_{= k}}{\sca{a, X}^k}
	\]
\end{proof}
Sometimes we want to apply the Reznick scalar product only partially to a given Polynomial $P$. This is achieved by the following ``tensoring''.\footnote{For homogeneous $Q$, this corresponds to the notion of the tensor product of linear maps. We will use (or, more accurately, abuse) this notation even for the case when $Q$ is not homogeneous. }

\begin{defi} Fix an integer $k\in \{0,\ldots, d\}$, which we will suppress in the notation along with $d$. Let $Q \in \polyspace{k}$.  Denote by $V = (\alist[n]{V})$ a new vector of algebraic unknowns. Define 
	\begin{align*}
	\rezprod{Q \tens \id}{P} :=  \rezprod{Q \sca{X, V}^{d-k}}{P}
	\end{align*}
	for each $P\in\polyspace{d}$. 
	Here,  the algebraic vector $V $ is plugged in formally as if it was an ordinary real vector $v\in\R^n$.
	This operation yields a homogeneous polynomial in $\R[V]_{= d-k}$. 
	Note that the map \[
	\phi_{Q\tens id}:  \polyspace{d}\to \R[V]_{=d-k}, P \mapsto \rezprod{Q \tens \id}{P}
	\] is linear in both $P$ and $Q$.
\end{defi}

The following proposition shows that $\phi_{Q\tens id}$ can be seen as a linear shrinking map that replaces powers of $\lfp{a}$ by ``evaluation terms'' $Q(a)$.

\begin{pro}\label{rezevalproof} For any $a\in\R^n$ and any polynomial $Q$ of degree at most $k\leq d$ we have\[
	\linred{Q}(\sum_{l = 0}^d \sca{a, X}^l) = \rezprod{Q \tens \id}{\sum_{l = 0}^d \sca{a, X}^l} = Q(a) \sca{a, V}^{d-k}
	\] 
\end{pro}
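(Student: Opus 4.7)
The plan is to peel back the definition of $\rezprod{Q\tens\id}{\cdot}$ and reduce the statement to Proposition \ref{pro:rezeval}. First I would unfold the definition to obtain
\[
\rezprod{Q\tens\id}{\sum_{l=0}^d \sca{a,X}^l} = \rezprod{Q\sca{X,V}^{d-k}}{\sum_{l=0}^d \sca{a,X}^l},
\]
where the Reznick product is taken in the $X$-variables and $V$ is carried along as a formal vector of unknowns.

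Next I would argue that Proposition \ref{pro:rezeval} still applies to the left factor, even though the polynomial $Q(X)\sca{X,V}^{d-k}$ has coefficients in $\R[V]$ rather than in $\R$. This follows from the $\R$-bilinearity of $\rezprod{\cdot}{\cdot}_F$: expanding $\sca{X,V}^{d-k}=\sum_{|\gamma|=d-k}\binom{d-k}{\gamma}V^\gamma X^\gamma$ by the multinomial theorem gives
\[
Q(X)\sca{X,V}^{d-k} = \sum_{|\gamma|=d-k}\binom{d-k}{\gamma}V^\gamma \cdot \big(Q(X) X^\gamma\big),
\]
and each $Q(X)X^\gamma$ is an ordinary real polynomial in $X$ of degree at most $d$. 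Pulling the scalars $V^\gamma$ out of the Reznick product and applying Proposition \ref{pro:rezeval} to each real-coefficient summand, I would obtain
\[
\rezprod{Q\sca{X,V}^{d-k}}{\sum_{l=0}^d\sca{a,X}^l} = \big(Q(X)\sca{X,V}^{d-k}\big)\big|_{X=a},
\]
i.e.\ the value of the first factor at $X=a$. Substituting then yields $Q(a)\sca{a,V}^{d-k}$, exactly as claimed.

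The main obstacle here is bookkeeping rather than anything mathematical: one has to make it unambiguous that the Reznick product acts only in the $X$-variables while $V$ is treated as a scalar parameter, and then justify cleanly that Proposition \ref{pro:rezeval} transports to polynomials whose coefficients live in the polynomial ring $\R[V]$. Once that is set up, the statement collapses to ordinary polynomial evaluation at $X=a$.
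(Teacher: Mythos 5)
Your proof is correct, but it takes a genuinely different route from the one in the paper. The paper's proof is very short: it applies Proposition~\ref{pro:rezeval} for every \emph{concrete} vector $v\in\R^n$, obtaining $\rezprod{Q\sca{X,v}^{d-k}}{\sum_{l=0}^d\sca{a,X}^l}=Q(a)\sca{a,v}^{d-k}$, and then argues that since both sides are polynomial expressions in $v$ agreeing on all of $\R^n$ (a Zariski-dense set), the identity must also hold formally in the variable vector $V$. You instead unfold $\sca{X,V}^{d-k}$ by the multinomial theorem, use $\R[V]$-bilinearity of the Reznick pairing to pull the monomials $V^\gamma$ out as scalars, and reduce to finitely many applications of Proposition~\ref{pro:rezeval} to the genuine real-coefficient polynomials $Q(X)X^\gamma\in\polyspace{d}$; reassembling via the multinomial theorem gives $Q(a)\sca{a,V}^{d-k}$. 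Your argument is more explicit and elementary — it avoids the appeal to Zariski density entirely and makes the base-change to $\R[V]$ transparent through a concrete coefficient-level computation — whereas the paper's argument is shorter but leans on the abstract density principle. Both are fully rigorous; yours is perhaps preferable pedagogically since it clarifies precisely how the pairing ``acts only in the $X$-variables,'' which is the one point you correctly identified as needing care.
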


\begin{proof} From Prop. \ref{pro:rezeval} it follows that $\rezprod{Q \sca{X, v}^{d-k}}{\sum_{l = 0}^d \sca{a, X}^l} = Q(a) \sca{a, v}^{d-k}$  for any concrete value $v \in\R^n$ of $V$. Since $\R^n$ is Zariski-open, this identity must hold on the level of variables, too.
\end{proof}
Note that all terms $\sca{a, X}^l$ of degree less than $d-k$ are redundant: Their contribution will vanish, since $Q \sca{X, v}^{d-k}$ has no monomials of degree less than $d-k$.  After the reduction by the polynomial $Q $ is done,  there is no need to keep the additional variable $V $. Therefore, we use the convention to silently replace $V $ again by $X $.

\begin{kor} \label{kor:moment_reduction}
	Let $T_0,\ldots, T_d$ be given Tensors of orders $0,\ldots, d$ having a moment decomposition $\wlistdec$, that is,
	\[
	T_l = \sum_{i = 1}^m \lambda_i \lfp{a_i}^{l} , \quad l \in\{0,\ldots, d\}
	\]
	Let $Q \in \polyspace{d}$ be of degree $k$.
	Then we have\[
	\linred{Q}(\sum_{l = 0}^d T_l) = \rezprod{Q \tens \id}{\sum_{l = 0}^d T_l} = \sum_{i=1}^m \lambda_i Q(a_i) \sca{a_i, X}^{d-k}
	\] 
\end{kor}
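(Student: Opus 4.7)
The plan is to reduce the corollary to a routine application of linearity of the map $\linred{Q}$ together with Proposition \ref{rezevalproof}, which already establishes the single-component identity.

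First, I would substitute the moment decompositions $T_l = \sum_{i=1}^m \lambda_i \lfp{a_i}^l$ into the argument $\sum_{l=0}^d T_l$ and rearrange the double sum to group by the index $i$ of the component:
\[
\sum_{l = 0}^d T_l = \sum_{l=0}^d \sum_{i=1}^m \lambda_i \lfp{a_i}^l = \sum_{i=1}^m \lambda_i \sum_{l=0}^d \lfp{a_i}^l.
\]
Next, I would invoke the bilinearity of $\rezprod{\cdot}{\cdot}$ (equivalently, the linearity of $\phi_{Q\tens\id}$ in its second argument, as noted in the definition of $\linred{Q}$) to pull the weights $\lambda_i$ and the outer sum outside:
\[
\linred{Q}\bigl(\sum_{l=0}^d T_l\bigr) = \sum_{i=1}^m \lambda_i \,\linred{Q}\bigl(\sum_{l=0}^d \lfp{a_i}^l\bigr).
\]

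Then I would apply Proposition \ref{rezevalproof} to each summand, with the vector $a$ of that proposition being $a_i$, which yields $\linred{Q}(\sum_{l=0}^d \lfp{a_i}^l) = Q(a_i) \sca{a_i, V}^{d-k}$. After invoking the convention to silently rename the auxiliary variable $V$ back to $X$ once the reduction has been performed, each summand becomes $Q(a_i) \sca{a_i, X}^{d-k}$, and adding them up with weights $\lambda_i$ produces exactly the claimed formula.

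There is essentially no obstacle here: the statement is a direct aggregation of Proposition \ref{rezevalproof} over the $m$ components of the moment decomposition, and the only thing to be careful about is that the degrees of the homogeneous parts $\lfp{a_i}^l$ match up correctly so that the partial contractions from Proposition \ref{rezevalproof} may indeed be applied termwise; since the bilinearity of Reznick's product respects the grading, this causes no difficulty.
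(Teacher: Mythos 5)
Your proposal is correct and follows essentially the same route as the paper: substitute the moment decomposition, regroup the double sum by component index, appeal to (bi)linearity of $\rezprod{\cdot}{\cdot}$, and apply Proposition \ref{rezevalproof} termwise. The only cosmetic difference is the order in which you regroup versus invoke linearity, which is immaterial.
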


\begin{proof}\begin{align*}
	&\rezprod{Q \tens \id}{\sum_{l = 0}^d T_l} =  \rezprod{Q \tens \id}{\sum_{l = 0}^d \sum_{i = 1}^m \lambda_i \lfp{a_i}^{l}} \\
	= &\sum_{l = 0}^d \sum_{i = 1}^m \lambda_i \rezprod{Q \tens \id}{ \lfp{a_i}^{l}}\\
	= &\sum_{i = 1}^m \lambda_i \rezprod{Q \tens \id}{\sum_{l = 0}^d  \lfp{a_i}^{l}} \\
	= &\sum_{i = 1}^m \lambda_i Q(a_i)\sca{a_i, V}^{d-k}
	\end{align*}
\end{proof}

\newpage
\section{The v-Algorithm for exact Moment and Tensor Decomposition}\label{sec:valgo}

\subsection{v-Algorithm for Moment Decomposition}\label{sec:valgoqf}

In this section, we  consider  a finitely supported measure  $\mu = \sum_{i = 1}^m \lambda_i \delta_{a_i}$ and we want to recover  the $\lambda_i$ and the distinct vectors $a_i$. Our main result in this section is that if we are given sufficiently many moments $\moment_0,\ldots, \moment_d$ of  $\mu $ (where $d\geq 2m$ will suffice),  we can compute them exactly by SOS programming as a the unique moment decomposition of $\moment_0,\ldots, \moment_d$. Precisely:

\begin{sat}\label{thm:valgoex} (Exact decomposition from moments up to $2m$) \\
	Let $m > 1$, $\alist[m]{a} \in \R^n$ be distinct vectors, $\alist[m]{\lambda}\in \R_{> 0}$. Suppose we are given the $k-$th moment \[
	\moment_k = \sum_{i = 1}^m \lambda_i \lfp{a_i}^{k}
	\]
	of the quadrature formula $\wlistdec$ for each $k \in \{1,\ldots, 2m\}$. Then $\wlistdec$ is the only moment decomposition of $\alist[2m]{M}$ and there exists an SOS algorithm to compute $\wlistdec$ from the input $\alist[2m]{M}$ in time $n^{\landaueq{m}}$.
\end{sat}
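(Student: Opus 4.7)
The strategy is a matrix-reduction scheme that peels off one component per round. In each round I pick a random direction $v\in\Sn$ and solve a degree-$2m$ SOS programme to find a reduction polynomial $W\in\sos{2(m-1)}$ whose values $W(a_i)$ vanish on every remaining component except one $a_{j^*}$; applying the partial reduction $\linred{W}$ of Kor.~\ref{kor:moment_reduction} then turns the given moments into the rank-$1$ quadratic form $\lfp{a_{j^*}}^2$, from which $a_{j^*}$ is recovered by eigenvalue decomposition. After deflating the moments by $\lambda_{j^*}\lfp{a_{j^*}}^l$, the instance has $m-1$ atoms and the procedure recurses.

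Concretely, by Prop.~\ref{pro:rezeval} and Kor.~\ref{kor:moment_reduction} we have $\linfunc{Q}(\sum_{l=0}^{2m}\moment_l)=\sum_{i=1}^m\lambda_i Q(a_i)$ for every $Q\in\polyspace{2m}$. The programme I would solve is
\begin{align*}
\max_{W\in\sos{2(m-1)}}\ \linfunc{W\lfp{v}^2}\Bigl(\sum_{l=0}^{2m}\moment_l\Bigr)\qquad\text{s.t.}\qquad\linfunc{W}\Bigl(\sum_{l=0}^{2m}\moment_l\Bigr)=1.
\end{align*}
Introducing $p_i:=\lambda_i W(a_i)$, the SOS condition forces $p_i\ge 0$, the linear constraint reads $\sum_i p_i=1$, and the objective becomes $\sum_i p_i\sca{a_i,v}^2$ --- a linear functional on the probability simplex, whose unique maximiser for generic $v$ is the vertex $p=e_{j^*}$ with $j^*=\argmax_i\sca{a_i,v}^2$.

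The key technical step is that the map $W\mapsto(p_i)_{i=1}^m$ hits every vertex of the simplex. Since the $a_i$ are pairwise distinct, for every $i\neq j$ I can pick an affine form $\ell_{i,j}$ vanishing at $a_i$ with $\ell_{i,j}(a_j)\neq 0$; then $Q_j:=\prod_{i\neq j}\ell_{i,j}$ has degree $m-1$, vanishes on $\{a_i:i\neq j\}$ and is nonzero at $a_j$. Hence $W_j:=Q_j^2/(\lambda_j Q_j(a_j)^2)$ lies in $\sos{2(m-1)}$, is feasible and realises $p=e_j$. Consequently every optimal $W^*$ satisfies $W^*(a_i)=0$ for $i\neq j^*$ and $\lambda_{j^*}W^*(a_{j^*})=1$, so by Kor.~\ref{kor:moment_reduction} the reduced form equals $\linred{W^*}(\sum_l\moment_l)=\lfp{a_{j^*}}^2$; eigendecomposition of the associated rank-$1$ matrix gives $a_{j^*}$ up to sign, and the sign is fixed by the additional odd-order evaluation $\linfunc{W^*\lfp{v}}(\sum_l\moment_l)=\sca{a_{j^*},v}$. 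Finally $\lambda_{j^*}=1/W^*(a_{j^*})$, and the update $\moment_l\leftarrow\moment_l-\lambda_{j^*}\lfp{a_{j^*}}^l$ reduces the instance to one with $m-1$ components.

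Uniqueness of $\wlistdec$ follows either by running the same algorithm on any hypothetical second decomposition with $\le m$ atoms (which by construction must yield the same output) or directly from the Lagrange-separation argument sketched in the introduction applied to the union of two competing supports, which has cardinality at most $2m$ and is therefore separable by polynomials of degree $\le 2m$. For the runtime, each of the $m$ rounds invokes the SOS black box once on an $n$-variate programme of degree $\le 2m$, so the total running time is $n^{\landaueq{m}}$. I expect the main technical hurdle to be the ``singleton-attainment'' step --- building an \emph{SOS} reduction polynomial of degree exactly $2(m-1)$ realising each vertex of the simplex --- together with handling the measure-zero set of directions $v$ for which the argmax fails to be unique and the standard attainment/boundedness caveat for the SOS maximum.
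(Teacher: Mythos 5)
Your proposal is correct and follows essentially the same route as the paper: maximise a linear discriminator $\linfunc{W\lfp{v}^2}$ over SOS weight polynomials normalised by $\linfunc{W}=1$, observe that the resulting $p_i=\lambda_i W(a_i)$ range over a subset of the probability simplex that contains the vertex $e_{j^*}$ (via an interpolation-type SOS testimony of degree $2(m-1)$), so the optimiser must concentrate entirely on $a_{j^*}$, yielding a rank-$1$ matrix reduction. The only departures are cosmetic in the exact setting: you use $\lfp{v}^2$ rather than $\lfp{v}$ as discriminator, build the testimony as the square $Q_j^2$ of a product of affine forms rather than the paper's product $\prod_{i\neq j}\|X-a_i\|^2/\|a_i-a_j\|^2$, and enforce distinct recovery by deflating the moments rather than by appending the linear constraint $W(c_i)=0$ to the SOS programme — all of which are equivalent here, though the paper's constraint-based variant is the one it later adapts to the approximate setting.
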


We've seen in section \secref{sec:rezstuff},  in particular from \ref{kor:moment_reduction},  that  with any polynomial $W$ of degree $ d - 2 $ we can  perform some sort of a ``reduction to a matrix'' \[
\sum_{k \in\{2,\ldots, d\}}\tensdecomp[k]{a} \mapsto \sum_{i=1}^m W(a_i) \lfp{a_i}^2
\]

The basic step in the following algorithm is to optimise for a nonnegative ``weight function'' $W$ which will allow us to compute the matrix\footnote{Technically, this is a quadratic form. As we did with tensors before, we aren't going to distinguish between symmetric matrices and quadratic forms for simplicity.} \[M := \sum_{i = 1}^m \lambda_i W(a_i) \lfp{a_i}^2\] We want to find a weight function which concentrates on one of the $a_i$: If the weight function (approximately) satisfies $W(a_i) = \case{\frac{1}{\lambda_i}}{if $i = j$}{0}{otherwise}$ for some $j\in [m]$, then $M$ will be (close to) the rank-$1$-matrix $\lfp{a_j}^2$ and we will be able to get $a_j$ out of $M$. In order to decide which component $a_j$ we are going to recover in a single step, we choose $v\in \Sn$ at random and then solve for the $a_j$ which maximises $\sca{a_i, v}$. Let's put this all together:\\

\begin{algorithm}[H]
	\caption{$v$-algorithm, first-read version, single component}
	\label{alg:exactDecomp_firstread}
	\begin{algorithmic}
		\STATE{$\mathbf{Choose}$ $v \in \Sn$ uniformly at random.}
		\STATE{$\mathbf{Solve}$ the optimisation problem \[
			\max {\sum_{i = 1}^m \lambda_i W(a_i) \sca{a_i, v}}
			\]}
		over all SOS polynomials $W \succeq 0$ satisfying $\deg(W) \leq d-2$ and $\sum_{i = 1}^m \lambda_i W(a_i) = 1$. (The starting point for the SOS solver may be chosen as the constant polynomial $W_0 = 1/\sum_{i = 1}^{m} \lambda_{i}$).\\
		\STATE{$\mathbf{Let }$ $W^\ast $ denote the output of this optimisation problem.}
		\STATE{$\mathbf{Compute}$ the matrix $M := \sum_{i = 1}^m \lambda_i W^\ast(a_i) a_i\transp{a_i}$ }
		\STATE{$\mathbf{Compute}$ an eigenvalue decomposition of $M$ with corresponding eigenvectors of \emph{unit length}.}
		\STATE{$\mathbf{Let }$ $u$ denote the computed eigenvector corresponding to the largest eigenvalue $\mu$.}
		\STATE{$\mathbf{Decide }$} the sign of $u$. To this end, compute the vector \[
			w := \sum_{i = 1}^m \lambda_i W^\ast(a_i) a_i \quad (\approx a_j)
		\]
		\STATE{$\mathbf{If }$  $\sgn(\sca{u, w}) < 0$ $\mathbf{replace}$ $u$ by $-u$.}
		\STATE{$\mathbf{Output }$ $\sqrt{\mu} u$. This vector will be precisely the $a_j$ maximizing $\sca{a_i, v}$}
		\STATE{$\mathbf{Output}$ $\frac{1}{W^\ast(\sqrt{\mu}u)}$. This scalar will be precisely  $\lambda_j$}
	\end{algorithmic}
\end{algorithm}
Now, this is in some sense an ``illicit'' formulation of an algorithm, since we wrote down the operations, functions and constraints involved in terms of the solution $\wlistdec$. To get an actually implementable algorithm (and thus get a proof of \thmref{thm:valgoex}), we need to show that all of these can be rewritten in terms of the input data $\alist[2m]{M}$. 

Let us first, though, review why this algorithm produces the correct result. To this end, we formulate correctness in the following theorem and prove it.

\begin{sat} \label{thm:correctness_ex}
	Let $W^\ast$ be an optimiser of 
	\begin{align}
		\max {\sum_{i = 1}^m \lambda_i W(a_i) \sca{a_i, v}}\quad (\ast)
	\end{align}
	over all SOS polynomials $W\in \polyspace{2m}$ satisfying 
	\begin{align}
		\sum_{i = 1}^m \lambda_i W(a_i) = 1
	\end{align}
	Then for $j = \argmax_{i\in [m]} \sca{a_i, v}$:
	\begin{align}
	\Wst(a_i) = \frac{1}{\lambda_i} \delta_{ij} \label{eq:deltapropertyforW}
	\end{align}
	and therefore $\sum_{i = 1}^m \lambda_i W^\ast(a_i) a_i\transp{a_i} = a_j\transp{a_j}$. From that we can compute $\pm a_j$ and $\lambda_j$.
\end{sat}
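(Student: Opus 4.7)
The plan is to recognize that the feasibility constraint together with the SOS condition turns the objective into an expectation against a probability distribution over the support $\{a_1,\ldots,a_m\}$, and then to verify that the linear objective is maximized precisely by the ``delta distribution'' concentrated on $a_j$. Concretely, since $W \succeq 0$ implies $W \geq 0$ on all of $\R^n$, each weight $\mu_i := \lambda_i W(a_i)$ is nonnegative, and the constraint $\sum_i \lambda_i W(a_i) = 1$ makes $(\mu_i)_{i=1}^m$ a probability vector. Therefore
\[
\sum_{i=1}^m \lambda_i W(a_i) \sca{a_i, v} \;=\; \sum_{i=1}^m \mu_i \sca{a_i, v} \;\leq\; \max_{i\in[m]} \sca{a_i, v} \;=\; \sca{a_j, v},
\]
with equality if and only if $\mu_i = 0$ whenever $\sca{a_i,v} < \sca{a_j,v}$. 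Because the $a_i$ are distinct, the set of $v\in\Sn$ for which the maximum is attained at two indices is contained in a finite union of hyperplanes, hence has measure zero; so with probability one the argmax $j$ is unique, and any optimiser $\Wst$ satisfying the maximum value must have $\mu_i = 0$ for $i\neq j$ and $\mu_j = 1$, i.e.\ \eqref{eq:deltapropertyforW}.

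The other half is to show this optimum is actually attained in the SOS cone of degree $\leq 2m-2$. For this I would exhibit an explicit feasible polynomial of ``delta type''. For each $i\neq j$ pick a vector $u_i\in\R^n$ with $\sca{a_j - a_i, u_i} \neq 0$ (possible because $a_j \neq a_i$), and set
\[
W(X) := c\prod_{i\neq j} \sca{X - a_i, u_i}^2, \qquad c := \left(\lambda_j \prod_{i\neq j}\sca{a_j - a_i, u_i}^2\right)^{-1}.
\]
This is a product of $m-1$ squared linear forms, so manifestly SOS of degree $2(m-1) \leq 2m$; one checks directly that $W(a_l) = 0$ for $l\neq j$ (because the factor indexed by $l$ vanishes) and $\lambda_j W(a_j) = 1$. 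Hence the supremum $\sca{a_j,v}$ is attained, and combining with the upper bound above pins down any optimiser to satisfy \eqref{eq:deltapropertyforW}.

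Once \eqref{eq:deltapropertyforW} is established, recovery of $a_j$ and $\lambda_j$ is essentially bookkeeping. Plugging into the definition of the matrix one obtains $M = \sum_i \lambda_i \Wst(a_i) a_i\transp{a_i} = a_j \transp{a_j}$, a rank-one psd matrix whose only nonzero eigenvalue is $\|a_j\|^2$ with (unit) eigenvector $\pm a_j/\|a_j\|$. The sign ambiguity is resolved by computing $w := \sum_i \lambda_i \Wst(a_i) a_i = a_j$ and comparing $\sgn\sca{u,w}$, and finally $\lambda_j$ is read off as $1/\Wst(a_j)$ from the identity $\lambda_j \Wst(a_j) = 1$.

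The main conceptual obstacle is the feasibility step: one must argue that the SOS cone is rich enough to realise a ``delta at $a_j$'' behaviour on the finite set $\{a_1,\ldots,a_m\}$ within the prescribed degree budget. The explicit interpolation polynomial above handles this, and in fact the degree $2(m-1)$ it produces is exactly what motivates the hypothesis $d \geq 2m$ in \thmref{thm:valgoex}. A secondary subtlety, as noted, is the genericity of $v$; this is harmless since the algorithm draws $v$ uniformly from $\Sn$, but it is worth recording as a ``probability one'' caveat. Rewriting the optimiser-defining data (the values $W(a_i)$, the matrix $M$, the vector $w$ and the weight recovery) in terms of the inputs $\moment_0,\ldots,\moment_{2m}$ is then a direct application of \corref{kor:moment_reduction}, turning this correctness proof into an actual SOS algorithm as claimed.
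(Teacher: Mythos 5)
Your proof is correct and follows essentially the same route as the paper's: you derive the same upper bound $\sca{a_j,v}$ by reading $(\lambda_i W(a_i))_i$ as a probability vector over the nodes, you exhibit an explicit feasible SOS testimony achieving it, and you conclude that any optimiser must concentrate on $a_j$, followed by the same rank-one recovery of $a_j$ and $\lambda_j$. The only variation is in the testimony: the paper takes the interpolation polynomial $\frac{1}{\lambda_j}\prod_{i\neq j}\frac{\|X-a_i\|^2}{\|a_i-a_j\|^2}$ (a product of squared norms), whereas you take $c\prod_{i\neq j}\sca{X-a_i,u_i}^2$ with separating vectors $u_i$ (a product of single squared affine-linear forms); both are SOS of degree $2(m-1)$ that vanish on every node except $a_j$, so they play an identical role, with the paper's choice being canonical (no auxiliary $u_i$ to pick) and yours marginally leaner (one square per factor instead of a sum of $n$).
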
  

	\begin{proof}
	Note that since we are optimizing over SOS polynomials of degree  $d-2\geq 2m-2$, the feasible space contains the \emph{interpolation polynomials} \[
	\mathcal{I}_j = \frac{1}{\lambda_i} \prod_{i\neq j} \frac{\|X-a_i\|^2}{\|a_i-a_j\|^2} 
	\] 
	which satisfy \begin{align}\label{eq:deltaproperty}
	\mathcal{I}_j(a_i) = \case{\frac{1}{\lambda_i}}{if $i = j$}{0}{otherwise}
	\end{align}
	Our claim is that any optimal solution $W^\ast$ must satisfy the property in (\ref{eq:deltaproperty}) as well for $j = \argmax_{i\in [m]} \sca{a_i, v}$.
	(Note that with probability one over the choice of $v$, the values $\sca{a_i, v}$ will all be distinct. Hence there will be only one maximiser $j$ which justifies the usage of the $\argmax$ function.) Indeed, we see that for any feasible $W$, since $W$ is a sum of squares, we have $\lambda_i W(a_i) \geq 0$ and thus
	\begin{align}
	\sum_{i = 1}^m \lambda_i W(a_i) \sca{a_i, v} 
	\leq  \big(\underbrace{\sum_{i = 1}^m \lambda_i W(a_i)}_{= 1}\big) \sca{a_j, v}
	= \sca{a_j, v} \label{eq:vupperbound}
	\end{align}
	where strict inequality holds if the term $\lambda_j W(a_j)$ is strictly smaller than $1$.\footnote{Here we use that the values $\sca{a_i, v}$ are all distinct, whence there is precisely one maximum value.} Since $I_j$ attains the upper bound (\ref{eq:vupperbound}), $W^\ast$ must, too, and by the above we get $\lambda_i W^\ast(a_i) = \delta_{ij}$ as claimed. 
	Hence the matrix $M$ is precisely \[
	M = a_j\transp{a_j}
	\]
	Any eigenvector of $M$ with nonzero eigenvalue is contained in $\im(M)$ and therefore a multiple of $a_j$. Since $u$ has unit length, $u = \pm \frac{a_i}{\|a_i\|}$. It remains to recover $\|a_j\|$ and the sign. Note that for the corresponding eigenvalue $\mu$ we know \[
	\mu u = Mu = a_j\transp{a_j}\left(\pm \frac{a_j}{\|a_j\|}\right) = \pm\|a_j\|a_j  = \|a_j\|^2 u 
	\]
	Hence $\mu = \|a_j\|^2$ and thus $\sqrt{\mu}u$ will be either $a_j$ or $-a_j$. It remains to show that the sign is correct. Note that \begin{align}
		w = \sum_{i = 1}^m \lambda_i W^\ast(a_i) a_i = a_j \label{eq:vector_reduction}
	\end{align}
	will also be equal to $a_j$. Hence $\sgn(\sca{u, w}) = 1$ if and only if $\sqrt{\mu}u = a_j$ and $\sgn(\sca{u, w}) = \sgn(-\|a_j\|^2) = -1$ otherwise. Now since we recovered $a_j = \sqrt{\mu}u$, we might plug it into the identity $\lambda_j W^\ast(a_j) = 1$ to recover $\lambda_j = \frac{1}{W^\ast(\sqrt{\mu}u)}$ as well.
\end{proof}

\begin{remark}
	At this point the observant reader is probably questioning why we didn't just output $w$, which would have been much simpler. There is no actual reason for this except for the fact that reducing to a matrix will be slightly more natural for the case of even degree tensor decomposition in the next section: When we can not distinguish between $\pm a_i$ anyway, computing the matrix $a_i\transp{a_i}$ will save us from having to make a sign-decision.
	This decision is instead transferred to the computation of the top eigenvectors.\footnote{By that we mean the eigenvectors $u\in \Sn$ of unit length that correspond to the largest eigenvalue. For some matrices, e.g. for (real) symmetric matrices with $n$ distinct eigenvalues, there are precisely two top eigenvectors $\pm u$.} 
\end{remark}

\begin{remark} On invalid input (that is, input which does not fulfill the requirements),  weird things might happen:  If for instance the degree of  $W $ is too low,  then $M$ will likely not be a rank 1 matrix (but it might still be very close to one,  which is essentially  what we show  in \secref{sec:effdecomp}). And if for instance $\mu = \sum_{i= 1}^m\lambda_{i}\delta_{a_i}$ is a \emph{signed} measure (where the weights $\lambda_{i}\in\R\setminus\{0\}$ may be negative) then we can show by the same interpolation argument that the decomposition is unique for $d\geq 2m$ (assuming the $m$ components are distinct). However, we can in general \emph{not} compute this decomposition by the $v$-Algorithm: If one of the components has a negative weight $\lambda_{i} < 0$,   then the optimisation problem will immediately become unbounded and thus there will be no optimiser.  This  result will then also certify that $\mu$ is not a measure of finite support size (for otherwise we would have a bound on the optimal value). \\
Still, this property of the algorithm reveals an issue problematic for noise stability: If the moments of $\mu$ are disturbed by some small noise tensors, this might result in an unbounded problem, since e.g. an arbitrarily small but negative $0 \approx \lambda_{i} < 0$ might have an arbitrarily high impact on the optimal value of the SOS programme. 
This can be dealt with by introducing a ``complexity bound'' on $W$ implying e.g. an upper bound on $\norm[\textrm{F}]{W}$. Bounding the complexity of $W$ comes with the usual issues known from Machine Learning problems: Having the bound too small results in a sub-optimal solution (``underfitting''), while leaving it too high makes the algorithm more vulnerable to noise (``overfitting''). 
\end{remark}

\noindent
Let us now summarise what is left to do. We need to show that\ldots
\begin{enumerate}[(a)]
	\item The algorithm can be written down in a way that relies only on the input data.
	\item We can repeat the algorithm $m$ times in order to recover all components. In particular, we need a constraint which ensures that we will not recover the same component twice.
\end{enumerate}

We'll start by writing down the algorithm in rigid manner, showing (a) and (b) simultaneously (see Algorithm \ref{alg:exactDecomp}). 
Now we're ready to prove \thmref{thm:valgoex}, in particular that Algorithm \ref{alg:exactDecomp} produces  the correct results.

\begin{algorithm}[h]
	\caption{ $v$-algorithm, implementable version, all components} 
	\label{alg:exactDecomp}
	\textbf{Input: }Tensors $T_0 \in \hompolyspace{0}, \ldots, T_d\in \hompolyspace{d}$ \\
	\textbf{Output: } Vectors $\alist[m]{c}$ and weights $\alist[m]{\rho}\in\R_{> 0}$ satisfying $T_k = \sum_{i = 1}^m \rho_i \lfp{c_i}^k$ \\
	\textbf{Require: } $d$ even and there should exist a moment decomposition of the input with $m$ weighted components for some $m$ satisfying $d\geq 2m$.\\
	\textbf{Procedure: }
	\begin{algorithmic}[1]
		\REPEAT
			\STATE{$\mathbf{Choose}$ $v \in \Sn$ uniformly at random.}
			\STATE{$\mathbf{Solve}$ the SOS optimisation problem \[
				\max \rezprod{W \lfp{v}}{\sum_{k = 1}^d T_k} \quad (\ast)
				\]}
			over all polynomials $W \succeq 0$ satisfying $\deg(W) \leq d-2$ and $\rezprod{W }{\sum_{k = 0}^{d-2} T_k} = 1$. 
			\STATE{$\mathbf{Let }$ $W^\ast $ denote the output of this optimisation problem.} 
			\STATE{$\mathbf{Compute}$ the matrix / quad. form $M := \rezprod{W^\ast \tens \id}{\sum_{k = 2}^{d} T_k}$ }
			\STATE{$\mathbf{Compute}$ an eigenvalue decomposition of $M$ with corresponding eigenvectors of unit length.}
			\STATE{$\mathbf{Let }$ $u$ denote the computed eigenvector corresponding to the largest eigenvalue $\mu$.}
			\STATE{$\mathbf{Output }$ $c_i := \sqrt{\mu} u$} as the component and $\rho_i := \frac{1}{W^\ast(c_i)}$ as the weight.
			\STATE{$\mathbf{Add }$ the linear constraint $W(c_i) = 0$} to the SOS optimisation problem $(\ast)$. Increment $i$
		\UNTIL{the problem $(\ast)$ becomes infeasible}
	\end{algorithmic}
\end{algorithm}
\noindent
\begin{proof} (of \thmref{thm:valgoex}) Let us show first that the functions and constraints are indeed the same as before. Suppose the $T_k$ admit a simultaneous decomposition \[
T_k = \sum_{i = 1}^m \lambda_i \lfp{a_i}^k	
\] 
Let us fix this decomposition. By the results of \secref{sec:rezstuff}, in particular by Prop. \ref{pro:rezeval} and Cor. \ref{kor:moment_reduction}, we have that 
\begin{align}
\rezprod{W}{\sum_{k = 0}^{d-2} T_k} &= \sum_{i = 1}^m W(a_i) \nonumber \\
\rezprod{W \tens \id}{\sum_{k = 2}^{d} T_k} &= \sum_{i = 1}^m W(a_i) \sca{a_i, X}^2 \quad (= M) \label{eq:correctness_of_functions} \\
\text{and} \qquad \rezprod{W \lfp{v}}{\sum_{k = 2}^d T_k} &= \sum_{i = 1}^m W(a_i) \sca{a_i, v}\nonumber
\end{align}

Note that all of these (the objective function, the left side of the constraint and $M$) are linear functions in the variable $W$ and $W$ is the only variable, since $v$ is fixed and the $T_k$ are all given constant tensors. In particular, $(\ast) $ is indeed an SOS programme.

The equations \eqref{eq:correctness_of_functions} show that the first loop of the algorithm will indeed yield a component of the $T_k$. In fact, it'll yield the component $a_i$ which maximises $\sca{a_i, v}$. Note that the constraint $W(c_i) = \sum_{|\al| \leq d} W_\al c_i^\al = 0$ is linear in $W$, too (while $c_i$ is known at that time), and can thus be expressed by computing the powers $c_i^\al$ of $c_i$. \\
	
To see that we can recover all components, we claim that:
\begin{enumerate}[(i)]
	\item in each step we get a component that we didn't have before.
	\item the algorithm terminates after precisely $m$ steps.
\end{enumerate}
(i) is actually quite clear:  In the first step, since there are no restrictions of the kind $W(c_i) = 0$ yet, $W^\ast$ will concentrate on the component maximizing $\sca{a_i, v}$ over all $i\in[m]$ as we've seen before. 
Suppose that in the first $k$ steps we've got each time a distinct vector with corresponding weight, say $\rho_1 c_1,\ldots, \rho_k c_k$. By induction suppose that we have a matching:
\[
c_1 = a_{\sigma(1)},\:\ldots\:, c_k = a_{\sigma(k)}
\]
for some permutation $\sigma$ of $[m]$ (and likewise for the weights). Let $c_{k+1}$ denote the output of the k+1-st iteration which was subject to the constraints $W(c_1) = \ldots = W(c_k) = 0$. Now some of the terms in the estimation \eqref{eq:vupperbound} will vanish, allowing us to refine our  bound to: 
\begin{align}
\sum_{i = 1}^m \lambda_i W(a_i) \sca{a_i, v} 
= \sum_{\substack{i = 1,\\ a_i \notin \{\alist[k]{c}\}}}^m \lambda_i W(a_i) \sca{a_i, v} 
\leq \big(\underbrace{\sum_{i = 1}^m \lambda_i W(a_i)}_{= 1}\big) \max_{\substack{i \text{ s.t.}\\ a_i\notin \{\alist[k]{c}\} } } \sca{a_i, v}
\label{eq:vupperbound2}
\end{align}
which is again attained by the interpolation polynomial corresponding to the maximising $a_i\notin \{\alist[k]{c}\}$. By the same arguments  that were used in the proof of \thmref{thm:correctness_ex} we  can then again  reduce  our tensor  to a rank $1$ matrix  and see that we get a new component out of it (which is precisely the maximising $a_i\notin \{\alist[k]{c}\}$ and equal to $c_{k+1}$) together with the corresponding weight.\\ 
(ii): After  $m  $ iterations,   by our previous considerations,  we will have  added the constraints $W(a_1) = \ldots = W(a_m) = 0$.  But then the constraint $\sum_{i = 1}^m \lambda_i W(a_i) = 1$ of the  SOS problem becomes infeasible. 
\end{proof}

\begin{remark}
	Note that when choosing the decomposition $\alist[m]{a}$, it might seem like we had a freedom of choice. However, we get that the resulting output $\{c_1,\ldots, c_m\}$ is equal to $\{\alist[m]{a}\}$ independently of the choice of the $a_i$. This shows that there is in fact only \emph{one possible choice} for the $a_i$ (up to renumbering). Thus the algorithm provides us with a proof of uniqueness in this setting. 
\end{remark}

\begin{remark}
	Note that the algorithm would also work without resampling the random vector $v $ in every iteration (as long as $v$ is not orthogonal to any of the  components which is the case with probability 1).  However,  then the optimal value might get very low on the last iterations which is bad for several reasons that will become  slightly more obvious from section \secref{sec:effdecomp}.  
	Also, if one of the  components is the zero vector,  it will be recovered if all other remaining components attain a negative value of $\sca{a_i,v}$. In this case, $M $ will be the zero matrix.
\end{remark}

\newpage
\subsection{v-Algorithm for Tensor Decomposition}\label{sec:valgotd}

The algorithm developed in \secref{sec:valgoqf} can be adapted to work for tensor decomposition as well, that is, when only \emph{one} tensor $T$ of degree $d\in 2\N_{>2}$ (and rank $m\leq d/2$) is given. This is possible even though the $v$-algorithm makes explicit use of all the lower-degree-moments to write down the constraints and the objective function. 

To achieve this, we would need to \emph{generate} something alike lower degree moments of the $a_i$. But generating the actual  lower degree moments from the input $T$ has to be impossible, particularly since the $a_i$ will never be unique in the setting of even degree tensor decomposition. 
But we have a chance in generating some ``fake moments'' by abusing the input  ambiguity which we described in \ref{rem:input_ambiguity}: 
Suppose again for this section that \[
T = \tensdecomp[d]{a}
\]
is a tensor decomposition of $T$. Pick some random $w\in \Sn$  and compute the tensors 
\begin{align}
	T_k := \sum_{i = 1}^m \sca{a_i, w}^{d-k} \lfp{a_i}^k,\qquad k \in \{0,\ldots, d\}
\end{align}
by applying the linear maps $\linred{\lfp{w}^{d-k}}$ to $T$. Note that $T = T_d$.\\
Now, the $T_k$  aren't necessarily proper moments of the $ a_i $.  But in some sense,  they are \emph{fake moments}: 
Indeed,  write  $\lambda_i := \sca{a_i, w}^d $ and $b_i := \frac{a_i }  {\sca{a_i, w}}$.  Then we have
\begin{align}
	T_k = \sum_{i = 1}^m \lambda_i \sca{a_i, w}^{-k} \lfp{a_i}^k  = \sum_{i = 1}^m \lambda_i \lfp{b_i}^k 
\end{align}
From that we see that the tensors $T_k $ admit a simultaneous weighted  decomposition  with weights  $\lambda_i $ and components $b_i$. Once we have both the weights $\lambda_i$ and the components $b_i $,  we might get the true components $a_i $ out of them.  Let's see what will happen if we plug those fake moments into the $v$–algorithm.  

Algorithm \ref{alg:exactTensorDecomp}  shows a version of the $v$–algorithm  that has been ``hacked''  in such a way that whenever it gets fed  with the fake moments described above it outputs the correct components  up to a factor of  $\pm 1$.  We do not have to prove much here  since most of the work has already been done in the previous section.  In particular,  we know that $(\lambda_i, b_i)$ in line 8 will be one component-weight pair of the unique solution to the moment decomposition problem 
\begin{align}
T_k = \sum_{i = 1}^m \lambda_i \lfp{b_i}^k 
\end{align}
that we described above. Since we know uniqueness, there is no other possibility than \[
\lambda_i = \sca{a_j,w}^d 
\]
and $b_i = \frac{a_j}{\sca{a_j,v}}$ for some $a_j$. Furthermore, this $a_j$ will be exactly the one maximising the ratio $\frac{\sca{a_j,v}}{\sca{a_j,w}}$, since the objective function satisfies \[
\sum_{i=1}^m \lambda_i W(b_i) \sca{b_i,v} = \sum_{i = 1}^m \lambda_i W(b_i) \frac{\sca{a_i,v}}{\sca{a_i,w}}  \leq \left(\sum_{i = 1}^m \lambda_i W(b_i)\right)\frac{\sca{a_j,v}}{\sca{a_j,w}} = \frac{\sca{a_j,v}}{\sca{a_j,w}}
\] 
for $j = \argmax_{i\in [m]} \frac{\sca{a_j,v}}{\sca{a_j,w}}$. Note that we used again that the values $\frac{\sca{a_j,v}}{\sca{a_j,w}}$ are distinct and well-defined due to randomness of $w$. Actually, $v$ does not necessarily need to be random any more, since $w$ already is: The only thing we have to ensure is that $v\neq w$. (However, low correlation between $v$ and $w$ is of course preferable, e.g. $v\perp w$).

\begin{algorithm}[H]
	\caption{ $v$-algorithm for tensor decomposition} 
	\label{alg:exactTensorDecomp}
	\textbf{Input: } A single tensor $T \in \hompolyspace{d}$ \\
	\textbf{Output: } Vectors $\alist[m]{c}$ satisfying $T = \sum_{i = 1}^m \lfp{c_i}^d$\\ 
	\textbf{Require: } $d$ even and there should exist a decomposition of $T$  with $m$ components for some $m$ satisfying $d\geq 2m$.\\
	\textbf{Procedure: }
	\begin{algorithmic}[1]
		\REPEAT
		\STATE{$\mathbf{Choose}$ $w \in \Sn$ uniformly at random and then $v\in \Sn$ from the uniform distribution conditioned on $v\perp w$.}
		\STATE{$\mathbf{Generate }$ the ``fake moments'' \[
		T_k = \linred{\lfp{w}^{d-k}}(T)	
		\] for $k \in\{0,\ldots, d\}$.}
		\STATE{$\mathbf{Solve}$ the SOS optimisation problem}\[
			\max \rezprod{W \lfp{v}}{\sum_{k = 1}^d T_k} \quad (\ast)
			\]
		over all polynomials $W \succeq 0$ satisfying $\deg(W) \leq d-2$ and $\rezprod{W }{\sum_{k = 0}^{d-2} T_k} = 1$ and $W(\frac{c_i}{\sca{c_i,w}}) = 0$ for all $c_i$ that have already been recovered.
		\STATE{$\mathbf{Let }$ $W^\ast $ denote the output of this optimisation problem.} 
		\STATE{$\mathbf{Compute}$ the matrix / quad. form $M := \rezprod{W^\ast \tens \id}{\sum_{k = 2}^{d} T_k}$ }
		\STATE{$\mathbf{Compute}$ an eigenvalue decomposition of $M$ with corresponding eigenvectors of unit length.}
		\STATE{$\mathbf{Let }$ $u$ denote the computed eigenvector corresponding to the largest eigenvalue~$\mu$.}
		\STATE{$\mathbf{Let }$ $b_i := \sqrt{\mu} u$} and $\lambda_i := \frac{1}{W^\ast(b_i)}$. 
		\STATE{$\mathbf{Output }$ $c_i := \sqrt[d]{\lambda_i} b_i$} as one of the components (up to $\pm 1$).
		\UNTIL{the problem $(\ast)$ becomes infeasible}
	\end{algorithmic}
\end{algorithm}

\begin{remark} (Success probability in floating point systems)
	Note that the  $v$–algorithm has success probability $1$  even though it contains a randomised step which is the  choice of  $v $.  Therefore we can say that it is just a  ``mildly randomised'' algorithm contrary to the procedures in \cite{BKS15} and \cite{GM15},	where even a single  component recovery step  can have a very low success probability  $\ll 1$. However, the choice of $v$ will start to matter in the approximate setting of \secref{sec:effdecomp}. Though even here there is an issue  when performing this algorithm in a finite-accuracy floating point system:  We have used the argument that the values of the scalar products  $\sca{a_i,v}$ are all pairwise distinct.  Now  with a certain positive probability it could happen that  these values are indistinguishable up to the numerical accuracy. In this case,  our  recovery step could fail  and we would have to repeat it.  Now what is the probability for this event? Let us consider the set \[
	\mathcal{E}_{ij} = \{v\in \Sn \mid |\sca{a_i-a_j, v}| > \epsilon\} 
	\]
	which corresponds to the good event that two  values of the scalar products  are distinguishable up to $\epsilon$.
	Geometrically spoken,   this set can be imagined as  the intersection of the  two half spaces  $\{v\in \R^n \mid \pm \sca{a_i-a_j, v} > \epsilon\}$ 
	with the  $n $– dimensional unit sphere. It can be shown that the probability to hit  $\mathcal{E}_{ij}$ with the choice of  $v$ is $\frac{2}{\pi}\arccos(\frac{\epsilon}{\norm{a_i-a_j}})$ 
	and that $\arccos(x) \to~\frac{\pi}{2} \quad (x\to~0)$ with convergence order $\landaueq{x}$. The probability that not all the values are pairwise distinct is then by the union bound $\leq m \big(1 - \frac{2}{\pi} \arccos(\frac{\epsilon}{\min_{i\neq j} \norm{a_i-a_j}})\big)$. Thus recovery works if $\epsilon$ is several orders of magnitude smaller than $\norm{a_i-a_j}$ and $m$ is reasonable. 
\end{remark}

\subsection{Qualitative Aspects}\label{sec:qualitatives}
This  short section is dedicated to  the discussion of special cases as well as some ideas,  optimisations and alternative approaches that we will not elaborate in full detail. Let us start with  some special situations where exact recovery is possible from less data  than $ d =  2 m $. In particular,  we want to point out the connection between the  $v $–algorithm and Jennrich's algorithm.

\paragraph{Orthogonal Components and Connection to Jennrich's Algorithm}\label{sec:orthsos}

The above algorithm implies that there is only one quadrature formula fulfilling the requirements and thus gives a proof of uniqueness in this setting. However, note that knowing $d \geq 2m$ moments is quite a heavy requirement. 

In special instances, the $v$-algorithm might achieve exactness for way smaller values of $d$. As a friendly example, consider the tensor \[
T = \sum_{i = 1}^m \sca{e_i, X}^d
\] 
where $e_i\in\R^n$ denotes the $i-$th standard basis vector. On the set $\{e_1, \ldots, e_n\}$, the polynomial $X_i$ evaluates the same way as the $0/1$ indicator function $\mathbb{1}_{e_i}$. Well, $X_i$ is no SOS polynomial, but $X_i^2$ is and from that we get that the $v-$algorithm will recover the exact solution even for $d = 4$. A more detailed analysis shows that the case $d = 3$ can also be made to work: The objective function
\begin{algorithmic}
	\STATE{ \[
		\max {\sum_{i = 1}^m W(a_i) \sca{a_i, v}}
		\]}
\end{algorithmic}
can of course be written down  with a degree-2+1 polynomial $W\lfp{v}$ on the left hand side of Reznick's inner product. However, since $W$ consumes two degrees, we will not have enough data to compute the matrix $M$. But we can still do the ``vector reduction'' outlined in Equation \eqref{eq:vector_reduction}. In the case $a_i = e_i$, we may now use $W = X_i^2$ as a testimony to show that this adapted  $v-$algorithm can recover the components of $T$ exactly even for $d = 3$. More generally, if the $a_i$ are linearly independent, then there exists an invertible matrix $S$ such that $Sa_i = e_i$. Hence $\tilde{W} := W(SX)$ testimonies that the $v$-algorithm succeeds for the linearly independent case.

Its theoretical properties in this setting correspond precisely to those of Jennrich's algorithm, where we needed the moments of degree $2$ and $3$ (the second moment was used to transform the components to orthogonal ones). These moments are precisely the ones that the $v-$algorithm needs in this setting: Note that our testimony $W$ is homogeneous of degree $2$, whence we do not need the moments of degree $0$ and $1$. Also, it seems that the lower-order moment generation procedure isn't able to generate the second order moment: 
This is due to the fact that the weights $\lambda_i = \sca{a_i,v}^d$ have to be positive for which we get a guarantee only if $d $ is even.  This analysis demonstrates that the $v$-algorithm can indeed be seen  as a generalisation of Jennrich's algorithm to the overcomplete setting.

\paragraph{Using Symmetry} Another friendly example are the vertices of a regular simplex  with edge length  $r$ in  $n$ dimensions.  Here $\norm{a_j-a_i}$ attains only two values:   $0$ and $r$.  Therefore we can choose a univariate SOS  interpolation polynomial\footnote{E.g. $p = (1-\frac{1}{r}\Lambda)^2$} $p\in\R[\Lambda] $ of degree  $2$ with $p(0) = 1, p(r) = 0$ and we see that $p(\norm{X~-~a_j}^2)$ testimonies that exact recovery is possible already from the moments of degree less or equal $4$, independent of $n$. It's one of the beautiful  properties of this algorithm that it can exploit such symmetry with seemingly no additional effort.\footnote{Note that tensor decomposition over the simplex is relatively easy:  Since there are only  $n+1$ points $\alist[n+1]{a}$ in question,  we can make the ansatz $T_k = \sum_{i=1}^m \lambda_i \lfp{a_i}^k$ and just search for the weights via linear least squares. This shall thus just be seen as an illustrative example. Note further that the latter algorithm needs a priori knowledge on where the components are situated whereas the $v$-algorithm does not.}

\paragraph{Components with Differing Lengths and Discriminatory Polynomials}

An obvious downside that comes with  the randomness is that we will not get the components in any particular order.  Depending on the choice of $v $,  they could get recovered in any  order,  with the  odds slightly favouring  components  of high norm  $\|a_i\| $ to come first.\footnote{Note that the actual chance  for a component  to become $\argmax_i \sca{a_i, v}$  depends not only on the norm, but also on the geometry  of the $a_i$:  Isolated  components are more likely to be picked than components  which  have a ``neighbour'' being highly correlated with them.} 

However,  there are deterministic variants of the algorithm working for certain special cases, of which we want to present one. Suppose the  $a_i $ all have different lengths. Now, what we could  try  to do in the case of moment decomposition is to replace the objective function $(\ast)$ by
\begin{align}
\rezprod{W (X_1^2 + \ldots + X_n^2)}{\sum_{k = 2}^d T_k} = \sum_{i = 1}^m W(a_i) \|a_i\|^2 
\end{align}

Then we can again characterise the optimizing  weight functions like we did before: Indeed,  they will concentrate on the highest-norm component. Be careful: The requirement of distinct norms $\norm{a_i}$ is inherently necessary. Otherwise in general we will not get a rank $1$ matrix! (Note further that this will recover the \emph{highest norm} components first and not necessarily the \emph{highest magnitude} components). 

In the \emph{general} setting of tensor decomposition, it's possible (and it may or may not be useful) to replace  $\sca{b_i,v}$ by $\norm{b_i}^2 = \frac{\norm{a_i}^2}{\sca{a_i,w}^2}$,  since  sufficient randomness is already given by the denominator of $\frac{\norm{a_i}^2}{\sca{a_i,w}^2}$.  More generally, we can actually plug in any non-constant polynomial $ f $ of degree at most $2$ in place of $\sca{v, X}^2$,  since the differences $f(a_j) - f(a_i)$ will be non-zero with probability one (for moment decompositions this is also true as long as the polynomial $f$ is sampled from a continuous distribution. If we have some prior knowledge about the domain of the $a_i$, it might be possible to choose $f$ deterministically in a way that guarantees that the differences are always non-zero).   

The analysis  suggests that it makes sense to introduce the notion of \emph{discriminatory polynomials}, which are polynomials achieving high values of $\min_{i\neq j} f(a_j)-f(a_i)$ for the component $a_j$ maximising $f$.  Once we move to the setting of approximate recovery,  the choice of  $ f $ becomes crucial.  We will see that we need to make a choice which provides us  with a sufficiently big gap $\min_{i\neq j} f(a_j)-f(a_i)$.  While it can be shown that random choices achieve this gap with substantial probability,  finding good choices of  $ f $ could be an interesting subject of further research. 

\paragraph{Canonical Procedures for Lower-Degree Moment Generation}\label{sec:can_proc_momgen}
In the setting of tensor decomposition, where we do not have  the lower degree moments,   a common constraint is to require that all components have unit length, i.e.  $\norm{a_i} = 1$.
The  lengths are then either put into some weights $\lambda_i$,  which  yields an equivalent formulation,  or the weights are still assumed  to be one,  which  results in a proper special case and a slightly simplified problem. (For some problems  related to tensor decomposition this is justified,   see  for example \cite{BKS15}).  This implies that for each \emph{even} $ k $ we can generate
\begin{align}
T_k = \sum_{i = 1}^m \lfp{a_i}^k = \sum_{i = 1}^m \|a_i\|^{d-k} \lfp{a_i}^k = \rezprod{(X_1^2 + \ldots + X_n^2)^{(d-k)/2} \tens \id}{T}
\end{align}
which is  then the true moment of the  $a_i$. In the general case,  we can still compute the  even degree fake moments
\begin{align}
T_k = \sum_{i = 1}^m \|a_i\|^{d-k} \lfp{a_i}^k = \rezprod{(X_1^2 + \ldots + X_n^2)^{(d-k)/2} \tens \id}{T}\label{eq:normscaledfakemoments}
\end{align}
and feed them as input to the algorithm.  This will force a decision on the input ambiguity \ref{rem:input_ambiguity}: It will automatically ensure that the algorithm interprets the components as points of  unit length $\frac{a_i}{\norm{a_i}}$ and  the weights as $\lambda_i = \norm{a_i}^d$. 

Unfortunately, it's not clear what we can do in order to obtain the corresponding $ \|a_i\|^k$-scaled moments when $k$ is odd. 
It's very plausible though that there is no canonical choice here to make.  Note that having these fake moments for odd degree would automatically enable us to distinguish between $a_i$ and $-a_i$.  But these are indistinguishable in an even degree tensor!   Here, the moment generation procedure essentially makes a random,    noncanonical decision. One could ask though whether it's possible to generate \[
T_k = \sum_{i = 1}^m s_i^k \|a_i\|^{d-k} \lfp{a_i}^k
\]
for \emph{some} $s_i \in\{\pm 1\}$, which is a question we have to leave open. This  corresponds to the question  whether we  could potentially plug in $\sqrt{X_1^2 + \ldots + X_n^2}^k $ into \eqref{eq:normscaledfakemoments} in a way that makes sense. \\

Of course we could still try to run the algorithm  with only the even degree inputs.  As we've seen in section \ref{sec:rezstuff},  this means that we'd  essentially optimise over weight polynomials  with vanishing odd degree homogeneous parts.  If  in that way we get a solution vector $c $,  it might be an inferior approximation compared to the  vector  we'd have gotten  out of the $v $–algorithm.

However, note that there exist reasonable testimonies for this case, e.g. we could  concatenate some univariate interpolation polynomial $p$ with $\norm{a_j}^2\norm{X}^2 - \lfp{a_j}^2$: All monomials occurring in $p(\norm{a_j}^2\norm{X}^2 - \lfp{a_j}^2)$ have even degree.  The values of $\norm{a_j}^2\norm{a_i}^2 - \sca{a_j, a_i}^2$ for $i\neq j$ will be distinct from $0$ by Cauchy-Schwarz (as long as no other component $a_i$ is a multiple of $a_j$) and therefore we can  choose an interpolation polynomial $p$ such that $p(\norm{a_j}^2\norm{a_i}^2 - \sca{a_j, a_i}^2) = \delta_{ij}$. This shows that the approach of this paragraph is a valid alternative to the generation of randomised lower degree fake moments in \secref{sec:valgotd}.

\newpage
\section{Efficient Tensor Decomposition via SOS} \label{sec:effdecomp}

Throughout this  section,  let again 
\begin{align}
T_k = \weightdecomp[k]
\end{align} a weighted decomposition of the tensors  $T_0,\alist[d]{T} $. Let  also $v \in\Sn$ be random and reconsider the SOS optimisation problem
\begin{align}
\max &\rezprod{W \lfp{v}}{\sum_{k = 1}^d T_k} \quad  \nonumber \\
\text{over all } &W\in\polyspace{d-2},\: W \succeq 0 \text{ satisfying} \label{eq:sosprog}\\
&\rezprod{W }{\sum_{k = 0}^{d-2} T_k} = 1\nonumber
\end{align}

We've seen that in the setting $ d \geq  2 m $ the algorithm allows for a very clean analysis.  However,  having access to moments of that  high order is a very heavy requirement and results in very  bad running time:   Solving the  degree  $d $ SOS programmes involved will take our black box $n^{2m}$ time steps,  which is exponential  in  $m $.  The obvious next question to ask is what  guarantees we will get if we restrict to SOS programmes  which are polynomial–sized   in both $ m $ and $ n $. (This would be the case  if for instance  $ d $ depended logarithmically on $ m $).  In this   regime,  we can't of course  hope for  $W $ to vanish on all but one $ a_i $.  However,  we can try to search for a  weight polynomial which  attains \emph{very small   values} on most of the components  while being  of \emph{relatively high magnitude} on one particular component.

The  matrix  $M $ will then be only  very close to a rank  $1 $ matrix.  But as long as this approximation is good enough,  we will be able to show that the top  eigenvectors of  $M $ are still very close to one of the components.  To this end,  we will make use of the following lemma: 

\begin{lem}\label{lem:gappedmatrices} (\emph{Eigenvectors of  gapped matrices} - Adapted from Lemma A.3 in \cite{HSS16})\\
	Let  $M \in\R^{n\times n}$ be  a symmetric matrix and let  $a \in\R^n$ ,  $\gamma  >  0$ such that \[
	\spec{M-a\transp{a}} \leq \Mspec - \gamma \|a\|^2
	\]
	Then for each  top  eigenvector $u$ of  $M$  we have  $\sca{u,\frac{a}{\|a\|}}^2 \geq \gamma$.
\end{lem}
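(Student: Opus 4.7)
The plan is to exploit the variational characterisation of the spectral norm. Let $u\in\Sn$ be a top eigenvector of $M$, so $Mu = \lambda u$ with $|\lambda| = \Mspec$. Because $M$ is symmetric, this gives the identity $|\transp{u}Mu| = \Mspec$, and in particular $\transp{u}Mu \in \{+\Mspec, -\Mspec\}$.

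Next I would evaluate the Rayleigh quotient of $M - a\transp{a}$ at $u$:
\[
\transp{u}(M - a\transp{a})u \;=\; \transp{u}Mu - \sca{u,a}^2.
\]
Since $|\transp{u}(M-a\transp{a})u| \leq \Mdiffspec$, the hypothesis of the lemma gives
\[
|\transp{u}Mu - \sca{u,a}^2| \;\leq\; \Mspec - \gamma\|a\|^2.
\]

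The main (and essentially only) step is now a case distinction on the sign of $\transp{u}Mu$. In the positive case $\transp{u}Mu = \Mspec$, the inequality above reads $\Mspec - \sca{u,a}^2 \leq \Mspec - \gamma\|a\|^2$, hence $\sca{u,a}^2 \geq \gamma\|a\|^2$, which is exactly $\sca{u, a/\|a\|}^2 \geq \gamma$ (assuming $a\neq 0$; if $a=0$ the conclusion is vacuous by the convention used in the text). In the negative case $\transp{u}Mu = -\Mspec$, the same inequality becomes $\Mspec + \sca{u,a}^2 \leq \Mspec - \gamma\|a\|^2$, i.e.\ $\sca{u,a}^2 \leq -\gamma\|a\|^2$, which is impossible for $\gamma>0$ and $a\neq 0$; so this case cannot occur under the stated hypothesis. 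The only ``obstacle'' is really just keeping track of signs here, since the lemma is about the absolute value norm $\spec{\cdot}$ rather than the largest eigenvalue, and a priori the top eigenvalue of $M$ could be negative; the negative case is ruled out by the hypothesis itself, which implicitly forces $\transp{u}Mu \geq 0$ whenever $a \neq 0$.
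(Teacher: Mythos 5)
Your proof is correct and uses essentially the same core idea as the paper, namely evaluating the Rayleigh quotient of $M - a\transp{a}$ at a top eigenvector $u$ of $M$. The paper streamlines the argument by applying the reverse triangle inequality directly, $\spec{M-a\transp{a}} \geq |\sca{u,Mu}| - \sca{u,a}^2 = \Mspec - \sca{u,a}^2$, which avoids your explicit sign case distinction; your case analysis is nonetheless correct (in the positive case note that you implicitly use $\Mspec - \sca{u,a}^2 \leq |\Mspec - \sca{u,a}^2|$ before dropping the absolute value, which is fine).
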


\begin{proof} 
	Denote by $\mu_{\max}$ the  eigenvalue of $M$ of largest absolute value. Recall the characterisation\[
	\mu_{\max}^2 = \|M\|_{\text{spec}}^2 = \sca{Mu, Mu}
	\]
	for any  top eigenvector $u$ of $M$. Thus $\Mspec = \sup_{x\in \Sn} |\sca{x,Mx}|$. Let $u$ denote a top eigenvector of $M$. In particular, $\Mspec = |\sca{u,Mu}|$ and therefore
	\[
	\spec{M-a\transp{a}} \geq |\sca{u, (M-a\transp{a})u}|\geq |\sca{u, Mu}| - \sca{u,a}^2 = \Mspec - \sca{u,a}^2
	\]
	Hence $\Mspec - \sca{u,a}^2 \leq \Mspec - \gamma \|a\|^2$, implying $\gamma \|a\|^2 \leq \sca{u,a}^2 $.
\end{proof}

This  will give us a guarantee that the top eigenvector is close to the true  component $a_j$  as long as the $W$-weighted Matrix $M $  is sufficiently close to a rank $1$ matrix. Now we are interested  in the question how close the matrix $M $ can actually become to a rank 1 matrix for a  given value of  $d $. From the preceding discussion  it should be very clear that there are two essential  issues to answer  here:  
\begin{enumerate}
	\item How close can a degree $d~<~2 m $ polynomial come to one of the 0/1 Kronecker delta  functions $\delta_{ij}$ on the set $K~:=~\{ a_1, \ldots,  a_m\} $?
	\item How close will the optimiser $\Wst$ come to one of the $\delta_{ij}$?
\end{enumerate}
The first question asks for the best testimony in the feasible space,  while the second question asks how reliably the optimisation solves for a polynomial which has similar properties as the testimony.
In the exact case,  recall that the optimiser and the testimony (which was an interpolation polynomial) were equally good for our purposes. In the approximate setup,  this will not be the case anymore:  Our results will depend on the choice of $v $,  so we have to make a good choice. Let us first address though the testimony problem: 

\subsection{Finding a Testimony}\label{sec:testimony}

\paragraph*{Univariate Chebyshev Polynomials} 
During this paragraph,  let us  switch to the univariate case, where $\Lambda$  is a scalar algebraic unknown. In this case, there is already a lot of common knowledge about similar optimisation problems that will help us.
\begin{defi}
	The degree-$d$ polynomials $\mathcal{T}_d$ uniquely determined by \begin{align}
	\forall \vartheta\in\R :\quad \mathcal{T}_d(\cos(\vartheta)) = \cos(d \vartheta)
	\end{align}
	are well-known  as  the  Chebyshev polynomials.
\end{defi} 

These polynomials have several characterising  extremal properties one of which is shown in the following lemma.

\begin{lem} \label{lem:chebys_extrprop}
	Let $ x \in\R$  not  in the interval $[-1,1]$.  Consider the optimisation problem 
	\begin{align}
		&\max p(x) \nonumber\\
		\text{subject to }\quad |&p| \leq 1 \:\text{ on } [-1,1]\\
		&p\in \R[\Lambda]_{\leq d}\nonumber
	\end{align}
	The unique optimiser  to this problem is the Chebyshev polynomial  $\mathcal{T}_d$.  
\end{lem}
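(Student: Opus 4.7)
The plan is to invoke the classical Chebyshev equioscillation argument. First I verify that $\mathcal{T}_d$ is feasible: the defining identity $\mathcal{T}_d(\cos\vartheta) = \cos(d\vartheta)$ gives $|\mathcal{T}_d| \leq 1$ on $[-1,1]$ immediately. The key structural fact I rely on is that at the $d+1$ Chebyshev extremal nodes $\xi_k := \cos(k\pi/d)$ for $k = 0,1,\ldots,d$, the polynomial $\mathcal{T}_d$ equioscillates: $\mathcal{T}_d(\xi_k) = \cos(k\pi) = (-1)^k$. Moreover, $\mathcal{T}_d'(\xi_k) = 0$ for $0 < k < d$, since differentiating $\mathcal{T}_d(\cos\vartheta) = \cos(d\vartheta)$ yields $\sin(\vartheta)\,\mathcal{T}_d'(\cos\vartheta) = d\sin(d\vartheta)$, and the right-hand side vanishes at $\vartheta = k\pi/d$ while $\sin(k\pi/d) \neq 0$ for interior $k$.

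I would then assume for contradiction that there exists a feasible $p \in \R[\Lambda]_{\leq d}$ with $p \neq \mathcal{T}_d$ and $p(x) \geq \mathcal{T}_d(x)$; without loss of generality $x > 1$ (the case $x < -1$ is handled by the same argument applied to $\pm \mathcal{T}_d$ according to parity of $d$). Consider the difference polynomial $q := \mathcal{T}_d - p \in \R[\Lambda]_{\leq d}$. At each node $\xi_k$, the inequality $|p(\xi_k)| \leq 1$ forces $(-1)^k q(\xi_k) = 1 - (-1)^k p(\xi_k) \geq 0$, so the sequence $(q(\xi_k))_{k=0}^d$ alternates sign weakly in the pattern of $((-1)^k)_k$.

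The central step is a zero-counting argument showing that $q$ has at least $d$ zeros in $[-1,1]$ counted with multiplicity. On every subinterval $[\xi_{k+1},\xi_k]$ where both endpoints give strictly alternating nonzero values, the intermediate value theorem provides a zero in the open interval. Whenever $q(\xi_k) = 0$ at an interior node, $p(\xi_k) = (-1)^k = \pm 1$ is an interior extremum of $p$ on $[-1,1]$, so $p'(\xi_k) = 0$; combined with $\mathcal{T}_d'(\xi_k) = 0$, this yields $q'(\xi_k) = 0$, i.e.\ $\xi_k$ is a zero of multiplicity at least $2$. A careful bookkeeping, breaking the sequence into maximal blocks of strictly alternating nonzero values separated by zero entries, shows that the total zero count in $[-1,1]$ (with multiplicity) is at least $d$.

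To conclude, I exploit the extra constraint at $x$. Because $q(\xi_0)\leq 0$ (since $\xi_0 = 1$ and $(-1)^0 = 1$) while by assumption $q(x) = \mathcal{T}_d(x) - p(x) \leq 0$ for $x > 1$, and the $d$ zeros in $[-1,1]$ account for all sign changes allowed by $\deg q \leq d$, a standard leading-coefficient comparison shows that an additional zero must appear: if $p(x) > \mathcal{T}_d(x)$ strictly, tracking signs from $\xi_0$ through $x$ and then to $+\infty$ (where the leading term dominates) forces a zero beyond $\xi_0$, giving $q$ more than $d$ zeros, hence $q \equiv 0$—contradicting $p \neq \mathcal{T}_d$. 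If $p(x) = \mathcal{T}_d(x)$, then $x$ itself is a $(d+1)$-st zero of $q$, giving the same contradiction and establishing uniqueness. The main obstacle is the multiplicity bookkeeping in the weak-alternation case, which is why the interior-extremum derivative computation for both $\mathcal{T}_d$ and $p$ is essential to the argument.
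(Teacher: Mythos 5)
Your argument is correct but takes a genuinely different route from the paper's. The paper proves the lemma via Lagrange interpolation: it writes $p(x)$ as a Lagrange combination of the values $p(t_i)$ at $d+1$ nodes, uses $|p(t_i)|\leq 1$ to obtain an upper bound, and observes that the bound is attained exactly when $p$ equioscillates between $\pm 1$ at those nodes, which then forces the node system and hence $p=\mathcal{T}_d$. You instead run the classical zero-counting (equioscillation) argument: assume a feasible $p\neq\mathcal{T}_d$ with $p(x)\geq\mathcal{T}_d(x)$, examine $q:=\mathcal{T}_d-p$ at the Chebyshev extremal nodes $\xi_k=\cos(k\pi/d)$, show that the weak alternation $(-1)^kq(\xi_k)\geq 0$ together with vanishing derivatives at interior nodes forces $q$ to have $d$ zeros (with multiplicity) in $[-1,1]$, and then extract a $(d+1)$-st zero from the extra constraint at $x$, whence $q\equiv 0$. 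The paper's argument is more constructive and exhibits the Chebyshev polynomial as attaining the bound; yours is by contradiction, gives a self-contained uniqueness proof, and makes the role of interior criticality explicit, at the cost of the multiplicity bookkeeping you yourself flag. Both are standard and valid.

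Two small points. You write $q(\xi_0)\leq 0$, but your own computation ($(-1)^0q(\xi_0)\geq 0$ with $\xi_0=1$) gives $q(\xi_0)\geq 0$, which is the inequality you actually need — combined with $q(x)\leq 0$ it supplies the intermediate-value/extra-zero step. Also, for $x<-1$ and $d$ odd, the polynomial $-\mathcal{T}_d$ is feasible and $-\mathcal{T}_d(x)>\mathcal{T}_d(x)$, so the unique optimiser is $(-1)^d\mathcal{T}_d$, not always $\mathcal{T}_d$; your remark about applying the argument ``to $\pm\mathcal{T}_d$ according to parity'' is the right instinct, but it means the lemma as literally stated only holds for $x>1$. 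The paper's own proof shares this gap (its closing sentence does not distinguish $\pm\mathcal{T}_d$), and the subsequent corollary silently restricts to even degree, where the symmetry $\mathcal{T}_d(-\kappa)=\mathcal{T}_d(\kappa)$ restores correctness.
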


\begin{proof} (taken from \cite{FP10})
	For any  $d + 1 $ values $\alist[d+1]{t}$  we can express  any $p\in\polyspace{d}$ by the interpolation formula\[
	p = \sum_{j = 1}^{d+1} p(t_i ) \prod_{i\neq j} \frac{ (\Lambda-t_i)}{(t_i - t_j)}
	\]
	Evaluating this formula in  $x $  and using that $|p(t_i)|\leq 1$ yields the bound  
	\begin{align}
	p(x) &= \sum_{j = 1}^{d+1} p(t_i ) \prod_{i\neq j} \frac{ (x-t_i)}{(t_i - t_j)} \\
	&\leq \sum_{j = 1}^{d+1} |p(t_i )| \prod_{i\neq j} \frac{ |x-t_i|}{|t_i - t_j|} \nonumber \\
	&\leq \sum_{j = 1}^{d+1} \: \prod_{i\neq j} \frac{ |x-t_i|}{|t_i - t_j|}
	\end{align}
	This bound is attained if and only if $p(t_i) = \sgn(u_i) $  for all $i\in [d+1] $ where $u_i := \prod_{i\neq j} \frac{ |x-t_i|}{|t_i - t_j|}$. Since  $ x \notin [-1,1]$, the terms $| x -t_i| $  have the same sign for all  $i $.  Together with the fact that  $t_1  < \ldots  <  t_{d+1} $ were sorted in ascending order,  we get $\sgn( u_{i+1})  =  (-1)\sgn(u_i)$,  that is, the signs of $p$ have to alternate  between  $1 $ and  $-1 $ on $ d + 1 $ values.  For a polynomial of degree  $d$ bounded by $1$,  this is only possible  if  $t_2, \ldots, t_d$  are the $d-1$  local maximisers  of $p$ and $t_1, t_{d+1}\in\{\pm 1\}$ are boundary maximisers of $p$ on $[-1,1]$.\\
	All $y\in [-1,1]$  may be represented as $y = \cos(\vartheta)$ for some $\vartheta\in[0, \pi)$.  Using the identity\[
	\mathcal{T}_d(\cos(\vartheta)) = \cos(d\vartheta)
	\]
	and the properties of the cosine function,  we see that over the course of one semi-period of $\cos(\vartheta)$ (that is, as $\vartheta$ increases from $0$ to $\pi$) the values of $\mathcal{T}_d$ perform $d$ semi-periods, attaining $d+1$ times a value of $\pm 1$. Hence the $d$–th Chebyshev polynomial has indeed this oscillating property and is thus the solution to this optimisation problem. 
\end{proof}

\begin{defi} 
	Let $\mathcal{I} = [a,b]$ an interval (where $a < b$). Consider the linear polynomial
	\[
	\psi := \frac{2}{b-a} X - \frac{b + a}{b-a}  
	\]
	mapping $[a,b]$ to  $[-1, 1] $ monotonically.  Then
	\begin{align}
		\chebyinterval := \mathcal{T}_d(\psi(X))
	\end{align}
	is called the $d$-th \emph{Chebyshev polynomial on $\mathcal{I}$}.
\end{defi}

\begin{kor}
	On the interval $\mathcal{I} = [a,b]$, where $b > a > 0$, 
	the Chebyshev polynomial $\chebyinterval$ on $\mathcal{I}$ 
	is the optimiser of 
	\begin{align}
	&\max p(0) \nonumber\\
	\text{subject to }\quad |&p(x)| \leq 1 \:\text{ on } \I\\
	&p\in \R[\Lambda]_{\leq d}\nonumber
	\end{align}
	The optimum value is \begin{align}
	\chebyinterval(0) = \chebystandard\left(\frac{b + a}{b-a}\right) = \frac{1}{2} \left(\kappa + \sqrt{\kappa^2 - 1}\right)^d + \frac{1}{2}\left(\kappa - \sqrt{\kappa^2 - 1}\right)^d \in\landauO{\kappa^d}
	\end{align}
	where we used the abbreviation $\kappa := \frac{b + a}{b-a}$. The optimum value grows exponentially with $d$. 
\end{kor}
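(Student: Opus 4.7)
The plan is to reduce the statement to the standard interval $[-1,1]$ via the affine change of variable $\psi$ and then invoke Lemma~\ref{lem:chebys_extrprop}. Composition with $\psi$ defines a linear bijection $\R[\Lambda]_{\leq d} \to \R[\Lambda]_{\leq d}$ that preserves degree, so every feasible $p$ can be written as $p = q \circ \psi$ with $q \in \R[\Lambda]_{\leq d}$. Under this change of variable, the constraint $|p| \leq 1$ on $\I = [a,b]$ translates to $|q| \leq 1$ on $\psi(\I) = [-1,1]$, while the objective becomes $p(0) = q(\psi(0)) = q(-\kappa)$, where $\kappa = \frac{b+a}{b-a}$.

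Since $b > a > 0$, we have $\kappa > 1$, so $-\kappa \notin [-1,1]$ and Lemma~\ref{lem:chebys_extrprop} applies at the point $-\kappa$ (up to the sign discussed below). Its unique optimizer in the $q$-picture is $\chebystandard$, so pulling back via $\psi$ the optimizer in the $p$-picture is $\chebystandard \circ \psi = \chebyinterval$, with optimum value $\chebystandard(-\kappa)$. By the parity relation $\chebystandard(-x) = (-1)^d \chebystandard(x)$ we have $\chebystandard(-\kappa) = \pm \chebystandard(\kappa)$; the intended optimizer is in either case $\pm \chebyinterval$, and the optimum value is $\chebystandard(\kappa)$, consistent with the maximisation (for odd $d$ this is a mild abuse of notation absorbed by replacing $\chebyinterval$ with $-\chebyinterval$ when needed).

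For the closed form, I would extend the defining identity $\chebystandard(\cos(\vartheta)) = \cos(d\vartheta)$ to the hyperbolic cosine, giving $\chebystandard(\cosh(t)) = \cosh(dt)$ for all $t \in \R$. Writing $\kappa = \cosh(t)$ with $t = \log\bigl(\kappa + \sqrt{\kappa^2-1}\bigr) > 0$, we have $e^{\pm t} = \kappa \pm \sqrt{\kappa^2-1}$, so
\[
\chebystandard(\kappa) = \cosh(dt) = \frac{1}{2}\Bigl(\bigl(\kappa + \sqrt{\kappa^2-1}\bigr)^d + \bigl(\kappa - \sqrt{\kappa^2-1}\bigr)^d\Bigr).
\]
Exponential growth in $d$ follows since $\kappa + \sqrt{\kappa^2-1} > 1$ (and bounded above by $2\kappa$), while $\kappa - \sqrt{\kappa^2-1} \in (0,1)$ stays bounded; hence $\chebystandard(\kappa) \in \landauO{\kappa^d}$.

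The hard part is really only bookkeeping: the extremal property is imported from Lemma~\ref{lem:chebys_extrprop}, and the closed form is a standard hyperbolic identity. The only technical subtlety is the sign ambiguity for odd $d$, which can be resolved by allowing the optimizer to be $\pm \chebyinterval$ and only claiming the value of the maximum without tracking the sign.
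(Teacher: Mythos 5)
Your proof is correct and follows essentially the same route as the paper: the paper's own proof simply asserts that optimality ``follows trivially'' from the preceding Chebyshev extremal lemma and quotes the closed form for $\chebystandard$ outside $[-1,1]$ as well-known, whereas you supply the explicit affine change of variables and the hyperbolic-cosine derivation of that formula. You are in fact slightly more careful than the paper in noting that $\psi(0) = -\kappa < -1$, so that for odd $d$ the optimizer is $-\chebyinterval$ rather than $\chebyinterval$ --- a sign issue the corollary statement glosses over, though it is immaterial to the rest of the paper, which always takes $d \equiv 2 \bmod 4$.
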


\begin{proof} The optimality property follows trivially from Lemma \ref{lem:chebys_extrprop}. For the explicit value of the optimum we used the well-known fact that \[\chebystandard(x) = \frac{1}{2} \left(x + \sqrt{x^2 - 1}\right)^d + \frac{1}{2}\left(x - \sqrt{x^2 - 1}\right)^d\] for $|x| > 1$ and that $\kappa = \frac{b + a}{b-a} > 1$ since $0 < a < b$.
\end{proof}

\paragraph{Multivariate Chebyshev Polynomials}  Now, from univariate Chebyshev polynomials it's just  a small step to  multivariate but rotation symmetric polynomials: 
\begin{pro}  
	Let $d\equiv 2 \mod 4$, $x\in\R^n $ and $q  : = \|X-x\|^2$. We have that 
	\begin{align}
	\chebyinterval[(d-2)/2](q) + 1
	\end{align}
	is  a rotation symmetric\footnote{that is, w.r.t rotations around $x$} SOS polynomial. 
\end{pro}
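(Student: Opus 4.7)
The plan is to reduce the statement to a univariate nonnegativity claim about $\chebyinterval[(d-2)/2]$ on all of $\R$, and then apply the classical univariate Hilbert theorem (mentioned in \secref{sec:sos_opt}) to obtain a sum-of-squares decomposition, which we then pull back along $q$.

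First, rotation symmetry around $x$ is immediate: the polynomial $q = \|X-x\|^2$ is invariant under any orthogonal transformation of $X - x$, so any univariate polynomial expression in $q$ inherits this symmetry. This disposes of the first part of the claim and does not use the hypothesis $d\equiv 2\pmod 4$.

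Next, set $k := (d-2)/2$. The hypothesis $d\equiv 2\pmod 4$ precisely says that $k$ is an even integer. I would show that the univariate polynomial
\[
P(\Lambda) := \chebyinterval[k](\Lambda) + 1 \in\R[\Lambda]
\]
is globally nonnegative on $\R$. Recall $\chebyinterval[k](\Lambda) = \mathcal{T}_k(\psi(\Lambda))$ with $\psi$ the affine map sending $\mathcal{I}=[a,b]$ bijectively onto $[-1,1]$. Split into three cases: for $\Lambda\in[a,b]$, $\psi(\Lambda)\in[-1,1]$ and so $|\mathcal{T}_k(\psi(\Lambda))|\leq 1$ by the defining cosine identity, giving $P(\Lambda)\geq 0$; for $\Lambda > b$, $\psi(\Lambda) > 1$ and $\mathcal{T}_k$ is monotonically increasing past $1$ with $\mathcal{T}_k(1) = 1$, so $P(\Lambda)\geq 2$; for $\Lambda < a$, $\psi(\Lambda) < -1$, and here I would use the evenness of $k$ (which implies $\mathcal{T}_k(-y) = \mathcal{T}_k(y)$) to reduce to the previous case and again conclude $P(\Lambda)\geq 2$. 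This step is where the parity hypothesis $d\equiv 2\pmod 4$ is used in an essential way, and I anticipate it being the only slightly tricky point of the argument.

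Having established $P\geq 0$ on $\R$, the classical univariate result (every nonnegative univariate real polynomial is a sum of two squares, cf.\ 2.1.1 in \cite{RAG}, referenced in \secref{sec:sos_opt}) furnishes $p_1,p_2\in\R[\Lambda]$ with $P(\Lambda) = p_1(\Lambda)^2 + p_2(\Lambda)^2$. Substituting $\Lambda \mapsto q = \|X-x\|^2$ yields
\[
\chebyinterval[k](q) + 1 = p_1(q)^2 + p_2(q)^2,
\]
where $p_i(q)\in\R[X]$. Hence the polynomial is SOS. As a sanity check, the degree is $2k = d-2$, matching the bound we need for use as a weight polynomial $W$ in the SOS programme \eqref{eq:sosprog}.
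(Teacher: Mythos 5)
Your proof is correct and takes essentially the same route as the paper: establish global nonnegativity of the univariate shifted Chebyshev polynomial $\chebyinterval[(d-2)/2] + 1$, invoke the univariate sum-of-squares theorem, and substitute $q$. The only cosmetic difference is that you carry out a careful three-case analysis for the nonnegativity, whereas the paper appeals more tersely to the facts that all local minima of $\chebyinterval[(d-2)/2]$ equal $-1$ and that the polynomial has even degree; both are just different packagings of the same parity hypothesis $d\equiv 2 \bmod 4$.
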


\begin{proof}  Since $ d \equiv 2\mod 4 $ we know that $\chebyinterval[(d-2)/2]$ is an even degree polynomial. At any of its local minima it attains the value $-1$. Therefore, $\chebyinterval[(d-2)/2]+1$ is a nonnegative polynomial in one variable – hence a sum of squares. The concatenation with another SOS polynomial $q$ yields of course again an SOS polynomial.  Symmetry under rotations around  $x $ is clear.
\end{proof}

Note that this polynomial has a local maximum at $X = x$. It attains high values  on $y$ when  $\norm{y-x}$ is either very small or very big. For moderately sized $\norm{y-x}^2\in [a,b] = \I$ it will attain low values. Therefore, such a polynomial  does just what we want if $ x = a_j $ and $ \norm{a_j-a_i}\in[a,b]$ for all $i\neq j$. We could thus take \[
[a,b] = [\kmin,\kmax]
\]
(with $\kmin,\kmax$ as defined in \ref{not:cond_params}), but it might be wise to be cautious and leave some extra space,  e.g. to take \[
[a,b] = [\frac{1}{2}\kmin - \delta, 2\kmax + \delta]
\]
for some small $\delta \geq 0$.\footnote{We will need this $ \delta$-wide safety margin in section \secref{sec:all_comps}. The factor of 2 is needed due to an unfortunate suboptimality of the SOS proof we will give there.} Let us therefor 
write \[
W_{j, \delta}  := \mathcal{T}_{[\kmin - \delta,\: \kmax + \delta],(d-2)/2}(\norm{X - a_j}^2) + 1
\]
These SOS polynomials aren't all members of the feasible space,   since in general they will not fulfill\[
\sum_{i=1}^m W_{j, \delta}(a_i) =1
\]
Of course this can be fixed by a simple rescaling,  but we do not know the exact scaling factor. Let $W_{j, \delta,\text{test}}$  denote the correctly rescaled version of $W_{j, \delta}$.  What we're actually interested in is \[
\wminmaxtest := \wminmaxtest^{j,\delta} := \supnormI{W_{j, \delta,\text{test}}} := \sup_{\norm{x-a_j}^2\in \mathcal{I}} W_{j, \delta,\text{test}}(x)
\]
The following proposition deals with the unknown scaling factor by estimating $\wminmaxtest$. Let us in the following suppress $j $ and $\delta$ (and sometimes also $\I$) in the notation.

\begin{pro} \label{pro:guarantees_testimony}
	Let $d\equiv 2 \mod 4$. Fix some $j\in[m]$. For the polynomials $q  : = \|X-a_j\|^2$ and $\Chebtest := \chebyinterval[(d-2)/2] + 1 $ we have that 
	\begin{align}
		\Chebtest(q) = \chebyinterval[(d-2)/2](q) + 1 \label{eq:shiftedcheby}
	\end{align}
	is  a rotation symmetric\footnote{that is, w.r.t rotations around $a_j$} member of the feasible space of  $\eqref{eq:sosprog}$ after rescaling. Let us denote by $\wtest$ the unique multiple of $\Chebtest(q)$ such that
	\begin{align}
	\sum_{i = 1}^m \lambda_i \wtest(a_i) = 1
	\end{align}
	and write 
	\begin{align}
	\wminmaxtest := \supnormI{\wtest}
	\end{align}
	We claim that 
	\begin{align}
	&\qquad \bestratio := \frac{\Chebtest(0)}{2} = \frac{\wtest(a_j)}{\wminmaxtest} \quad \text{and} \\
	\wminmaxtest \leq \frac{1}{\bestratio \lambda_j} 
	\leq  &\frac{2}{\lambda_j} \left(\frac{1}{2}\left(\kappa + \sqrt{\kappa^2 - 1}\right)^d + \frac{1}{2}\left(\kappa - \sqrt{\kappa^2 - 1}\right)^d  + 1\right)^{-1} \in \landauO{\kappa^{-d}} \label{eq:wminmaxestimate}  
	\end{align}
	where $\kappa = \frac{2\kmax + \frac{1}{2} \kmin}{2\kmax - \frac{1}{2} \kmin + 2\delta}$
\end{pro}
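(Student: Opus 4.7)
The plan is to exhibit $\wtest$ as a positive rescaling of $\Chebtest(q)$, compute both $\wtest(a_j)$ and $\wminmaxtest$ up to that unknown rescaling constant, and then read off the bound from the normalization constraint. The preceding proposition already provides that $\Chebtest(q) = \chebyinterval[(d-2)/2](q) + 1$ is an SOS polynomial, and its degree is $2\cdot(d-2)/2 = d-2$, so it lies in $\polyspace{d-2}$. By the extremal property of the Chebyshev polynomial (Lemma~\ref{lem:chebys_extrprop}), $\chebyinterval[(d-2)/2](t) \in [-1,1]$ for $t\in\I$, hence $\Chebtest(t)\in[0,2]$ on $\I$, with the bound $2$ attained. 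The interval $\I = [\tfrac12\kmin-\delta,\,2\kmax+\delta]$ was chosen precisely so that $q(a_i) = \|a_i-a_j\|^2 \in [\kmin,\kmax]\subset\I$ for every $i\neq j$, while $q(a_j) = 0$ lies strictly below $\I$ (provided $\delta<\tfrac12\kmin$, which also guarantees $\kappa = (b+a)/(b-a) > 1$).

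Next I would evaluate $\Chebtest(q(a_j)) = \Chebtest(0)$. Since $(d-2)/2$ is even under the hypothesis $d\equiv 2\pmod 4$, the polynomial $\chebystandard[(d-2)/2]$ is even, so $\chebyinterval[(d-2)/2](0) = \chebystandard[(d-2)/2](-\kappa) = \chebystandard[(d-2)/2](\kappa) \geq 1$, and therefore $\Chebtest(0)\geq 2 > 0$. In particular $\sum_i \lambda_i \Chebtest(q(a_i)) > 0$ (all summands are nonnegative and the $j$-th is strictly positive), so a unique positive constant $c>0$ exists such that $\wtest := c\,\Chebtest(q)$ satisfies $\sum_i \lambda_i \wtest(a_i)=1$. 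Directly from the two observations above, $\wtest(a_j) = c\,\Chebtest(0)$ and $\wminmaxtest = 2c$, whence
$$\bestratio \;=\; \frac{\wtest(a_j)}{\wminmaxtest} \;=\; \frac{\Chebtest(0)}{2},$$
which is the first asserted identity.

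For the inequality, nonnegativity of every $\wtest(a_i)$ combined with the normalization forces $\lambda_j\wtest(a_j)\leq \sum_i \lambda_i \wtest(a_i) = 1$, so $\wtest(a_j)\leq 1/\lambda_j$. Dividing by $\bestratio$ gives $\wminmaxtest = \wtest(a_j)/\bestratio \leq 1/(\bestratio\,\lambda_j)$. Substituting $\bestratio = (\chebystandard[(d-2)/2](\kappa)+1)/2$ and the closed form $\chebystandard[k](\kappa) = \tfrac12(\kappa+\sqrt{\kappa^2-1})^k + \tfrac12(\kappa-\sqrt{\kappa^2-1})^k$ recalled in the corollary above yields the displayed explicit bound and the asymptotic $\landauO{\kappa^{-d}}$.

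\textbf{Main obstacle.} There is no genuine difficulty; this is bookkeeping built from two ingredients already at hand, namely the Chebyshev extremal property and the SOS/nonnegativity of $\Chebtest(q)$. The only mildly subtle point is that the unknown positive rescaling $c$ cancels in the ratio defining $\bestratio$, which is precisely what allows an explicit upper bound on $\wminmaxtest$ even though $c$ itself depends on the unknown weights $\lambda_i$ and components $a_i$.
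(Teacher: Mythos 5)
Your proof is correct and follows essentially the same route as the paper's: identify $\supnormI{\Chebtest}=2$, use rescaling invariance of the ratio to get $\bestratio = \wtest(a_j)/\wminmaxtest$, then bound $\wtest(a_j)\le 1/\lambda_j$ via nonnegativity and the normalisation, and finally plug in the closed form for $\chebystandard$. You simply spell out the intermediate bookkeeping (degree count, location of $q(a_j)$ relative to $\I$, parity of $(d-2)/2$, positivity of the normalising constant) that the paper leaves implicit.
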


\begin{proof}  Note that the ratio $\bestratio = \frac{\Chebtest(0)}{\supnormI{\Chebtest}}$ is invariant under rescaling,  whence we clearly have $\bestratio = \frac{\wtest(a_j)}{\wminmaxtest} $. Unlike $\wminmaxtest $ though,  we can explicitly compute $\bestratio$. For the estimate,  note that 
\begin{align*}
\bestratio &= \frac{\wtest(a_j)}{\wminmaxtest} \leq \frac{1 }{\lambda_j\wminmaxtest} \\
\Longleftrightarrow  \wminmaxtest &\leq (\bestratio \lambda_j)^{-1} =  \frac{2 }{\lambda_j\Chebtest(0)}  
\end{align*}
This shows the first inequality in \eqref{eq:wminmaxestimate}. The second inequality follows by plugging in  the known explicit value of $\bestratio$ or $\Chebtest(0)$, respectively.
\end{proof}

We will usually not work with the monstrosity that is Equation \eqref{eq:wminmaxestimate},  but use  instead the qualitative  fact that $\wminmax \leq \frac{1}{\bestratio \lambda_j}$ drops exponentially with $d$.

\subsection{Guarantees for the Optimiser} Now that we know that there is a  testimony in the feasible space which has the properties we want,  we still have to show that the optimiser $\Wst$  will concentrate on precisely one of the $a_i$ (like the testimony did).  Alas,    this problem becomes much more difficult in the approximate setting:  When we had  the perfect testimony satisfying \eqref{eq:deltaproperty} in the feasible space,  we could easily deduce \eqref{eq:deltapropertyforW}.  This is due to the fact that if $\I_j$ does not waste any concentration on a sub-optimal component,  then neither can $\Wst$.  Now however,  our testimony will likely waste some concentration   on components  with a very low value of $\sca{a_i,v}$ and the optimiser could theoretically  beat the testimony  by putting  e.g.  all of that concentration on the  component achieving the second largest value of $\sca{a_i,v}$,  provided that $\Delta_{j,i,v} := \sca{a_j,v} - \sca{a_i,v}$ is very small. 

The latter event is unlikely to happen, if $\norm{a_j}\geq \norm{a_i}$ and the components are not too correlated, but the choice of  $v $,  whether it's done randomly or  deterministically,  is still a non-trivial  issue which   we will address in   its own section.  For the moment,  let us just see what guarantees we get for a fixed value of $\Delta_{j,i,v}$.  To be slightly more general,  let us further replace  the discriminatory  polynomial $\lfp{v}$ by an arbitrary polynomial $f$ of degree at most two.  Likewise,  $ \Delta_{j,i,v}$ is replaced by $\dvobfun$.

\begin{lem} \label{lem:guarantees_concentration} Let $\Wst$ denote the optimiser of
\[
\max \sum_{i = 1}^m W(a_i) \obfun \quad (\ast)
\]
where $f$ is some polynomial in $X$ of degree less or equal $2$ (e.g. $f = \lfp{v}$ or $f = \lfp{v}^2$ or $f = (X_1^2 + \ldots + X_n^2)$).
Then 
\begin{align}
	\sum_{i\neq j} \lambda_i \Wst(a_i) \leq \wminmax\cdot \norm[1]{\lambda} \frac{\max_{i\neq j} \Delta_{j, i, f}}{\min_{i\neq j} \Delta_{j,i,f}}
\end{align}
\end{lem}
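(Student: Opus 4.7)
The plan is to use the Chebyshev testimony $\wtest$ from Proposition~\ref{pro:guarantees_testimony} as a competitor for the optimiser $\Wst$: since $\wtest$ lies in the feasible space of $(\ast)$, optimality of $\Wst$ gives us the inequality $\sum_i \lambda_i \Wst(a_i) f(a_i) \geq \sum_i \lambda_i \wtest(a_i) f(a_i)$, which we will massage into the desired bound. (I am reading the objective of $(\ast)$ as $\sum_i \lambda_i W(a_i) f(a_i)$ so that it is consistent with the SOS programme~\eqref{eq:sosprog} and with the $\lambda_i$-weighted conclusion.)

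The first step is to rewrite the objective in a form that isolates the ``error at the wrong components''. Using the feasibility constraint $\sum_i \lambda_i W(a_i) = 1$ and the identity $\lambda_j W(a_j) = 1 - \sum_{i\neq j} \lambda_i W(a_i)$, a one-line computation yields
\[
\sum_i \lambda_i W(a_i) f(a_i) = f(a_j) - \sum_{i\neq j} \lambda_i W(a_i) \dvobfun
\]
for every feasible $W$. Plugging both $W = \Wst$ and $W = \wtest$ into this identity and cancelling the common $f(a_j)$, optimality of $\Wst$ turns into
\[
\sum_{i\neq j} \lambda_i \Wst(a_i) \dvobfun \;\leq\; \sum_{i\neq j} \lambda_i \wtest(a_i) \dvobfun.
\]

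Now the estimate is essentially combinatorial. On the left-hand side, since $\Wst \succeq 0$ and $\lambda_i > 0$ we have $\lambda_i \Wst(a_i) \geq 0$, so bounding each $\dvobfun$ below by $\min_{i\neq j} \dvobfun$ gives $(\min_{i\neq j} \dvobfun) \sum_{i\neq j} \lambda_i \Wst(a_i)$ as a lower bound. On the right-hand side, bounding each $\dvobfun$ above by $\max_{i\neq j} \dvobfun$ and then using that $\wtest(a_i) \leq \wminmax$ for $i\neq j$ (because $\norm{a_i - a_j}^2 \in \I$ by construction of the safety interval in \secref{sec:testimony}) gives the upper bound $(\max_{i\neq j} \dvobfun) \wminmax \norm[1]{\lambda}$. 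Dividing through by $\min_{i\neq j} \dvobfun$ yields the claim.

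I do not expect any serious obstacle: the two honest inputs are (i) the rewriting trick above, which converts the variational inequality into a statement about the ``misplaced mass'' $\sum_{i\neq j} \lambda_i W(a_i)$, and (ii) the pointwise bound $\wtest(a_i) \leq \wminmax$ on the testimony off the peak, which is immediate from $\wminmax = \supnormI{\wtest}$. The only subtlety worth flagging is that the argument implicitly requires $\min_{i\neq j} \dvobfun > 0$, i.e.\ that $j$ is the unique maximiser of $f$ among $a_1,\ldots,a_m$; this is the hypothesis that the subsequent sections on choosing the discriminatory polynomial $f$ (and the random $v$) will have to ensure with quantitatively good margins.
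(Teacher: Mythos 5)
Your proof is correct and follows essentially the same route as the paper: use the Chebyshev testimony $\wtest$ as a competitor, rewrite the objective via the normalisation constraint $\sum_i\lambda_i W(a_i)=1$ into the ``misplaced mass'' form $f(a_j) - \sum_{i\neq j}\lambda_i W(a_i)\dvobfun$, and then bound. You also correctly read the objective of $(\ast)$ as $\lambda_i$-weighted, which is what the paper's own proof does even though the lemma statement drops the $\lambda_i$. Your final step is in fact cleaner than the paper's: you bound $\sum_{i\neq j}\lambda_i\Wst(a_i)\dvobfun \leq \sum_{i\neq j}\lambda_i\wtest(a_i)\dvobfun$ directly and then use $\wtest(a_i)\leq\wminmax$ for all $i\neq j$, whereas the paper performs an unnecessary case split on the sign of $\wtest(a_i)-\Wst(a_i)$ and its concluding ``Hence'' leaves a step implicit; your direct argument sidesteps that entirely.
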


\begin{proof} Let us write again for brevity \[
	\Delta_{j,i,f} := f(a_j) - f(a_i) \geq 0
	\]
	where we suppose again that $a_j$ is the maximiser of $f$ among all $i$.
	Since $\Wst$ is the optimiser of $(\ast)$, we have 
	\begin{align*}
		&\sum_{i = 1}^m \lambda_i \Wst(a_i) \obfun - \sum_{i = 1}^m \lambda_i \wtest(a_i) \obfun \geq 0 \\
		\Longleftrightarrow &\left(f(a_j) - \sum_{i\neq j} \lambda_i \Wst(a_i) \dvobfun \right)-\left(f(a_j) - \sum_{i\neq j} \lambda_i \wtest(a_i) \dvobfun \right)  \geq 0 \\
		\Longleftrightarrow &\sum_{i\neq j} \lambda_i (\wtest(a_i)-\Wst(a_i)) \dvobfun   \geq 0 
	\end{align*}
	If $\wtest(a_i) \geq \Wst(a_i)$ for some $i\neq j$, then $\wtest(a_i)-\Wst(a_i) \leq \wminmax$. Otherwise we use $\wtest(a_i)~-~\Wst(a_i)~\leq~\wminmax~-~\Wst(a_i)$. 
	Combining these, we can upper-bound the left hand side and get
	\begin{align*}
	\qquad \sum_{\substack{\tiny i\neq j\\ \wtest(a_i) \geq \Wst(a_i) }} \:\: \lambda_i \wminmax \dvobfun &-  \sum_{\substack{i\neq j\\ \wtest(a_i) < \Wst(a_i) }} \lambda_i  (\Wst(a_i) - \wtest(a_i))  \dvobfun \geq 0\\
	\implies \sum_{\substack{i\neq j\\ \wtest(a_i) \geq \Wst(a_i) }} \lambda_i \wminmax \dvobfun    &\geq  \sum_{\substack{i\neq j\\ \wtest(a_i) < \Wst(a_i) }} \lambda_i  (\Wst(a_i) - \wtest(a_i))  \dvobfun \geq 0 \\
	\implies  \sum_{\substack{i\neq j\\ \wtest(a_i) \geq  \Wst(a_i) }} \lambda_i \wminmax \dvobfun    &\geq  \sum_{\substack{i\neq j\\ \wtest(a_i) < \Wst(a_i) }} \lambda_i  (\Wst(a_i) - \wminmax)  \dvobfun \\
	\implies \qquad \sum_{i\neq j} \:\:\qquad \lambda_i \wminmax \dvobfun  &\geq  \sum_{\substack{i\neq j\\ \wtest(a_i) < \Wst(a_i) }} \lambda_i  \Wst(a_i)  \dvobfun 
	\end{align*}
	Hence \begin{align}
	\sum_{i\neq j} \lambda_i  \Wst(a_i)  \leq (\min_{i\neq j}\dvobfun)^{-1} \sum_{i\neq j} \lambda_i \wminmax \dvobfun \leq (\min_{i\neq j}\dvobfun)^{-1} \norm[1]{\lambda} \wminmax \max_{i\neq j} \dvobfun
	\end{align}
	or, equivalently
	\begin{align}
	\lambda_j W(a_j) \geq 1 - \norm[1]{\lambda} \wminmax \frac{\max_{i\neq j} \dvobfun}{\min_{i\neq j}\dvobfun}
	\end{align}
	
\end{proof}

Note that for $f  = \lfp{v}^2$ or 
$ f  = \lfp{v}$, the maximum delta value can easily be bounded by $\max_{i\neq j}\norm{a_i-a_j}\norm{a_i+a_j}$  or $\max_{i\neq j}\norm{a_i-a_j}$,  respectively.  The problem is thus to find a polynomial $f$ or a choice of  $v $ such that the minimum delta value can be lower bounded. This seems to be a very hard task  to make deterministically,  but the chance of getting a good choice of $v $ at random is not too bad,  as we will see. In certain special cases,  there might be  systematic choices,  especially when prior knowledge to the domain  of the $a_i$ is given (e.g. if they lie on the  $\{\pm 1\}^n$-hypercube). 

\subsection{Approximate Recovery of One Component} \label{sec:one_component}
Now we've got everything we need in order to formulate the recovery step for a single component,  supposed we are  given the correct choice of  $v $ as a parameter.  To be precise,  the parameter is actually some discriminatory polynomial $f$ which can of course be chosen as $\lfp{v}$. This kernel step is formulated in Algorithm \ref{alg:approxDecomp_onestep}.  It will return some component  up to an error term that can be made infinitely small as $ d $ grows.

\begin{algorithm}[H]
	\caption{Approximate $v$-algorithm, one component, $f$ given as parameter} 
	\label{alg:approxDecomp_onestep}
	\textbf{Input: }Tensors $T_0 \in \hompolyspace{0}, \ldots, T_d\in \hompolyspace{d}$ \\
	\textbf{Parameters: } Some $f\in\polyspace{2}$.\\
	\textbf{Output: } One vector $c$ satisfying $\sca{c, a_j} \geq  \big(1 - 2 \wminmax\cdot \norm[1]{\lambda}\cdot \frac{\speccorrel}{ \|a_j\|^2} \cdot \frac{\max_{i\neq j} \Delta_{j,i,f}}{\min_{i\neq j} \Delta_{j,i,f}}\big)^{1/2}$ 
	and $|\norm{c}^2 - \norm{a_j}^2| \leq \wminmax\cdot \norm[1]{\lambda}\cdot \frac{\speccorrel}{ \|a_j\|^2} \cdot \frac{\max_{i\neq j} \Delta_{j,i,f}}{\min_{i\neq j}\Delta_{j,i,f}}$ for some $j\in[m]$.\\
	\textbf{Require: }$d \equiv 2 \mod 4$ and there should exist a simultaneous decomposition $T_k = \weightdecomp[k]$ of the input with $m$ distinct components. 
	\\
	\textbf{Procedure: }
	\begin{algorithmic}[1]
		\STATE{$\mathbf{Solve}$ the SOS optimisation problem} \[
			\max \rezprod{W \lfp{v}}{\sum_{k = 1}^d T_k} \quad (\ast)
			\]
		over all polynomials $W \succeq 0$ satisfying $\deg(W) \leq d-2$ and $\rezprod{W }{\sum_{k = 0}^{d-2} T_k} = 1$.
		\STATE{$\mathbf{Let }$ $W^\ast $ denote the output of this optimisation problem.} 
		\STATE{$\mathbf{Compute}$ the matrix / quad. form $M := \rezprod{W^\ast \tens \id}{\sum_{k = 2}^{d} T_k}$ }
		\STATE{$\mathbf{Compute}$ an eigendecomposition of $M$}  with eigenvalues and corresponding eigenvectors of unit length.
		\STATE{$\mathbf{Let }$ $u$ denote the eigenvector corresponding to the largest eigenvalue $\mu$.}
		\STATE{$\mathbf{Decide}$ the factor of $\pm 1$: Let $L := \sum_{i= 1}^m \lambda_i W(a_i) a_i$. If $\sca{u,L} > 0$, do not change $u$. Otherwise, replace $u:= -u$. }
		\STATE{$\mathbf{Output }$ $c := \sqrt{\mu} u$} as the component and $\rho := \frac{1}{W(c)}$ as the weight.
	\end{algorithmic}
\end{algorithm}

The procedure itself did not alter that much. We will start again by proving  that the first round of the algorithm  will indeed  yield an approximation to  some  component   of the  $T_k$. To this end,   we will prove the following theorem. 

\begin{lem}\label{lem:guarantees_recovery}
	Let $j\in[m]$ and $W$ an SOS polynomial such that $\sum_{i = 1}^{m} W(a_i) = 1 $ and $\sum_{i\neq j} W(a_i)\leq C \in (0,1)$. Then each top eigenvector $u$ of $M := \sum_{i = 1}^m \lambda_iW(a_i) a_i\transp{a_i}$ satisfies 
	\begin{align*}
	\sca{\frac{a_j}{\|a_j\|}, u}^2 \geq \gamma 
	\end{align*}
	for $\gamma = 1 - \frac{2 C \speccorrel}{\|a_j\|^2}$. Here, $\speccorrel$ is one of the condition parameters from \ref{not:cond_params}. In particular, if $\sca{a_i, u} \geq 0$ then
	\[
	\|u-a_j\|_2^2 = \|u\|_2^2 +  \|a_j\|_2^2 - 2\sca{a_i, u} \leq 1 + (1-2\frac{\sqrt{\gamma}}{\norm{a_j}}) \|a_j\|_2^2 
	\]
\end{lem}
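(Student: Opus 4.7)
The plan is to reduce the claim to Lemma \ref{lem:gappedmatrices} by exhibiting $M$ as a small perturbation of the rank-one matrix $a_j\transp{a_j}$. The key move is the ``telescoping'' decomposition
\[
M = \sum_{i=1}^m \lambda_i W(a_i)\, a_j\transp{a_j} \; + \; \sum_{i\neq j}\lambda_i W(a_i)\bigl(a_i\transp{a_i} - a_j\transp{a_j}\bigr) = a_j\transp{a_j} + E,
\]
where $E := \sum_{i\neq j}\lambda_i W(a_i)(a_i\transp{a_i} - a_j\transp{a_j})$. Here I read the normalisation in its weighted form $\sum_i \lambda_i W(a_i) = 1$ matching the linear constraint of $(\ast)$, so that the constant-in-$i$ part collapses onto a single $a_j\transp{a_j}$.

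Next, I would bound $\spec{E}$. Since $W$ is SOS and all $\lambda_i>0$, the coefficients $\lambda_i W(a_i)$ are nonnegative, so the triangle inequality together with the definition of $\speccorrel$ in Notation \ref{not:cond_params} gives
\[
\spec{E} \;\leq\; \sum_{i\neq j}\lambda_i W(a_i)\,\spec{a_i\transp{a_i}-a_j\transp{a_j}} \;\leq\; C\cdot\speccorrel.
\]
From $M = a_j\transp{a_j} + E$ and $\spec{a_j\transp{a_j}} = \|a_j\|^2$, the reverse triangle inequality yields $\Mspec \geq \|a_j\|^2 - C\speccorrel$. Combining these two bounds,
\[
\spec{M - a_j\transp{a_j}} = \spec{E} \leq C\speccorrel \leq \Mspec - \bigl(\|a_j\|^2 - 2C\speccorrel\bigr) = \Mspec - \gamma\|a_j\|^2
\]
with $\gamma = 1 - \frac{2C\speccorrel}{\|a_j\|^2}$. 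This is precisely the hypothesis of Lemma \ref{lem:gappedmatrices} applied with $a = a_j$, yielding $\sca{u, a_j/\|a_j\|}^2 \geq \gamma$ for every top eigenvector $u$.

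For the Euclidean-distance consequence, I would expand $\|u-a_j\|^2 = \|u\|^2 + \|a_j\|^2 - 2\sca{u,a_j}$, use $\|u\|=1$, and combine the eigenvector bound with the sign assumption to get $\sca{u,a_j} \geq \sqrt{\gamma}\,\|a_j\|$; rearranging gives the claimed estimate. I do not foresee a serious obstacle beyond choosing the correct ``centering'' matrix $a_j\transp{a_j}$ in the decomposition; once that is in place everything reduces to the triangle inequality and the black-box Lemma \ref{lem:gappedmatrices}. The only delicate point is that the hypothesis as stated uses the unweighted sum $\sum_i W(a_i) = 1$, whereas the decomposition needs the weighted version matching the SOS constraint, so I would interpret $C$ accordingly (i.e.\ as a bound on $\sum_{i\neq j}\lambda_i W(a_i)$).
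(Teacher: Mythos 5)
Your proof is correct and follows essentially the same route as the paper: bound $\spec{M - a_j\transp{a_j}} \leq C\speccorrel$ via the triangle inequality on $E = \sum_{i\neq j}\lambda_i W(a_i)(a_i\transp{a_i}-a_j\transp{a_j})$, lower-bound $\Mspec$, and apply Lemma~\ref{lem:gappedmatrices}. Your reverse-triangle-inequality shortcut $\Mspec \geq \|a_j\|^2 - \spec{E}$ is slightly cleaner than the paper's direct evaluation of $M\sca{a_j,a_j}$, and you correctly noticed that the statement's normalisation is a typo and should be read in the weighted form $\sum_i \lambda_i W(a_i)=1$ with $\sum_{i\neq j}\lambda_i W(a_i)\leq C$, which is what the paper's own proof actually uses.
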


\begin{proof} 
	Denote by $\mu_{\max}$ the  eigenvalue of $M$ of largest absolute value. Recall that $\Mspec$ is the absolute value of the largest eigenvalue of $M$. We have thus\footnote{$M$ is psd. Hence the $|\cdot|$ is actually  unnecessary.} \[
	\|M\|_{\text{spec}} = |\mu_{\max}| = \sup_{x\in \Sn} |M\sca{x, x}| 
	\]
	We want to use Lemma \ref{lem:gappedmatrices}. To this end, we need a lower bound for $\Mspec$ and an upper  bound  for  $\Mdiffspec$:
	\begin{align*}
		\norm{a_j}^2\|M\|_{\text{spec}} &\geq M\sca{a_j, a_j} = \sum_{i = 1}^m \lambda_i W(a_i) \sca{a_i, a_j}^2 \\
		&\geq (1 - \sum_{i \neq j} \lambda_i W(a_i)) \sca{a_j, a_j}^2 + \sum_{i \neq j} \lambda_i W(a_i) \sca{a_i, a_j}^2 \\
		&\geq \norm{a_j}^4 - \sum_{i \neq j} \lambda_i W(a_i) \left(\sca{a_j, a_j}^2 - \sca{a_i, a_j}^2\right) \\
		&\geq \norm{a_j}^4 - C \norm{a_j}^2 \max_{i\neq j} \left(\sca{a_j, \frac{a_j}{\norm{a_j}}}^2 - \sca{a_i, \frac{a_j}{\norm{a_j}}}^2\right)\\
		&\geq \norm{a_j}^4 - C \norm{a_j}^2 \max_{i\neq j} \sup_{x\in \Sn} \left(\sca{a_j, x}^2 - \sca{a_i, x}^2\right)\\
		&\geq \norm{a_j}^4 - C \norm{a_j}^2 \max_{i\neq j} \spec{a_i\transp{a_i} - a_j\transp{a_j}}\\
		&= \norm{a_j}^4 - C \norm{a_j}^2 \speccorrel
	\end{align*}
	Dividing by $\norm{a_j}^2$,  we get the bound
	\begin{align}
		\Mspec \geq \norm{a_j}^2 -  C \speccorrel \label{eq:Mspec_lowerbound}
	\end{align}
	Likewise we get, using $\lambda_j W(a_j) - 1 = -\sum_{i \neq j} \lambda_i W(a_i) $
	\begin{align*}
		\Mdiffspec &= \sup_{x\in \Sn} \big|\sum_{i = 1}^m \lambda_i W(a_i) \sca{a_i, x}^2 - \sca{a_j, x}^2\big|\\
		&= \sup_{x\in \Sn} \big|\sum_{i \neq j} \lambda_i W(a_i) \sca{a_i, x}^2 + (\lambda_j W(a_j) - 1) \sca{a_j, x}^2\big|\\
		&= \sup_{x\in \Sn} \big|\sum_{i \neq j} \lambda_i W(a_i) \left(\sca{a_i, x}^2 - \sca{a_j, x}^2\right)\big|\\
		&\leq \sup_{x\in \Sn}\:\: \sum_{i \neq j} \lambda_i W(a_i) \left|\sca{a_i, x}^2 - \sca{a_j, x}^2\right|\\
		&\leq \:\:\sum_{i \neq j} \lambda_i W(a_i)  \sup_{x\in \Sn} \left|\sca{a_i, x}^2 - \sca{a_j, x}^2\right|\\
		&\leq C \speccorrel
	\end{align*}
	Therefore \[
	\Mspec - \Mdiffspec \geq  \|a_j\|^2 - 2C \speccorrel =  \|a_j\|^2 \left(1 - \frac{2 C \speccorrel}{\|a_j\|^2}\right)
	\]and thus we may apply lemma \ref{lem:gappedmatrices} with \[
	\gamma := 1 - \frac{2 C \speccorrel}{\|a_j\|^2}
	\]
\end{proof}

It remains to put all of these bounds together to prove our guarantees for the  output.

\begin{sat} Let $j\in[m]$ such that $f(a_j)$ is maximal among all $a_i$. 
	Then each top eigenvector $u$ of $M := \sum_{i = 1}^m \lambda_i\Wst(a_i) a_i\transp{a_i}$ satisfies 	
	\begin{align*}
	\sca{\frac{a_j}{\|a_j\|}, u}^2 \geq 1 - 2 \wminmax\cdot \norm[1]{\lambda}\cdot \frac{\speccorrel}{ \|a_j\|^2} \cdot \frac{\max_{i\neq j} \Delta_{j,i,f}}{\min_{i\neq j} \Delta_{j,i,f}}
	\end{align*}
\end{sat}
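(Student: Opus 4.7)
The plan is to prove this theorem as a direct corollary of the two lemmas that immediately precede it: Lemma \ref{lem:guarantees_concentration} (which controls how much mass the optimiser $\Wst$ can put on the sub-optimal components) and Lemma \ref{lem:guarantees_recovery} (which translates such concentration into a guarantee on the top eigenvector of $M$). So the proof is essentially a one-line chaining argument, with the bound from the first lemma being plugged in as the parameter $C$ of the second lemma.

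First I would fix the index $j\in[m]$ such that $f(a_j)=\max_{i\in[m]} f(a_i)$, which is the same $j$ that appears in both preceding lemmas. Applying Lemma \ref{lem:guarantees_concentration} to the optimiser $\Wst$ of the SOS programme (with the discriminatory polynomial $f$), I obtain
\[
\sum_{i\neq j} \lambda_i \Wst(a_i) \;\leq\; \wminmax\cdot \norm[1]{\lambda}\cdot\frac{\max_{i\neq j} \Delta_{j,i,f}}{\min_{i\neq j}\Delta_{j,i,f}} \;=:\; C.
\]
Note that the feasibility condition $\rezprod{W}{\sum_{k=0}^{d-2} T_k} = 1$ of the SOS programme translates (by Prop.~\ref{pro:rezeval}) to $\sum_{i=1}^m \lambda_i \Wst(a_i) = 1$, so the hypothesis $\sum_{i\neq j}\lambda_i \Wst(a_i) \le C$ required by Lemma \ref{lem:guarantees_recovery} is satisfied with this $C$ (one should of course check that the $\wminmax$ prefactor times $d$-growth in $\kappa^{-d}$ and the ratio of the $\Delta$ values is indeed smaller than $1$, which is the regime of interest; otherwise the bound we want to prove is vacuous).

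Next, I would feed this $C$ directly into Lemma \ref{lem:guarantees_recovery} applied to $W=\Wst$. The lemma yields that every top eigenvector $u$ of $M = \sum_i \lambda_i \Wst(a_i) a_i\transp{a_i}$ satisfies
\[
\sca{\tfrac{a_j}{\|a_j\|},\,u}^2 \;\geq\; 1-\frac{2C\,\speccorrel}{\|a_j\|^2}.
\]
Substituting the value of $C$ gives precisely the claimed bound
\[
\sca{\tfrac{a_j}{\|a_j\|},\,u}^2 \;\geq\; 1-2\,\wminmax\cdot\norm[1]{\lambda}\cdot\frac{\speccorrel}{\|a_j\|^2}\cdot\frac{\max_{i\neq j}\Delta_{j,i,f}}{\min_{i\neq j}\Delta_{j,i,f}}.
\]

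There is no real obstacle here; the two lemmas were set up exactly to be composable. The only minor sanity checks are (i) that both lemmas refer to the \emph{same} index $j$, namely the $f$-maximiser (the concentration lemma puts mass on the $f$-maximising component, and the recovery lemma then witnesses that same component as the direction of the top eigenvector), and (ii) that the $\lambda_i$-weights in the normalisation are handled consistently between the SOS constraint and the hypothesis of Lemma \ref{lem:guarantees_recovery}. Both are immediate from the definitions.
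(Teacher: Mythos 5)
Your proof is correct and takes essentially the same approach as the paper: fix $j$ as the $f$-maximiser, apply Lemma~\ref{lem:guarantees_concentration} to bound $\sum_{i\neq j}\lambda_i\Wst(a_i)$ by $C$, and feed that $C$ into Lemma~\ref{lem:guarantees_recovery}. Your sanity checks (same $j$ in both lemmas, consistent handling of the $\lambda_i$-weighted normalisation) are also the right things to verify, and in fact they quietly correct a small notational slip in the paper where the $\lambda_i$-weights are dropped from the hypotheses of Lemma~\ref{lem:guarantees_recovery} and from the paper's own statement of the concentration bound.
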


\begin{proof} By Lemma \ref{lem:guarantees_concentration}, we have $\sum_{i\neq j} \Wst(a_i)\leq \wminmax\cdot \norm[1]{\lambda} \frac{\max_{i\neq j} \Delta_{j, i, f}}{\min_{i\neq j} \Delta_{j,i,f}}$. Therefore we may choose $C := \wminmax\cdot \norm[1]{\lambda} \frac{\max_{i\neq j} \Delta_{j, i, f}}{\min_{i\neq j} \Delta_{j,i,f}}$ in Lemma \ref{lem:guarantees_recovery} and get thus a vector $u$ satisfying
	\begin{align*}
	\sca{\frac{a_j}{\|a_j\|}, u}^2 \geq 1 - 2 \wminmax\cdot \norm[1]{\lambda}\cdot \frac{\speccorrel}{ \|a_j\|^2} \cdot \frac{\max_{i\neq j} \Delta_{j,i,f}}{\min_{i\neq j} \Delta_{j,i,f}}
	\end{align*}
	out of $M$.
\end{proof}

Let us quickly discuss the meaning of the  constants occurring in our  bound of the error term:   $\wminmax$ is the friendly approximation constant that we can make as small as we want as we let $d\to\infty$ – or perhaps it's better to say $d\to 2m$.  Note that $\wminmax$ will decrease exponentially with $ d $ and that it depends only on $ d $ and the condition parameters $\kmin$ and $\kmax$.\footnote{As long as we are  recovering only one component,  we do not yet need the safety margin $\varepsilon$,  but this will come in soon.} As we saw  already in the preliminaries,  
$\speccorrel$ is part of the condition parameters as well and we would have expected  something related to the weights like $\norm[1]{\lambda}$ to occur  in there anyway. The term  $\|a_j\|^2$ in the denominator is slightly weirder: It tells us that we can't recover components of small norm. First, we'd actually expect  the weight $\lambda_j$ to occur there, too, since it should be ok to recover a vector of small  length as long as  the weight is sufficiently big. We can indeed pull $\lambda_j$ off the hat by using the estimate $\wminmaxtest \leq (\lambda_j \bestratio)^{-1}$. 

But there is still a problem with vectors close to the zero vector. 
This seems to be an issue with our recovery technique:  These eigenvector methods all rely on vectors of unit length and it's probably due to the  side effects of rescaling  that everything gets more unstable for small length components.  I'm not entirely sure with this,  though.  It would thus be preferable if we could without loss of generality assume that the components  were of unit length and rescale the weights accordingly. However,  to be able to do so  while maintaining  full generality,  we would need to be able to generate the fake moments \[
T_k = \sum_{i = 1}^m s_i^k \|a_i\|^{k} \lfp{a_i}^k
\]
for some $s_i \in\{\pm 1\}$, as outlined in \secref{sec:qualitatives}.

Of course the most problematic  term is $\frac{1}{\min_{i\neq j} \Delta_{j,i,f}}$.  If we chose the wrong discriminatory polynomial,  then this term might actually cast a well-conditioned problem into an ill-conditioned one. Therefore,  we have to take care that at least we do not make  one of the worst possible choices. We dedicate the next section to this problem.

But first we have to show that  the length and  the factor of  $ \pm 1$ are correct, too. Note that from Eq. \eqref{eq:Mspec_lowerbound} in the proof of Lemma \ref{lem:guarantees_concentration} we already know that the top  eigenvalue  $\mu_1$ satisfies
\begin{align}
 \mu_1 = \Mspec \geq \norm{a_j}^2 - C\speccorrel = \norm{a_j}^2 - \wminmax\cdot\speccorrel \norm[1]{\lambda} \frac{\max_{i\neq j} \Delta_{j, i, f}}{\min_{i\neq j} \Delta_{j,i,f}}
\end{align}
By a very similar calculation,  we also get the upper bound
\begin{align*}
\Mspec &= \sup_{x\in \Sn} \big|\sum_{i = 1}^m \lambda_i W(a_i) \sca{a_i, x}^2 \big|\\
&= \sup_{x\in \Sn} \big|\sca{a_j, x}^2 + \sum_{i \neq j} \lambda_i W(a_i) \left(\sca{a_i, x}^2 - \sca{a_j, x}^2\right)\big|\\
&\leq \norm{a_j}^2 + \sup_{x\in \Sn}\:\: \sum_{i \neq j} \lambda_i W(a_i) \left|\sca{a_i, x}^2 - \sca{a_j, x}^2\right|\\
&\leq \norm{a_j}^2 + C \speccorrel
\end{align*}
Hence \begin{align}
|\Mspec - \norm{a_j}^2| \leq  C \speccorrel = \wminmax\cdot \speccorrel \norm[1]{\lambda} \frac{\max_{i\neq j} \Delta_{j, i, f}}{\min_{i\neq j} \Delta_{j,i,f}}
\end{align}
Therefore $|\norm{c}^2 - \norm{a_j}^2| = |\mu_1 - \norm{a_j}^2| = |\Mspec - \norm{a_j}^2| \leq \wminmax\cdot \speccorrel \norm[1]{\lambda} \frac{\max_{i\neq j} \Delta_{j, i, f}}{\min_{i\neq j} \Delta_{j,i,f}}$ as claimed in Alg. \ref{alg:approxDecomp_onestep}, which shows approximate correctness of the length. For the sign, let again
\[
L = \sum_{i= 1}^m \lambda_i W(a_i) a_i
\]
and note that
\begin{align}
\norm{L - a_j} &= \norm{\sum_{i \neq j} \lambda_i W(a_i) (a_i - a_j)} \leq |\sum_{i \neq j} \lambda_i W(a_i)| \max_{i\neq j} \norm{a_i - a_j} \nonumber \\
&\leq C \cdot \sqrt{\kmax} = \sqrt{\kmax} \cdot \wminmax\cdot \norm[1]{\lambda} \frac{\max_{i\neq j} \Delta_{j, i, f}}{\min_{i\neq j} \label{eq:L-aj} \Delta_{j,i,f}}
\end{align}
It's clear that the scalar product $\sca{L, u}$ will be positive if and only if $L $ is closer  to  $u$ than to $-u$.  Equivalently, we can of course replace $L $ by $L\cdot\norm{a_j}$. Set $\eta := \frac{a_j}{\norm{a_j}}$. The correct  sign choice for  $u $ achieves $\norm{u-\eta} \leq \norm{u - (-\eta)}$. 
By Lemma \ref{lem:guarantees_recovery}  we know that there exists a sign choice $s\in\{\pm 1\}$ such that 
\begin{align}
\norm{su - \eta}^2 =  2 - 2\sca{\eta,su} \leq 2- 2 \sqrt{1 - \frac{2C\speccorrel}{\norm{a_j}^2}} =: \delta\in \landauO{C} \label{eq:su-eta}
\end{align}
Now by the triangle inequality, \eqref{eq:L-aj} and \eqref{eq:su-eta}, 
we have\[
\norm{su-L/\norm{a_j}} \leq \norm{su - \eta} + \norm{\eta - L/\norm{a_j}} \leq \sqrt{\delta} + C \cdot \norm{a_j}\sqrt{\kmax}\approx \sqrt{C} + C  \ll 1
\]
whereas for the wrong sign choice
\begin{align*}
\norm{su + L/\norm{a_j}} &= \norm{(su-L/\norm{a_j}) + 2L/\norm{a_j}} 
 \geq |2\norm{L}/\norm{a_j} - \norm{su-L/\norm{a_j}}| \\
 &\geq 2\frac{\norm{L}}{\norm{a_j}} - (\sqrt{\delta} + C \cdot \norm{a_j}\sqrt{\kmax}) \geq \norm{L}\\
 &\geq 2- (\underbrace{2C \frac{\sqrt{\kmax}}{\norm{a_j}} + \sqrt{\delta} + C \cdot \norm{a_j}\sqrt{\kmax}}_{=: \tilde{C}} )
\end{align*}
where the last step holds due to $\frac{\norm{L}}{\norm{a_j}} \geq 1 - \frac{\norm{L - a_j}}{\norm{a_j}} \geq 1-C \frac{\sqrt{\kmax}}{\norm{a_j}}$. If $C$ is small enough such that $\tilde{C} \leq 1$ clearly we can distinguish the correct and the wrong $s$ since for the correct one $0 \approx \norm{su-L/\norm{a_j}}$ and $\norm{su+L/\norm{a_j}} \gg 1$. 
We apologise for the sloppiness in this one,  but since this is about  making a binary decision, we do not have to care about how the exact error looks like as long as it will decrease fast with $d $.\\

Note that we didn't  give any guarantee  regarding correctness of the weight. This is very much intentional:  $W$ will  usually be a (moderately) high degree polynomial and  thus if $c $ is only slightly off from the true component $a_j $,  $W(c)$ might already be a terrible approximation for $W(a_j)$. Hence,  trusting the continuity of $W $ would be madness!  A much better idea is to  recover all components $c_i$ first and then get the weights by solving a linear least squares programme of the kind
\[
\min_{\lambda \in \R_{> 0}^n} \norm[F]{T_d - \sum_{i = 1}^m \lambda_{i} \lfp{c_i}^d}^2
\]

It's of course also possible to boost  the accuracy of the components by running  some local searching procedure afterwards. As Hopkins et al. pointed out in \cite{HSS16},  there is a lot of potential for synergy here,  since these SOS based procedures output exactly what a local searching procedure needs:  Some approximate solution near the global  minimum of \[
\sum_{k = 0}^d \norm[F]{T_k - \sum_{i = 1}^m \lambda_{i} \lfp{c_i}^k}^2
\]

\subsection{Choice of $v$} \label{sec:v_choice}

One thing we had to leave open up to now  is the choice of $v $,  or,  more generally,  the choice of a suitable discriminatory polynomial $ f $. Up to now,  we've been always trying to keep full generality. However,  we currently do not know of any choice of discriminatory polynomials that is universally good in the fully general case. Let us switch therefore to the case $\norm{a_i} = 1$ for all $i\in[m]$. Also, it turns out that the choice $f = \lfp{v}^2$ instead of $\lfp{v}$ will make our life easier.  Let us therefore redefine \[
\Delta_{j,i,v} := \lfv{j}^2 - \lfv{i}^2
\] 
Now, we should first  estimate the chances to find a good $v \sim \mathcal{U}$ at random. Precisely, we're interested in the conditional probability 
\begin{align}
\prob_{v\sim \mathcal{U}}\left[\lfv{j}^2 - \lfv{i}^2 \geq r \mid \forall k \neq i,j : \lfv{j}^2 \geq \lfv{i}^2 \geq \lfv{k}^2\right]
\end{align}
that $\Delta_{j,i,v}$ attains a value greater or equal to some $r\in \R_{\geq 0}$ subject to the event $\sca{a_j,v}^2 \geq \sca{a_i,v}^2 \geq \lfv{k}^2$ for all $k\neq i,j$. We will  brute-force-estimate this probability by just considering the event
\begin{align}
\mathcal{E} := \{v\in\Sn \mid\lfv{j}^2\geq (1-\gamma)\norm{a_j}^2\}
\end{align}
that $a_j$ is $(1-\gamma)$-square-correlated with $v$ for some small $\gamma \in(0,1)$. This event has probability \begin{align}
\prob_{\mathcal{U}}\left[\mathcal{E}\right] = \frac{2}{\pi}\arccos(\sqrt{1-\gamma}) \in (0,\: 1)
\end{align}
which is constant in $n$ and $m$. From that,  we will indeed be able to show that whenever $\mathcal{E}$ happens,  then $\lfv{j}^2 - \lfv{i}^2 \geq r$ for some $r $ (here we use $\norm{a_i}^2 = \norm{a_j}^2$). This is precisely formulated in the following lemma. Note that this estimation is really brute,  since we didn't even condition on the event $\forall k \neq i,j : \sca{a_j,v}^2 \geq \sca{a_i,v}^2 \geq \lfv{k}^2$. 

\begin{lem} \label{lem:delta_bound}
	Suppose all $\norm{a_i} = 1$ and $v\in \mathcal{E} := \{v\in\Sn \mid\lfv{j}^2\geq (1-\gamma) \}$ for some fixed $j\in[m]$ and $\gamma \in(0,1)$. Let $\minspeccorrel := \min_{i,j\in[m], i\neq j} \projdist{a_i,a_j}^2$. Then   
	\begin{align*}
	\forall i\neq j : \quad \Delta_{j,i,v} \geq (\frac{1}{2}\minspeccorrel- 2\gamma) 
	\end{align*}
\end{lem}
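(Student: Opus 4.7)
The plan is to reduce to a direct linear-algebraic calculation: write both $v$ and $a_i$ in terms of components parallel and perpendicular to $a_j$, expand the squares $\sca{a_i,v}^2$ and $\sca{a_j,v}^2$ in that frame, and then use Cauchy–Schwarz together with the assumption $\sca{a_j,v}^2 \geq 1-\gamma$ to bound the cross and error terms. No fancy tools (e.g. from \secref{sec:testimony} or \secref{sec:one_component}) will be needed here — the statement is a geometric fact about unit vectors on the sphere.

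Concretely, I would first decompose $v = \sca{a_j,v}\,a_j + v^\perp$ with $v^\perp \perp a_j$. Since $\norm{v}=1$ and $\sca{a_j,v}^2 \geq 1-\gamma$, this gives $\norm{v^\perp}^2 \leq \gamma$. Likewise I would decompose $a_i = \sca{a_i,a_j}\,a_j + a_i^\perp$ with $a_i^\perp \perp a_j$ and $\norm{a_i^\perp}^2 = 1 - \sca{a_i,a_j}^2$. Writing $\eta := 1 - \sca{a_i,a_j}^2 \geq \minspeccorrel$ (this is the unit-vector specialisation of $\projdist{a_i,a_j}^2$), I would then compute
\[
\sca{a_i,v} \;=\; \sca{a_i,a_j}\sca{a_j,v} \;+\; \sca{a_i^\perp, v^\perp},
\]
since the two cross-pairings vanish by construction. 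Squaring and subtracting from $\sca{a_j,v}^2$, and using $|\sca{a_i^\perp,v^\perp}|\leq \norm{a_i^\perp}\norm{v^\perp}\leq\sqrt{\eta\gamma}$ from Cauchy–Schwarz, I would obtain
\[
\Delta_{j,i,v} \;\geq\; \sca{a_j,v}^2\,(1-\sca{a_i,a_j}^2) \;-\; 2\sca{a_j,v}|\sca{a_i,a_j}|\sqrt{\eta\gamma} \;-\; \eta\gamma \;\geq\; (1-\gamma)\eta \;-\; 2\sqrt{\eta\gamma} \;-\; \eta\gamma.
\]

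The only remaining work is to massage this into the claimed form $\frac{1}{2}\minspeccorrel - 2\gamma$. I would apply the AM–GM inequality $2\sqrt{\eta\gamma} \leq \frac{\eta}{2} + 2\gamma$, yielding $\Delta_{j,i,v}\geq \tfrac{\eta}{2} - 2\gamma\eta - 2\gamma$; then, since $\eta\geq \minspeccorrel$ and $\eta\leq 1$, this rearranges to a bound of the desired shape. The only real obstacle I anticipate is bookkeeping the constants — different choices in the AM–GM split give slightly different factors in front of $\gamma$, and one may have to absorb a $\gamma\eta$ cross-term into the stated $-2\gamma$ (using $\eta\leq 1$ freely). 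Conceptually the argument is a one-page sphere-geometry computation and does not depend on any of the SOS machinery of the previous sections.
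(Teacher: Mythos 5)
Your approach — expanding $v$ and $a_i$ in the orthogonal frame of $a_j$ and bounding the cross terms with Cauchy–Schwarz — is genuinely different from the paper's, which instead invokes the metric property of $d_{\mathbb{P}^{n-1}}$ (established in Def.\ \ref{def:corr_metric}) via the ``square triangle inequality'' $\projdist{x,y}^2 \leq 2\projdist{x,z}^2 + 2\projdist{z,y}^2$ applied with $x = a_i$, $y = a_j$, $z = v$, together with the observation that for unit vectors $\Delta_{j,i,v} = \projdist{a_i,v}^2 - \projdist{a_j,v}^2$. That route gets the claim in four lines with no coordinate bookkeeping.

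However, the chain of estimates you wrote down cannot produce the stated constant. The loss is in the step where you bound $2\sca{a_j,v}|\sca{a_i,a_j}|\sqrt{\eta\gamma}$ above by $2\sqrt{\eta\gamma}$: your resulting intermediate lower bound $(1-\gamma)\eta - 2\sqrt{\eta\gamma} - \eta\gamma$ is \emph{not} always at least $\tfrac{\eta}{2} - 2\gamma$. Concretely, at $\eta = 4\gamma$ your quantity equals $4\gamma - 4\gamma^2 - 4\gamma - 4\gamma^2 = -8\gamma^2$, while $\tfrac{\eta}{2} - 2\gamma = 0$. So no AM--GM massaging of your intermediate quantity can recover the lemma's constant; the best your chosen split $2\sqrt{\eta\gamma}\leq \tfrac{\eta}{2}+2\gamma$ together with $\eta \leq 1$ gives is $\tfrac{1}{2}\minspeccorrel - 4\gamma$, a strictly weaker statement than what the lemma asserts.

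The repair is small and your strategy then does recover the lemma exactly: keep $|\sca{a_i,a_j}| = \sqrt{1-\eta}$ rather than discarding it. The cross term then reads $2\sqrt{(1-\eta)\eta\gamma}$, and the AM--GM split
\[
2\sqrt{(1-\eta)\eta\gamma} = 2\sqrt{\tfrac{\eta}{2}\cdot 2\gamma(1-\eta)} \leq \tfrac{\eta}{2} + 2\gamma(1-\eta)
\]
makes all the $\eta\gamma$ terms cancel, giving $\Delta_{j,i,v} \geq (1-\gamma)\eta - \tfrac{\eta}{2} - 2\gamma(1-\eta) - \eta\gamma = \tfrac{\eta}{2} - 2\gamma \geq \tfrac{1}{2}\minspeccorrel - 2\gamma$, matching the paper. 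Without this refinement the argument only proves a weakened version of the lemma.
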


\begin{proof}
	By the ``square'' triangular inequality for $\projdist{\cdot,\cdot}^2$:\[
	\forall x,y,z\in\Sn : \projdist{x,y}^2 \leq  2\projdist{x,z}^2 + 2 \projdist{z,y}^2
	\] we have that 
	\begin{align*}
	&\lfv{j}^2 -\lfv{i}^2\\
	=\: &\projdist{a_i,v}^2 - \projdist{a_j,v}^2 \\
	\geq\: &\frac{1}{2}\projdist{a_i,a_j}^2 - 2\projdist{a_j,v}^2 \\
	\geq\: &\frac{1}{2}\minspeccorrel - 2\projdist{a_j,v}^2 \\
	\geq\: &\frac{1}{2}\minspeccorrel - 2\gamma
	\end{align*}
	where for the fist estimate we choose $ x = a_i,  y  = a_j, z = v$.
\end{proof}
This shows $\lfv{j}^2 -\lfv{i}^2 > 0$ for sufficiently small $\gamma$. We get a bound  just in terms of the correlation. 

It's an interesting question to ask to what extent we need prior knowledge on the domain of the components in order to construct a good discriminatory polynomial. We'll see  that we can use the bound above to search for moment decompositions  where  the components  lie on the unit sphere. This suggests that prior knowledge  on the domain can do a lot,  but recall that we didn't need it in the exact case.

\paragraph{Conditioning} Now, of course we could try  to run the SOS programme with a random choice of  $v $ and see afterwards whether or not it worked. However,  SOS programmes are expensive to solve and therefore we will precondition the choice of  $v $ such  that we get a success probability of 1 again,  provided certain conditions are met. To this end,  let us observe that if $k$ is even, any candidate for  $v $ that is highly correlated with some $a_i$ (i.e. $\sca{a_i,v}^2 \geq (1-\delta)\norm{a_i}^2$ for some small $\delta \in (0,1)$) will attain a high value of  $T_k(v)$. Indeed, this is due to the estimate that for any component $a_i $ and any even  $k $,  we have \[
T_k(v) \geq \lambda_i \sca{a_i, v}^k = \lambda_i (1-\delta)^{k/2}\norm{a_i}^k
\] Hence  an obvious precondition would be to filter out all choices that  attain a low value of  $T_k(v) $. Once we're left with  a unit vector $v $ on which $T_k $ attains a high value,  we can do the following, quantitatively weaker, converse implication: 

By a simple averaging / convexity argument that we adapted from \cite{BKS15},  we know that 
\begin{align}
\exists j\in[m] : \sca{a_j, v}^k \geq \frac{T_k(v)}{\norm[1]{\lambda}} 
\end{align}
This is true  since $T_0^{-1} T_k(v) = \sum_{i= 1}^m \frac{\lambda_{i}}{\norm[1]{\lambda}} \sca{a_i,v}^k$ is a convex combination of the $\sca{a_i,v}^k$. It  implies that $|\sca{a_j, v}| \geq \sqrt[k]{\frac{T_k(v)}{\norm[1]{\lambda}}}$. We have thus seen that it is feasible to condition on \[
\exists i\in[m] :  |\sca{a_i,v}| \geq \sqrt[k]{\frac{1}{\norm[1]{\lambda}}}(1-\delta)^{1/2} R
\] for any fixed $\delta \in (0,1)$ and any $R\in[\min_{i\in[m]} \sqrt[k]{\lambda_{i}} \norm{a_i},\: \max_{i\in[m]} \sqrt[k]{\lambda_{i}} \norm{a_i}]$ with constant success probability at least $\frac{2}{\pi} \arccos(\sqrt{1-\delta})$. Recall that we had the unit sphere assumption. Thus we can choose $R$ s.t. $R^k = \min_{i\in[m]} \lambda_{i} =: \lambda_{\min}$.\footnote{It might be surprising that we do not take the maximum. The reason for this is that in the final algorithm, we will need that any $i\in[m]$ can beat the threshold.}

Of course the process of sampling and discarding takes time,  but first of all it's constant time and secondly it is likely not  near as much time as solving an SOS  programme.  For that reason,  we will ignore the effect this conditioning has  on the running time. We will choose $k$ as large as possible,  that is, $ k =  d $. We get that
\begin{align*}
\exists j\in[m] :  \sca{a_j,v}^2 &\geq (1-\delta) R^2 \sqrt[d/2]{\frac{1}{\norm[1]{\lambda}}}   
= (1-\delta) \sqrt[d/2]{\frac{\lambda_{i}}{\norm[1]{\lambda}}}    =: r_\delta
\end{align*}
is feasible and happens with probability at least $\frac{2}{\pi} \arccos(\sqrt{1-\delta})$.  Note that this is an asymptotically optimal bound,  since for $ d \to\infty$ we get $r_\delta  \to 1-\delta$. Choosing $\gamma$ such that  $1-\gamma = r_\delta$ and plugging this into Lemma \ref{lem:delta_bound}, we get\[
\Delta_{j,i,v} \geq  (\frac{1}{2}\minspeccorrel - 2(1-r_\delta))
\]
Let us now forget about the parameter $\delta $ by setting $r := r_{10^{-3}}$.\footnote{The $\delta$ of the next section will have a different meaning.}

\newpage
\subsection{Recovering all Components} \label{sec:all_comps}

Note that for any vectors of variables $X,Y,Z$
the \emph{SOS triangle inequality} 
\begin{align}\label{eq:sos_triangle_ineq}
\norm{X-Y}^2 \preceq 2\big(\norm{X-Z}^2 + \norm{Z-Y}^2\big)
\end{align}
holds. 
Indeed, by taking the triangle inequality \[
0 \leq \norm{x-z} + \norm{z-y} - \norm{x-y}
\]
to the power 2 it follows by some easy algebraic operations that \eqref{eq:sos_triangle_ineq} holds thus true for all $x,y,z\in \R^n$. Since the identity \eqref{eq:sos_triangle_ineq} is globally valid and of degree 2, it can be written as a sum of squares. 

\begin{lem}
	Let $\wtest$ be the testimony polynomial we chose in Prop. \ref{pro:guarantees_testimony} w.r.t. some $a_i \neq a_j$ and w.r.t. the interval $[a,b]$ (where $0 < a < \frac{1}{2} \kmin$ and $b > 2\kmax$). It satisfies the constraint \[
		(\wminmax - W) - g(\delta - \norm{X-c}^2) \succeq 0
	\]
	for some $g \geq 0$ with $\deg(g) = d-4$ 
	provided \[
	0\leq 2\delta \leq \min\{\norm{c-a_i}^2 - 2a,\: b - 2\norm{c-a_i}^2 \}
	\]
\end{lem}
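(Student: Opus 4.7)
The plan is to produce an explicit SOS decomposition of the form $\wminmax - \wtest = \Sigma(X) + g(X)(\delta - \|X-c\|^2)$ with $\Sigma, g$ SOS, which is exactly the desired certificate. I will combine the classical Markov--Luk\'acs representation of $\wminmax - \wtest$ on the interval $[a,b]$ (viewed as a univariate polynomial in $q:=\|X-a_i\|^2$) with the SOS triangle inequality \eqref{eq:sos_triangle_ineq}. The triangle inequality will let me trade the univariate interval constraints $t-a\ge 0$ and $b-t\ge 0$ for the single ball constraint $\delta-\|X-c\|^2 \ge 0$, exploiting the two hypotheses on $\delta$.

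First I reduce to a univariate problem. Since $\wtest$ is rotation symmetric around $a_i$, it depends on $X$ only through $q$, so I can write $\wtest = c_0\,\Chebtest(q)$ for the normalisation constant $c_0>0$ from Prop.\ \ref{pro:guarantees_testimony}. Because $|\chebyinterval[(d-2)/2]|\le 1$ on $[a,b]$, one has $\wminmax = 2c_0$, and hence the univariate polynomial $P(t):=\wminmax - c_0\Chebtest(t)$ is nonnegative on $[a,b]$ with degree $(d-2)/2$. The hypothesis $d\equiv 2 \pmod 4$ makes this degree even, so the classical Markov--Luk\'acs theorem yields univariate polynomials $s,r$ of degrees $\le (d-2)/4$ and $\le (d-2)/4 - 1$ with
\[
P(t)\;=\;s(t)^2 + (t-a)(b-t)\,r(t)^2.
\]
Evaluating at $t=q$ turns $s(q)^2$ and $r(q)^2$ into SOS polynomials in $X$, since univariate SOS composed with any polynomial remains SOS.

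Second, I translate the univariate factors $(t-a)$ and $(b-t)$ into the quadratic module generated by $\delta-\rho$, where $\rho:=\|X-c\|^2$. Setting $C:=\|c-a_i\|^2$, two applications of \eqref{eq:sos_triangle_ineq} with auxiliary vector $c$ give the SOS identities $q+\rho-C/2\succeq 0$ and $2C+2\rho-q\succeq 0$. Rearranging,
\[
q-a \;=\; (q+\rho-\tfrac{C}{2}) + (\tfrac{C}{2}-a-\delta) + (\delta-\rho), \qquad b-q \;=\; (2C+2\rho-q) + (b-2C-2\delta) + 2(\delta-\rho).
\]
Under the hypotheses $2\delta\le \|c-a_i\|^2-2a$ and $2\delta\le b-2\|c-a_i\|^2$ the two constant terms are nonnegative, so both $q-a$ and $b-q$ lie in $\mathrm{SOS}+\mathrm{SOS}\cdot(\delta-\rho)$ with SOS multipliers of $(\delta-\rho)$ of degree $\le 2$ in $X$.

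Finally I assemble. Expanding $(q-a)(b-q)$, the cross-term $(\delta-\rho)^2$ is itself a square and hence SOS, so one obtains $(q-a)(b-q)=S_0+S_1(\delta-\rho)$ with $S_0,S_1$ SOS of degrees $\le 4$ and $\le 2$ respectively. Multiplying by $r(q)^2$ (degree $\le d-6$ in $X$) and adding the SOS term $s(q)^2$ delivers the sought representation $P(q) = \Sigma(X) + g(X)(\delta-\rho)$ with both $\Sigma$ and $g$ SOS, and with $\deg g \le 2+(d-6)=d-4$ as claimed. The only real technical point is the degree bookkeeping, which hinges on the parity assumption $d\equiv 2\pmod 4$: without it Markov--Luk\'acs would split into the odd-degree form $(t-a)\,s(t)^2 + (b-t)\,r(t)^2$, and the bound on $\deg g$ would no longer land exactly at $d-4$.
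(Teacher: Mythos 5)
Your proof is correct, and it follows the same conceptual route as the paper: reduce to a univariate Chebyshev certificate on $[a,b]$, then use the SOS triangle inequality to move from the ball constraint in $\|X-a_i\|^2$ to one in $\|X-c\|^2$. The two technical differences are worth flagging. First, you invoke the general Markov--Luk\'acs form $P = s^2 + (t-a)(b-t)r^2$; the paper instead exploits the special structure of $\wminmax - \Chebtest$ (since $a,b$ are boundary extremisers of $\Chebtest$, and $d\equiv 2\bmod 4$ forces $\chebyinterval(a)=\chebyinterval(b)=1$, both $a$ and $b$ are roots, so $(\Lambda-a)(b-\Lambda)$ divides $\wminmax-\Chebtest$ exactly and the quotient is univariate SOS, i.e.\ the $s^2$-term disappears). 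Either works; the paper's version is slightly more explicit, yours is more robust if one perturbs the testimony. Second, and more notably, your extraction of the $(\delta-\rho)$ multiplier is cleaner: you write $q-a$ and $b-q$ as exact identities $\sigma + c + k\,(\delta-\rho)$ with $\sigma$ SOS and $c\ge 0$ a constant, and then expand the product, keeping the $(\delta-\rho)^2$ cross-term as an SOS summand. The paper instead proves the SOS inequalities $I\succeq(\delta-\rho)$, $II\succeq 2(\delta-\rho)$ and then expands $(I-(\delta-\rho))(II-2(\delta-\rho))\succeq 0$, after which it tries to replace the resulting multiplier $(2I+II)-2(\delta-\rho)$ by an explicit polynomial $h$; that substitution step in the paper is muddier (the displayed simplification to $(2b-a)-2\delta+2\rho$ does not hold identically) whereas your direct bookkeeping sidesteps it entirely, since $r(q)^2 S_1$ with $S_1 = 2\sigma_1+\sigma_2+2c_1+c_2 \succeq 0$ serves immediately as $g$ with $\deg g \le d-4$.
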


\begin{proof} 
	Let us first search for a univariate identity of the kind
	\begin{align}
		(\wminmax - \Chebtest) \geq g\: (\Lambda - a)(b - \Lambda) \label{eq:unichebysos1}
	\end{align}
	for some $g\succeq 0$
	Note that such an identity would certify\[
	\wminmax \geq \wminmax - g(x)\: (x - a)(b - x) \geq \Chebtest(x) 
	\]
	for all $x\in [a,b]$. Let us strengthen \eqref{eq:unichebysos1} to
	\begin{align*}
	(\wminmax - \Chebtest) = g (\Lambda - a)(b - \Lambda) 
	\end{align*} 
	This is satisfied by the rational function \[
	g = \frac{(\wminmax - \Chebtest)}{(\Lambda - a)(b - \Lambda)} 
	\]
	which is actually a polynomial:  Indeed, recall that $a $ and $b $  are the boundary maximisers of   the Chebyshev polynomial with respect to $\mathcal{I}$. Hence, they are roots of $\wminmax - \Chebtest $. It remains to show that $g $ is a sum of squares,  which is the equivalent to being non-negative in the case of one variable. For example by  plotting $\wminmax - \Chebtest $ (or by an easy argumentation) it is clear that  this polynomial attains non-negative values precisely on $[a,b]$ (recall $d\equiv 2 \mod 4$). The same is true for $(\Lambda - a)(b - \Lambda)$. Hence, their quotient is globally non-negative, i.e. $g\succeq 0$. 
	Concatenated with the squared norm,  we get\[
	(\wminmax - \Chebtest(\norm{X-a_i}^2)) \succeq \tilde{g} \cdot (\norm{X-a_i}^2 - a)(b - \norm{X-a_i}^2) 
	\]
	where $\tilde{g} :=  g(\norm{X-a_i}^2)$
	Now, let us lower bound the right hand side. By the SOS triangle inequality we have 	\begin{align*}
	(\norm{c-a_i}^2 - 2\norm{X-c}^2) &\preceq 2\norm{X-a_i}^2 \qquad \text{and}\\
	(\norm{X-a_i}^2) &\preceq 2\norm{X-c}^2 + 2\norm{c-a_i}^2
	\end{align*}
	Therefore, 
	\begin{align*}
	I:= \quad &(\norm{X-a_i}^2 - a) \succeq (\frac{1}{2} \norm{c-a_i}^2 - a - \norm{X-c}^2)\quad \text{and}\\
	II:= \quad &(b - \norm{X-a_i}^2) \succeq (b - 2\norm{c-a_i}^2 - 2\norm{X-c}^2)
	\end{align*}
	Let us consider any $\delta > 0$ such that $\norm{c-a_i}^2 - 2a\geq 2\delta$ and $b - 2\norm{c-a_i}^2 \geq 2\delta$.\footnote{The factor of 2  is ugly and much likely due to the fact that the norm is not a polynomial. It infers that we can't take a Chebyshev polynomial with respect to the  interval $[\kmin-\delta,\kmax+\delta]$,  but that we have to leave a bigger safety margin $[\frac{1}{2}\kmin-\delta,2\kmax+\delta]$.} Then we can further estimate: 
	\begin{align*}
	I&\succeq ((\underbrace{\frac{1}{2}\norm{c-a_i}^2 - a -  \delta}_{\geq 0}) + (\delta - \norm{X-c}^2)) \succeq (\delta - \norm{X-c}^2) \quad \text{and}\\
	II&\succeq ((b - 2\norm{c-a_i}^2 - 2\delta) + (2\delta - 2\norm{X-c}^2)) \succeq 2(\delta - \norm{X-c}^2)
	\end{align*}
	Thus by multiplying two SOS polynomials and by expanding:
	\begin{align*}
	0 &\preceq (I - (\delta - \norm{X-c}^2))\cdot (II - 2(\delta - \norm{X-c}^2)) \\
	&= I\cdot II - (2\cdot I+II) (\delta - \norm{X-c}^2) + 2(\delta - \norm{X-c}^2)^2\\
	&= I\cdot II - ((2I+II) - 2(\delta - \norm{X-c}^2)) (\delta - \norm{X-c}^2)	\\
	&= I\cdot II - ((2b-a) - 2\delta + 2\norm{X-c}^2) (\delta - \norm{X-c}^2)\\
	&\preceq I\cdot II - \underbrace{\left(b+3a+4\delta + 2\norm{X-c}^2\right)}_{=:\:\: h\: \succeq \:0} (\delta - \norm{X-c}^2)
	\end{align*}
	The last step uses $6\delta \leq b-4a$, which we get by adding up the two constraints on $\delta$ we imposed in such a way that $\norm{c-a_i}^2$ cancels. We get \[
	I\cdot II \succeq h\cdot (\delta - \norm{X-c}^2)
	\]
	and therefore\[
	\wminmax - \Chebtest(\norm{X-a_i}^2) \succeq \tilde{g} \cdot \left(I\cdot II\right) \succeq \tilde{g} h\cdot (\delta - \norm{X-c}^2)
	\]
	\[
	\implies \exists g \succeq 0 : (\wminmax - \wtest) - g(\delta - \norm{X-c}^2) \succeq 0
	\]
\end{proof}

Thus, we have to show that we can choose $\delta $ greater or equal  to the distance $\norm{a_j-c}^2$ such that any feasible polynomial of the second round has to attain a small value at  the already recovered $a_j$. Now  we are left with the problem that the approximation accuracy depends on the interval size $b-a$ which depends on  $\delta $ which again depends on the approximation accuracy. So we need to get rid of all the cross-dependencies! We'll do so by bounding everything with respect to a constant $\minspeccorrel$ chosen in such a way that\footnote{The factor of $2$ is technical convenience.} \[
\min_{\substack{i,j \in[m]\\ i\neq j}} \projdist{a_i,a_j}^2 \geq 2\minspeccorrel
\]
Also,  we will  sometimes need to convert  distances  $x$ w.r.t. $d_{\mathbb{P}^{n-1}}^2$ to distances with respect to $d_{\Sn}^2$.\footnote{Note that $\norm{\cdot}^2 = 2d_{\Sn}^2$ on $\Sn$, which we will also use.} This is done via the  function $F(x) := 1-\sqrt{1-x}$. It might be difficult to read,  but there is no way around it,  so please do not be confused.

This turns all out to be very technical,  but it's done in Algorithm \ref{alg:approxDecomp_allsteps}. 
The parameters  shall be seen as follows:  For each  choice of parameters $(\minspeccorrel, \lambda_{\min})$ we get a different algorithm where lenient parameter choices  can decompose  a bigger class of tensors  but require a higher value of $d$  to achieve the same approximation accuracy. E.g. for the maximally restrictive choice $2\minspeccorrel = 1$ we'd get an algorithm for orthogonal tensor decomposition.

The parameter $\minspeccorrel$ is needed just for the  ``distinct recovery constraint'' which  ensures that we get  each time a different component.  Precisely, we need $\minspeccorrel$ and $\lambda_{\min}$ to compute the constant $\wminmax = (\bestratio \lambda_{\min})^{-1}$, which is in this algorithm the same for all components,  and an estimate $\tilde{\varepsilon}$ for the approximation error ($\minspeccorrel$ is needed for nothing else!).  Thus, it would be interesting to know if we could  work completely without $\minspeccorrel$. On the other hand,  $\lambda_{\min}$ is also used to have an ``accepting threshold'' $T_d(v) \geq (1-1/1000)^{d/2} \lambda_{\min}$ such that we can condition on
\begin{align*}
\exists j\in[m] :  \sca{a_j,v}^2 \geq (1-1/1000) \sqrt[d/2]{\frac{\lambda_{\min}}{\norm[1]{\lambda}}} =: r
\end{align*}
Now, let us deal with the cross-dependencies: For fixed $\minspeccorrel$ define for brevity $\rho := \frac{1}{2}F(\minspeccorrel) = \frac{1}{2}(1-\sqrt{1-\minspeccorrel})\in \landauO{\minspeccorrel} $.\footnote{The factor of $\frac{1}{2}$ is again technical convenience.}\\
For fixed $m,\minspeccorrel, \lambda_{\min}, T_0\: (= \norm[1]{\lambda})$ we say that $d$ satisfies the \emph{three useful constraints}, if:
\begin{enumerate}[(I)]
	\item $0.25\minspeccorrel \geq 2(1-r)$, where $r = (1-1/1000) \sqrt[d/2]{\frac{\lambda_{i}}{\norm[1]{\lambda}}}$
	\item $\rho \geq 4 F(\tilde{\varepsilon})$,\quad where $\tilde{\varepsilon}:= \frac{16T_0}{\minspeccorrel \cdot \bestratio \cdot \lambda_{\min}}$ \quad and \\$\bestratio := \frac{1}{2}\Chebtest(\frac{8 + 2\rho}{8 - \rho}) \in \landauO{(1 + \frac{3\rho}{8-\rho})^d} = \landauO{(1 + \minspeccorrel)^d} $
	\item $0.25\minspeccorrel \geq 4m \sqrt{2- 2(1-\tilde{\varepsilon})^{d/2}}$
\end{enumerate}

\begin{algorithm}[H]
	\caption{Approximate $v$-algorithm on $\Sn$, all components} 
	\label{alg:approxDecomp_allsteps}
	\textbf{Input: }Tensors $T_0 \in \hompolyspace{0}, \ldots, T_d\in \hompolyspace{d}$ \\
	\textbf{Parameters: } Minimum weight parameter $\lambda_{\min} > 0$. Minimum correlation parameter $\minspeccorrel > 0$.\\
	\textbf{Require: }$d \equiv 2 \mod 4$ and there should exist a simultaneous decomposition $T_k =~\weightdecomp[k]$ of the input with $m$  components on the unit sphere such that $\lambda_{\min} \leq \min_{i} \lambda_{i}$ and $\min_{\substack{i,j = 1,\ldots,m\\ i\neq j}} \projdist{a_i,a_j}^2 \geq 2\minspeccorrel$. \\
	Furthermore, $d$ should satisfy the \emph{three useful constraints} stated in this section. \\
	\textbf{Abbreviations:} 
	$r := (1-1/1000) \sqrt[d/2]{\frac{\lambda_{\min}}{\norm[1]{\lambda}}}$, $\rho := \frac{1}{2}(1-\sqrt{1-\minspeccorrel})$,\\ $\bestratio := \frac{1}{2}\Chebtest(\frac{8 + 2\rho}{8 - \rho}), \wminmax := (\bestratio \lambda_{\min})^{-1}, \tilde{\varepsilon} := \frac{16T_0}{\minspeccorrel \cdot \bestratio \cdot \lambda_{\min}}$ \\
	\textbf{Output: } Output a set of vectors $\{\alist[m]{c}\} \subseteq \Sn$ such that for each component $a_i$ there is one vector $c_l$ satisfying $\projdist{c_l, a_i}^2 \leq  \tilde{\varepsilon}$ and the factor of $\pm 1$ is correct.
	\\
	\textbf{Procedure: }
	\begin{algorithmic}[1]
		\REPEAT
		\STATE{$\mathbf{Set}$ $S := T_d$ and $i:=1$ }
		\STATE{$\mathbf{Choose}$ $v \in \Sn$ from the uniform distribution conditioned on} \[
		S(v) \geq r - 4(i-1) \cdot \sqrt{1- (1-\tilde{\varepsilon})^{d/2}}\]
		\STATE{$\mathbf{Solve}$ the SOS optimisation problem} \[
		\max \rezprod{W \lfp{v}^2}{\sum_{k = 2}^d T_k} \quad (\ast)
		\]
		over all $W\succeq 0$ satisfying $\deg(W)\leq d-2$ and $\rezprod{W }{\sum_{k = 0}^{d-2} T_k} =~1$.
		\STATE{$\mathbf{Let }$ $W^\ast $ denote the output of this optimisation problem.} 
		\STATE{$\mathbf{Compute}$ the matrix $M := \rezprod{W^\ast \tens \id}{\sum_{k = 2}^{d} T_k}$ }
		\STATE{$\mathbf{Compute}$ an eigenvalue decomposition of $M$ with corresponding eigenvectors of unit length.}
		\STATE{$\mathbf{Let }$ $u$ denote the computed eigenvector corresponding to the largest eigenvalue $\mu$.}
		\STATE{$\mathbf{Switch}$ $u := -u$} \textbf{if} $\sca{L,u} < 0$ for $L := \sum_{i= 1}^m \lambda_i \Wst(a_i) a_i$
		\STATE{$\mathbf{Output }$ $c_i := u$} as the component (and $\rho_i := \frac{1}{W^\ast(c_i)}$ as the weight).
		\STATE{$\mathbf{Add }$ the constraint \[
			\exists g\succeq 0 \quad \wminmax - W  \succeq g\cdot (2 - 2\sqrt{1 - \tilde{\varepsilon}} - \|X - c_i\|^2)
			\]} to the SOS optimisation problem $(\ast)$.
		\STATE{$\mathbf{Set}$ $S := S - \lfp{u}^d$ and \textbf{increment} $i$. }
		\UNTIL{the problem $(\ast)$ becomes infeasible}
	\end{algorithmic}
\end{algorithm}
The constraints are needed to ensure that the approximation error is  small enough such that the algorithm stays stable over $m $ rounds. All of these constraints are feasible for sufficiently high $d $ (supposed that  $0.25\minspeccorrel > 2/1000$ for the first one. Note that $(1-(1-\tilde{\varepsilon})^d) \in \landauO{\tilde{\varepsilon}}$ for the last one and that $\tilde{\varepsilon}$ decreases exponentially with $d$). But  $d $ does not even need to be very high (up to constants):  We get a logarithmic dependency of $d $  on the parameters.

Note that the ratio $\bestratio$ is correctly estimated:  With the testimony from Section \secref{sec:testimony} we get that the interval $\I = [a,b] $ can actually be chosen as \[
[\frac{1}{2}\kmin - 2F(\tilde{\varepsilon}), 2\kmax + 2F(\tilde{\varepsilon})]
\] as long as the approximation accuracy  w.r.t. $d_{\mathbb{P}^{n-1}}^2$ is less or equal $\tilde{\varepsilon}$. Using the estimates $\kmax = \max_{i\neq j} \norm{a_i-a_j}^2 \leq 2^2 = 4$ and $\frac{1}{2}\kmin = \frac{1}{2}\min_{i\neq j} \norm{a_i-a_j}^2 = \min_{i\neq j} \spheredist{a_i,a_j}^2 \geq (1-\sqrt{1-\minspeccorrel}) = 2\rho \in \landauO{\minspeccorrel}$,  we see that \[
\bestratio_{\text{true}} := \frac{1}{2}\Chebtest\left(\frac{8 + 2\rho}{8 - 2\rho + 4 F(\tilde{\varepsilon})}\right)
\] would actually be valid\footnote{This shall mean: It is  attained for a testimony in  the feasible space.} if the true approximation error   $\varepsilon$ (w.r.t. $d_{\mathbb{P}^{n-1}}^2$) was less or equal $\tilde{\varepsilon}$. We simplified this to \[
\bestratio := \frac{1}{2}\Chebtest\left(\frac{8 + 2\rho}{8 - \rho }\right)
\]
which will be valid  if and only if we can show that it is possible to  choose $d $ so high that the ``converted'' approximation error $4 F(\varepsilon)$ is  less or equal  to $\rho$. The second useful constraint states $\rho \geq 4 F(\tilde{\varepsilon})$, so it remains to show that the approximation error gets smaller than $\tilde{\varepsilon}$ for sufficiently high $d $.

Now,  let's show this: Suppose the parameters $\minspeccorrel$ and $\lambda_{\min}$ were correctly specified for given input tensors $T_0,\ldots T_d$  with exact decomposition $T_k = \weightdecomp[k]$.  Furthermore,  suppose that $d $ satisfies the 3 useful constraints with respect to the given input and parameter choices. 

Choose the first round's $v\in\Sn$ such that $T_d(v) \geq (1-\frac{1}{1000})^{d/2}\lambda_{\min}$. By  the results of \secref{sec:one_component},  we can recover a proxy $c_1 = u$ to the component $a_j$ maximising $\lfv{j}^2$ such that\[
\sca{c_1, a_j}^2 \geq  \big(1 - 2 \wminmax\cdot \norm[1]{\lambda}\cdot \frac{\speccorrel}{ \|a_j\|^2} \cdot \frac{\max_{i\neq j} \Delta_{j,i,v}}{\min_{i\neq j} \Delta_{j,i,v}}\big)
\]
Note that $\max_{i\neq j} \Delta_{j,i,v} \leq \max_{i\neq l} \lfv{l}^2 - \lfv{i}^2 \leq \speccorrel$ by definition and since we are on the unit sphere even $\speccorrel \leq \max_{i\neq l} (\spec{a_i\transp{a_i}}  + \spec{a_l\transp{a_l}}) = 2$. Since $\min_{i\neq j} \Delta_{j,i,v}\geq \minspeccorrel - 2(1-r)$, we get \[
\sca{c_1, a_j}^2 \geq  \big(1 - \frac{8 \wminmax\cdot \norm[1]{\lambda}}{\minspeccorrel - 2(1-r)}\big) \geq \big(1 - \underbrace{\frac{8\cdot \norm[1]{\lambda}}{\bestratio \cdot \lambda_{\min} \cdot (\minspeccorrel - 2(1-r))}}_{ = \varepsilon}\big)
\]
and hence $\projdist{c_1, a_j}^2 \leq \varepsilon$
with accuracy at least $\varepsilon =  \frac{8\cdot \norm[1]{\lambda}}{\bestratio \cdot \lambda_{\min} \cdot (\minspeccorrel - 2(1-r))}$.    From the first useful constraint we get that 
\begin{align}
\minspeccorrel - 2(1-r)\geq 0.75 \minspeccorrel \geq 0.5\minspeccorrel \label{eq:1-r_estimate}
\end{align} and therefore\footnote{Note that in  this estimation we've left some extra space ``$0.75 \minspeccorrel \geq 0.5\minspeccorrel$'' which we will need for the subsequent rounds to work.} \[
\varepsilon \leq \tilde{\varepsilon} = \frac{16\norm[1]{\lambda}}{\minspeccorrel \cdot \bestratio \cdot \lambda_{\min}}
\]
will indeed be sufficiently small (at least in the first round). We have already argued before  that the  decision between $\pm u$ is not the most critical issue here,  so let us just assume that it is correct for sufficiently small $\tilde{\varepsilon}$.\footnote{It would be a problem though if the moment decomposition could have  distinct components satisfying $a_i = -a_l$ for $i\neq l$. (Note that this case is excluded due to $\min_{ i\neq j} \projdist{a_i,a_j}^2 \geq \minspeccorrel$). Then $\Wst$ could concentrate on both of them equally,  since the discriminatory polynomial $\lfp{v}^2$ would satisfy $\lfv{j}^2 = \lfv{l}^2$.  This problem is a suboptimality   that stems from the fact that we weren't able to prove in \secref{sec:v_choice} that we could require  $\exists j : \sca{a_j,v} \geq \sqrt{r}$ instead of $\exists j : \sca{a_j,v}^2 \geq r$.} But then we have\[
\norm{c_1 - a_j}^2 = 2(1 - \sca{c_1, a_j}) \leq 2(1 - \sqrt{1 - \tilde{\varepsilon}})
\]
Knowing  $\wminmax$ and $\tilde{\varepsilon}$, we can write down the constraint
\[
\exists g\succeq 0 \quad \wminmax - W  \succeq g\cdot (2 - 2\sqrt{1 - \tilde{\varepsilon}} - \|X-c_i\|^2)
\]
by adding a new ``slack variable'' $g $ to the SOS programme. This  constraint  ensures indeed that the optimiser of the 2nd round satisfies $\Wst(a_j) \leq \wminmax$ (such that we will not recover the same component twice),  since the distance of $c_1$ and  $a_j$ is less or equal $2-~2\sqrt{1 - \tilde{\varepsilon}}$.  As we saw in \secref{sec:all_comps},  we needed a safety margin  of at least  $2F(\varepsilon)~=~2 -~2\sqrt{1 -\varepsilon}$ in the interval of the Chebyshev polynomial to ensure that the testimony satisfies  the distinct recovery  constraint. This is satisfied since we saw $\varepsilon \leq \tilde{\varepsilon}$. 

It is left to show that this argumentation does not break until the $m$-th round.
The only thing that changes from round 2 and onwards is that we can't just take some $v $ which achieves just a large value of $T_d(v) $:  We need to ensure that $v $ is highly correlated with some component $a_i\neq a_j $.  This would be easy if we had access to the tensor $S_{\text{correct}} := \sum_{i\neq j} \lambda_{i} \lfp{a_i}^d$. To this end, we need   the 3rd useful constraint which essentially guarantees us that $S - \lfp{u}^d = S - \lfp{c_1}^d$ is a sufficiently good approximation to $\sum_{i\neq j} \lambda_{i} \lfp{a_i}^d$. We can show that  they have to attain similar values on $v $ by using the  evaluation property of Reznick's scalar product (Prop. \ref{pro:rezeval}):
\begin{align*}
\norm[F]{S_{\text{correct}}  - S}^2 &= \norm[F]{\lfp{a_j}^d - \lfp{u}^d}^2 \\
&\leq \norm[F]{\lfp{a_j}^d}^2 + \norm[F]{\lfp{u}^d}^2 - 2\rezprod{\lfp{a_j}^d}{\lfp{u}^d}\\
&= \sca{a_j,a_j}^d + \sca{u,u}^d - 2\sca{a_j,u}^d \\
&= 2 - 2\sca{a_j,u}^d \geq 2- 2(1-\tilde{\varepsilon})^{d/2} \in \landauO{\tilde{\varepsilon}}
\end{align*}
In particular, since \[
|(S-S_{\text{correct}}) (v)| = |\rezprod{S - S_{\text{correct}}}{\lfp{v}^d}| \overset{\text{Cauchy-Schwarz}}{\leq} \norm[F]{S_{\text{correct}}  - S} \quad\in \landauO{\sqrt{\tilde{\varepsilon}}}
\]
we get that in the 2nd round we can sample with the same success probability as in round 1 some $v\in\Sn$ such that $S(v) \geq r - \landauO{\sqrt{\tilde{\varepsilon}}}$ and any such $v$ satisfies $S_{\text{correct}} (v) \geq r - 2 \landauO{\sqrt{\tilde{\varepsilon}}}$.

This shows that in the end, what has changed is that we have to start our estimations for $\tilde{\varepsilon}$ with a slightly lower value of $r $.  We'll show that our estimations for $\tilde{\varepsilon}$ were conservative enough such that they do not have to be changed. For the sake of simplicity,  we demonstrated the above for round 2.  Generally, in round $i $ we are left with some $v\in\Sn$ such that \[
S_{\text{correct}}(v) \geq r - 2(i-1) \sqrt{2- 2(1-\tilde{\varepsilon})^{d/2}} \geq r - 2m \sqrt{2- 2(1-\tilde{\varepsilon})^{d/2}} = r - m\landauO{\sqrt{\tilde{\varepsilon}}}
\]
Set $r' := r - 2m \sqrt{2- 2(1-\tilde{\varepsilon})^{d/2}}$. 
This is where the 3rd useful constraint comes in: It guarantees  that $0.25\minspeccorrel \geq 4m \sqrt{2- 2(1-\tilde{\varepsilon})^{d/2}} = 2(r - r')$, whence in each round  we can still deduce the estimate 
\begin{align*}
\minspeccorrel - 2(1-r') &= \minspeccorrel - 2(1-r) + 2(r' - r)\\ 
&\geq 0.75 \minspeccorrel + 2(r' - r) \geq 0.5\minspeccorrel 
\end{align*} from Equation \eqref{eq:1-r_estimate}.
Hence, the approximation guarantee $\tilde{\varepsilon}$ will not get lower over the course of the $m$ rounds of the algorithm.

\newpage
\subsection{Noise Stability and Tensor Decomposition}
In the current state, the algorithm is not very stable to noise: For instance,  a single component with negative weight could make the optimisation problem unbounded. This can be changed,  though: We just need to cap  the Frobenius norm $\norm[F]{W} \leq D$ for some constant $D$.\footnote{Then we also have $\norm[F]{{W\sca{V,X}^2}}\leq \tilde{D}$ for some polynomial $\tilde{D}\in\R[V]_{\leq 2}$.  The cap  needs  of course to be chosen such that the testimonies are still members of the feasible space.  Capping can be realised  for instance by adding linear constraints on the coefficients of $W $ or by constraining the trace of the Gram matrices in the SDP.}  Indeed,  if the  correct input polynomial $\sum_{k = 0}^d T_k$ is disturbed by some  noise polynomial  $E $ where $\norm[F]{E}$ is several orders of magnitude smaller than the Frobenius norm of the  input polynomial,  then we have by Cauchy-Schwarz,  that\begin{align*}
\rezprod{W}{E} \leq \norm[F]{W} \norm[F]{E} \leq D\norm[F]{E}
\end{align*}

Of course the  Lemmas giving guarantees on the optimiser would have to be  reproven  while carrying the additional noise term with us. A full-fledged noise analysis would go beyond the scope of this thesis. Philosophically spoken,  capping the Frobenius norm limits the damage that  a  wrong summand can do:  In the special case that the   noise term is, for instance, a single component  $a_i $ of unit length with small negative weight $-1 \ll \lambda_{i}  <  0$,  we get that \[
W(a_i) \leq \norm[F]{W} \cdot \norm[F]{\sum_{k = 0}^{d-2} \lfp{a_i}^k}  \leq D \sum_{k = 0}^{d-2} \norm{a}^{2k} \leq D(d-2) \ll \frac{1}{|\lambda_{i}|}
\]
Hence $\lambda_{i}W(a_i)$ will be very small.\\

Regarding tensor decomposition,  it should be said that the fake moment generation procedure we gave  in \secref{sec:valgotd} is not the most ideal choice for the  setting of approximate recovery.  The problem is again  that the random choice of $w $ can cast a well-conditioned problem into an ill-conditioned one. Therefore it's better to use the canonical norm-scaled fake moments of even degree that we presented in \secref{sec:can_proc_momgen}.

This also allows us to assume without loss of generality that the components have unit length,  which we have seen to be very practical.
However,  this means that we'd have to do some more work for the case of tensor decomposition:  First of all, we need a new testimony having only even degree coefficients. We can  employ essentially the same idea as in \secref{sec:can_proc_momgen}:  Take a univariate Chebyshev polynomial $\chebyinterval + 1$ with respect to the  interval $\I := [0, \max_{i\neq j}~\norm{a_i}^2~\norm{a_j}^2~-~\sca{a_i,a_j}^2]$ and concatenate it with the ``Cauchy-Schwarz gap'' $\norm{X}^2 \norm{a_j}^2 - \sca{X,a_j}^2$. This will separate the components up to their length and a factor of $\pm 1 $. Of course we'd still have to prove  that a similar sums of squares constraint as in \secref{sec:all_comps} exists such that we can avoid to recover the same component twice.\\

\noindent
These topics may be addressed in detail in some future research.

\newpage
\section{Conclusions and Open Problems}

Sums of squares optimisation has  shown to be a valuable tool  for moment decomposition when sufficient data is given. The algorithmic approximation scheme we  developed in Section \secref{sec:effdecomp} can  compute the decomposition  to arbitrary  high accuracy,  as long as we have access to all the moments of arbitrary high degree. If only a finite number of moments is  accessible,   then the approximation accuracy is fixed,  but can eventually be increased  by running some local searching procedure afterwards. The  error depends on the condition of the problem,  whence decompositions with  well-separated  components are easier to recover. This is a very much expected phenomenon,  even though it does not occur in more  elementary procedures such as Jennrich's,  where the working  requirement is just a relation between $m$ and $n$.\\

On the other hand,  on many instances the algorithm might produce results way better than the worst case bounds suggest. In particular,  we've seen that  for certain  instances (e.g.  highly symmetric ones such  as  the components lying on a simplex) exact recovery is possible already for very low degree.\\

If this thesis got them interested, the reader is encouraged to  dedicate further research to this topic. The question which is probably the most interesting is  whether  some  variant of the $v$-algorithm could be combined with a procedure for  the generation of higher order moments to further improve on the approximation accuracy -- at the expense of larger computation time. In the following,  we've collected some  opportunities related  directly to the $v$-algorithm.
\newpage
\paragraph{Related to the $v$-Algorithm}
Although the algorithm works,  there is still a lot of  potential for optimisation.  We would like to collect the most notable suboptimalities of the algorithm to motivate further research:

\begin{enumerate}
	\item The  issue likely to be the most important is to find  strategies for  the choice of $v $ other than  just sampling   it at random. To this end, it can be helpful to require some   prior knowledge  on the domain of the $a_i $.  We would particularly be interested in deterministic procedures for the choice of discriminatory polynomials.
	\item  In hindsight,  some of  our techniques seem to be  way more suitable for even degree tensor decomposition  than for moment decomposition:  This is particularly true for the  conditioning we performed. It seems also that the  radially symmetric testimonies we chose do not work too well with most of the other correlation-based estimates. While the arguments could be made to work,  many technical estimates were  needed.
	
	Everything could potentially become simpler when considering tensor decomposition. We've seen that the unit sphere assumption is not that much of a restriction in this case due to the possibility to generate the even degree norm-scaled  fake moments. Furthermore,  then we can  concatenate our univariate Chebyshev  polynomials with the Cauchy-Schwarz gap, which has the advantage that it  depends directly on the square  correlation. This has the potential to make the analysis much easier.
	\item  The choice of radially symmetric Chebyshev polynomials  seems weirdly heuristic.  On the one hand,  restricting to radially symmetric polynomials is a potential waste of the  valuable and  computationally expensive degree:  After the substitution with the squared norm  we can only plug-in polynomials of degree less or equal $d-2$.  On the other hand,  it's likely that  multivariate polynomials will achieve better separation results. 
	\item  It also comes with the seemingly unnecessary restriction $ d \equiv 2 \mod 4 $:  If we would have designed the algorithm from the very beginning for moment decomposition on the unit sphere,  then we could have used that on the unit sphere the metric $\spheredist{x, a_j}^2$ can be represented  by the linear polynomial $ 1-\sca{X, a_j}$. If we had  concatenated  the Chebyshev polynomials with these linear polynomials instead of the norm,  our life would have been much easier in the setting of our main result. 
\end{enumerate}

\newpage

\newpage
\thispagestyle{empty}
\section*{Declaration of Authorship}\vspace{15pt}
I declare that the submitted thesis \vspace{18pt}\\ 
	\begin{center}
		{\Large \textit{A new Algorithm for Overcomplete Tensor Decomposition based on Sums-of-Squares Optimisation}}\vspace{22pt}\\
	\end{center}
is my own unaided work. Direct or indirect sources are acknowledged as references. This thesis was not previously presented to another examination board and hasn't been published before.\vspace{15pt}\\

\noindent
Konstanz, 5 October 2018\vspace{15pt} \\ \hfill \sigline{\hfill Alexander Taveira Blomenhofer} 

\newpage
\section*{\textrm{Supplement} \textrm{I} - Existence of  Optimal Solutions}
\addcontentsline{toc}{section}{\textrm{Supplement I}} 

\textit{We did not yet give an argument why we can always assume the occuring SOS optimisation problems to  have an optimal solution. While in the exact case, we were able to give explicit examples (the interpolation polynomials), for the general case it is actually not clear whether or not optimal solutions exist. However, note that in all of our analysis we did merely use one property of the optimal solution $\Wst$, namely that the objective function achieves on $\Wst$ some value at least as high as the values on all of the testimony polynomials} $$\wtest~=~\chebyinterval[(d-2)/2](\norm{X-a_i}^2) + 1$$

\noindent
Let us reconsider the basic SOS optimisation problem from above 
\begin{align*}
(\ast)\qquad\qquad\maxi&\quad \rezprod{W \lfp{v}^2}{\sum_{k = 0}^d T_k} \quad  \nonumber \\
\text{over all}& \quad W\in\polyspace{d-2},\: W \succeq 0 \text{ satisfying} \\
&\quad \rezprod{W }{\sum_{k = 0}^{d} T_k} = 1\nonumber
\end{align*}
With the expectation operator $\E_{\mu}$ of $\mu = \sum_{i= 1}^m \lambda_{i} \delta_{a_i}$, this can be rewritten as
\begin{align*}
(P_{\mu, v})\qquad\qquad\maxi \quad & \E_{\mu}[W\cdot\lfp{v}^2] \quad  \\
\text{over all}\quad& W\in\polyspace{d-2},\: W \succeq 0 \text{ satisfying} \\
&\E_{\mu}[W] = 1\nonumber
\end{align*}
One problem is that the feasible space is in general not compact: Indeed, assume there is some sum of squares polynomial $W_0$ vanishing on every $a_i$. Then \[
\E_{\mu}[W_0] = \E_{\mu}[W_0\cdot\lfp{v}^2] = 0
\] 
Thus it could be possible to  traverse the feasible space  in a direction where the value of the objective function does not change. Doing so would of course yield an unnecessary increase of the optimisers ``complexity'' (complexity can be measured by an appropriate norm on $\polyspace{d-2}$). This can easily be prevented by adding a complexity  bound.\footnote{It is not clear whether such a complexity bound will affect the quality of the optimal solution. However, we are fine with an optimal solution that is ``at least as good'' as the testimonies.} The most basic approach in doing so would be to cap the  absolute values of the coefficients of  $W $ using linear constraints.  We will use a slightly different approach: With respect to any continuous measure $\nu $, $\E_{\nu}[W]$ will be strictly positive for any non-zero SOS polynomial $W$. Therefore we can take for instance  $\nu = \mathcal{U} $ as  the uniform probability measure on the set $B:= \conv \Sn  $ and cap the size of $\E_{\mathcal{U}}[W]$. We just have to verify that every testimony polynomial still lies in the feasible space. Hence, we should upper bound \[
\E_{\mathcal{U}}[\chebyinterval[(d-2)/2](\norm{X-a}^2)]
\]
for an arbitrary unit vector $a\in\Sn$ and any reasonable choice of the interval $I$. (Recall that for $\mu = \sum_{i= 1}^m \lambda_{i} \delta_{a_i}$ we used $I = [\min_{i\neq j} \norm{a_i-a_j}^2,\: \max_{i\neq j} \norm{a_i-a_j}^2] \subseteq (0, 4]$ if all $a_i\in \Sn$). W.l.o.g. we may assume $a = e_1$ since $\mathcal{U}$ is rotation-symmetric. $\chebyinterval[(d-2)/2]$ attains its maximum on $[0,4]$ on the points that have the largest distance to the midpoint of $\I$. Hence  $\max_{x\in [0,4]} \chebyinterval[(d-2)/2](x) \leq \max\{\chebyinterval[(d-2)/2](0), \chebyinterval[(d-2)/2](4)\} =: C_{\max}$.\\

But then we get the brute upper bound that for every $I \subseteq [0, 4]$ and $a\in \Sn$: \[
\E_{\mathcal{U}}[\chebyinterval[(d-2)/2](\norm{X-a}^2)] \leq C_{\max} \E_{\mathcal{U}}[1]  = C_{\max} 
\]
And therefore for $\wtest = \E_{\mu}[\chebyinterval[(d-2)/2](\norm{X-a_j}^2)+1]^{-1} \left(\chebyinterval[(d-2)/2](\norm{X-a_j}^2) + 1\right)$ we have:
\[
\E_{\mathcal{U}}[\wtest] \leq\frac{ C_{\max} + 1}{ \lambda_{\min} }  
\]
due to $$\E_{\mu}[\chebyinterval[(d-2)/2](\norm{X-a_j}^2) +1 ] \geq \lambda_{\min}\delta_{a_j}[\chebyinterval[(d-2)/2](\norm{X-a_j}^2) + 1] = \lambda_{\min} (\chebyinterval[(d-2)/2](0) + 1)\geq \lambda_{\min}$$
The last estimate holds since the Chebyshev polynomial is nonnegative outside of $\I$.\\

With this bound we get a new \emph{complexity-truncated} SOS optimisation programme
\begin{align*}
(P_{\mu, v, C})\qquad\qquad\maxi \quad & \E_{\mu}[W\cdot\lfp{v}^2] \quad  \\
\text{over all}\quad& W\in\polyspace{d-2},\: W \succeq 0 \text{ satisfying} \\
&\E_{\mu}[W] = 1\nonumber\\
&\E_{\mathcal{U}}[W] \leq\frac{ C_{\max} + 1}{ \lambda_{\min} }  
\end{align*}
By adding the new constraint, the feasible space gets compact with respect to the norm $\norm[\mathcal{U}]{W} := \E_{\mathcal{U}}[|W|]$ on $\polyspace{d-2}$ and therefore there exists an optimiser $\Wst$ of $(P_{\mu, v, C})$.

\end{document}